\def\del{\delta}
\numberwithin{equation}{section}
\theoremstyle{plain}
\newtheorem{thm}{Theorem}[section]
\newtheorem{lem}[thm]{Lemma}
\newtheorem{prop}[thm]{Proposition}
\newtheorem{defn}[thm]{Definition}
\newtheorem{exm}[thm]{Example}
\theoremstyle{remark}
\newtheorem{rem}[thm]{Remark}
\renewcommand{\mod}{\operatorname{mod}\nolimits}
\newcommand{\add}{\operatorname{add}\nolimits}
\newcommand{\Fac}{\operatorname{Fac}\nolimits}
\newcommand{\Hom}{\operatorname{Hom}\nolimits}
\newcommand{\End}{\operatorname{End}\nolimits}
\newcommand{\Ext}{\operatorname{Ext}\nolimits}
\newcommand{\pd}{\operatorname{pd}\nolimits}
\newcommand{\Cone}{\operatorname{Cone}\nolimits}
\newcommand{\CoCone}{\operatorname{CoCone}\nolimits}
\newcommand{\B}{\mathcal B}
\newcommand{\uB}{\underline{\B}}
\newcommand{\oB}{\overline{\B}}
\newcommand{\h}{\mathcal H}
\newcommand{\I}{\mathcal I}
\newcommand{\R}{\mathcal R}
\newcommand{\C}{\mathcal C}
\newcommand{\EE}{\mathbb E}
\newcommand{\svecv}[2]{\left(\begin{smallmatrix}
      #1 \\
      #2
    \end{smallmatrix}\right)}
\newcommand{\svech}[2]{\left(\begin{smallmatrix}
      #1 & #2
\end{smallmatrix}\right)}
\def\Ab{\mathsf{Ab}}
\renewcommand{\emph}{\textit}
\renewcommand{\phi}{\varphi}
\newcommand{\pr}{\mathsf{pr}\hspace{.01in}}
\begin{document}

\title{Relative rigid objects in extriangulated categories}\footnote{
The first author is supported by the Fundamental Research Funds for the Central Universities (Grants No.2682019CX51). The second author is supported by the Hunan Provincial Natural Science Foundation of China (Grants No.2018JJ3205) and the NSF of China (Grants No.11671221).}
\author{Yu Liu and Panyue Zhou}
\address{School of Mathematics, Southwest Jiaotong University, 610031, Chengdu, Sichuan, People's Republic of China}
\email{liuyu86@swjtu.edu.cn}
\address{College of Mathematics, Hunan Institute of Science and Technology, 414006, Yueyang, Hunan, People's Republic of China}
\email{panyuezhou@163.com}
\thanks{The authors wish to thank Professor Bin Zhu for his helpful advices.}
\begin{abstract}
In this paper, we study a close relationship between relative cluster tilting theory in extriangulated categories and $\tau$-tilting
theory in module categories. Our main results show that relative rigid objects are in bijection with $\tau$-rigid pairs, and also relative maximal rigid objects with
support $\tau$-tilting pairs under some assumptions.
These results generalize their work by Adachi-Iyama-Reiten, Yang-Zhu and Fu-Geng-Liu.
Finally, we introduce mutation of relative maximal rigid objects and show that any basic relative
almost maximal rigid object has exactly two non-isomorphic indecomposable complements. 
\end{abstract}
\keywords{extriangulated category; relative maximal rigid object; support $\tau$-tilting module; mutation}
\subjclass[2010]{18E30; 18E10; 16G90}
\maketitle

\section{Introduction}

In \cite{AIR}, Adachi, Iyama and Reiten introduced a generalization of classical tilting theory, which is called $\tau$-tilting theory.
They proved that for a $2$-Calabi-Yau triangulated category $\C$ with a cluster tilting object $T$, there exists a
 bijection between the basic cluster tilting objects in $\C$ and the basic support $\tau$-tilting modules in $\mod\End_\C(T)^{\textrm{op}}$.
Note that any cluster tilting object is maximal rigid in triangulated category, but
the converse is not true in general. Chang-Zhang-Zhu \cite{CZZ} and Liu-Xie \cite{LX}
showed that the similar version of the above bijection is also valid for a $2$-Calabi-Yau triangulated category with a rigid object. When the triangulated category $\C$ is not $2$-Calabi-Yau, but with a cluster tilting object $T$, the Adachi-Iyama-Reiten's bijection does not exist, see \cite[Example 2.15]{YZ}.
It is then reasonable to find a class of objects in $\C$ which correspond to support $\tau$-tilting modules in $\mod\End_\C(T)^{\textrm{op}}$ bijectively in a more general setting. For this purpose, Yang and Zhu \cite{YZ} introduced the notion of relative cluster tilting objects in a triangulated category $\C$ with a cluster tilting object, which is a generalization of cluster tilting objects. Let $\C$ be a triangulated category with the shift functor $[1]$ and a cluster tilting object $T$.
They introduced the notion of $T[1]$-cluster tilting objects and established a one-to-one correspondence between the basic $T[1]$-cluster tilting objects of $\C$ and the basic support $\tau$-tilting modules in $\mod\End_\C(T)^{\textrm{op}}$.
This bijection was generalized  by Fu, Geng and Liu \cite{FGL} recently to a triangulated category $\C$ with a rigid object. Let $R\in \C$ be a rigid object with endomorphism algebra $\Gamma$. They introduced the notion of the $R[1]$-rigid objects in the finitely presented subcategory $\pr R $ of $\C$ and showed that there exists a bijection between the set of basic $R[1]$-rigid objects in $\pr R$ and the set of basic $\tau$-rigid pairs of $\Gamma$-modules, which induces a one-to-one correspondence between the set of basic maximal $R[1]$-rigid objects with respect to $\pr R$ and the set of basic support $\tau$-tilting $\Gamma$-modules.

Recently, the notion of an extriangulated category was introduced by Nakaoka and Palu in \cite[Definition 2.12]{NP}, which is a simultaneous generalization of exact category and triangulated category. For extriangulated
categories which are neither exact categories nor triangulated categories, see \cite[Proposition 3.30]{NP} and \cite[Example 4.14]{ZZ}. A natural question is do such bijections exist when we consider an extriangulated category instead of a triangulated category.
Motivated by this, we study the similar problems in \cite{FGL} and \cite{YZ} in an extriangulated category.

In this paper, let $k$ be a field and $\B$ be a Krull-Schmidt, Hom-finite, $k$-linear extriangulated category with enough projectives $\mathcal P$ and enough injectives $\mathcal I$, and $R$ a basic rigid object
of $\B$ which does not have any projective direct summand. We denote by $\R=(\add R) \vee \mathcal P$ the smallest subcategory of $\B$ containing all the direct sums of objects in $\add R$ and $\mathcal P$. We will introduce the relative rigid objects (for convenience, we also call them $\R$-rigid objects, see Definition \ref{d1}) and
relative maximal rigid objects (for convenience, we also call them maximal $\R$-rigid objects, see Definition \ref{d2}) in $\B$. 

Let $R=\bigoplus\limits^n_{i=1}R_i$ where $R_i$ is indecomposable. Let $\Omega R_i$ be the object induced by the following $\EE$-triangle $$\xymatrix{\Omega R_i \ar[r] &P_i \ar[r]^{p_i} &R_i\ar@{-->}[r] &}$$
where $P_i\in \mathcal P$ and $p_i$ is a right minimal, we denote $\Omega R:=\bigoplus\limits^n_{i=1}\Omega R_i$ and get the following functor $$\Hom_{\B/\mathcal P}(\Omega R,-):\B\to \mod\Gamma, ~{\rm where}~ \Gamma:=\End_{\B/\mathcal P}(\Omega R)$$
Let $\h=\{ \text{ } X\in \B \text{ }|\text{ } \textrm{there exists an} \text{ } \EE\text{-triangle } \xymatrix@C=0.8cm@R0.6cm{ X \ar[r] &R' \ar[r] &R'' \ar@{-->}[r] &} \text{, }R',R''\in \add R \text{ } \}$, we show the following (see Theorem \ref{main}):
\begin{thm}
\begin{itemize}
\item[(a)] Let $X\in \h$. Then $X$ is $\R$-rigid if and only if $\Hom_{\B/\mathcal P}(\Omega R,X)$ is $\tau$-rigid.
\item[(b)] $\Hom_{\B/\mathcal P}(\Omega R,-)$ yields a bijection between the set of isomorphism classes of basic $\R$-rigid objects in $\h$ which have no direct summands in $\mathcal P$ and the set of isomorphism classes of basic $\tau$-rigid pairs of $\Gamma$-modules.
\item[(c)] $\Hom_{\B/\mathcal P}(\Omega R,-)$ yields a bijection between the set of isomorphism classes of basic maximal $\R$-rigid objects in $\h$ which have no direct summands in $\mathcal P$ and the set of isomorphism classes of basic support $\tau$-tilting pairs of $\Gamma$-modules.
\end{itemize}
\end{thm}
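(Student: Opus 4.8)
The whole argument is organized around the functor $F:=\Hom_{\B/\mathcal P}(\Omega R,-)\colon\B\to\mod\Gamma$, so the first step is to record its behaviour. Since $R$ is rigid and each $P_i$ is projective, applying $\Hom_\B(-,R_i)$ to the defining $\EE$-triangle $\Omega R_j\to P_j\to R_j\dashrightarrow$ and using $\EE(R_j,R_i)=0$ shows that every morphism $\Omega R_j\to R_i$ factors through $P_j$; hence $F$ kills $\add R$, it obviously kills $\mathcal P$, and $F(\Omega R_i)=\Gamma e_i$ is the $i$-th indecomposable projective. Moreover, for $X\in\h$ with an $\EE$-triangle $X\to R'\to R''\dashrightarrow$ ($R',R''\in\add R$), pulling this back successively along the defining $\EE$-triangles of $\Omega R''$ and $\Omega R'$ (using projectivity to split off $P''$ and $P'$) produces $\EE$-triangles
\[\Omega R''\to X\oplus P''\to R'\dashrightarrow\quad\text{and}\quad\Omega R'\to\Omega R''\oplus P'\to X\oplus P''\dashrightarrow\]
with $P',P''\in\mathcal P$; applying $F$ and discarding the summands it kills yields a projective presentation $F(\Omega R')\to F(\Omega R'')\to FX\to 0$, minimal when the chosen data are minimal. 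All of this, together with the statement that $F$ restricted to $\h$ induces an equivalence onto $\mod\Gamma$ modulo the ideal of morphisms factoring through $\R=(\add R)\vee\mathcal P$, is exactly what the lemmas preceding the theorem provide, and I shall quote them freely.

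With this in place, the core of part (a) is an Auslander--Reiten type formula: for $X,Y\in\h$ there is a natural isomorphism
\[\Hom_\Gamma\bigl(FY,\tau FX\bigr)\ \cong\ D\,\EE_{\R}(X,Y),\]
where $\EE_\R\subseteq\EE$ is the relative sub-bifunctor attached to $\R$ whose vanishing on the diagonal is, by Definition \ref{d1}, precisely $\R$-rigidity, and $D$ is the $k$-dual. I would prove it by computing $\tau FX$ as the kernel of the transpose of the map $F(\Omega R')\to F(\Omega R'')$ above, rewriting $\Hom_\Gamma(FY,-)$ of that kernel in terms of morphisms in $\B/\mathcal P$ via Yoneda for the projectives $F(\Omega R_i)=\Gamma e_i$, and then identifying the resulting cokernel of a $\Hom$-map with $\EE_\R(X,Y)$ by playing it off against the long exact sequence attached to the $\EE$-triangle witnessing $Y\in\h$. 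Setting $Y=X$ gives: $X$ is $\R$-rigid $\iff\EE_\R(X,X)=0\iff\Hom_\Gamma(FX,\tau FX)=0\iff FX$ is $\tau$-rigid. This formula, and in particular keeping the \emph{relative} structure (rather than the absolute $\EE$, or the absolute $\Ext^1_\Gamma$ of $\Gamma$-modules) correctly in place while passing through $\B/\mathcal P$, is the main obstacle; once it is available, parts (b) and (c) are essentially formal.

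For part (b) the correspondence sends $X$ to the pair $(FX,Q_X)$, where $Q_X=\bigoplus_{i\in S}\Gamma e_i$ and $S$ is the set of $i$ with $R_i$ a direct summand of $X$ (equivalently, the indecomposable summands of $X$ on which $F$ vanishes). Well-definedness is part (a) together with the observation that $\Hom_\Gamma(Q_X,FX)=0$, which via the $\EE$-triangles of the first paragraph is a $\Hom$-vanishing forced by $\R$-rigidity of $X$; so $(FX,Q_X)$ is a $\tau$-rigid pair. For injectivity one reconstructs $X$, up to summands in $\mathcal P$ (which the statement excludes), from $(FX,Q_X)$: lift a minimal projective presentation of $FX$ to a morphism $\Omega R'\to\Omega R''$ in $\B/\mathcal P$, lift it to $\B$, extend it to a morphism of the defining $\EE$-triangles of $\Omega R'$ and $\Omega R''$, and read off the cocone of the induced map between the $\add R$-terms, which lies in $\h$ and is the core of $X$; then re-attach $\bigoplus_{i\in S}R_i$. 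Faithfulness of $F$ on $\h$ modulo $\R$ makes this inverse to $F$ on cores. Surjectivity is the same construction read forwards: a $\tau$-rigid pair $(M,Q)$ has a minimal presentation between projectives which lifts to an $\EE$-triangle producing $X_M\in\h$ with $FX_M=M$; then $X:=X_M\oplus\bigoplus_{i\in\mathrm{supp}\,Q}R_i$ lies in $\h$, has no $\mathcal P$-summand, satisfies $(FX,Q_X)=(M,Q)$, and is $\R$-rigid because $\EE_\R(X_M,X_M)=0$ by (a), $\EE_\R(R_i,R_j)\subseteq\EE(R_i,R_j)=0$ by rigidity of $R$, and the two remaining cross terms vanish, one automatically since $FR_i=0$ and the other precisely because $\Hom_\Gamma(Q,M)=0$, again through the formula of (a). Checking that the two composites are identities is then routine.

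Finally, part (c) follows by transporting maximality across the bijection of (b). On the module side a $\tau$-rigid pair is a support $\tau$-tilting pair exactly when it is maximal among $\tau$-rigid pairs, equivalently when $|M|+|Q|=n$, and every $\tau$-rigid pair is a direct summand of a support $\tau$-tilting one \cite{AIR}. Since $F$ is additive and, by (b), a bijection that reflects the ``is a direct summand of'' relation on basic objects, a basic $\R$-rigid $X\in\h$ without $\mathcal P$-summands is maximal in the sense of Definition \ref{d2} if and only if $(FX,Q_X)$ is maximal among $\tau$-rigid pairs, i.e.\ a support $\tau$-tilting pair. The one point to be careful about is that the Bongartz enlargement on the module side must pull back to an enlargement still lying in $\h$ and still free of $\mathcal P$-summands; this is checked by running the lifting construction of (b) on the completed presentation. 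Hence $F$ restricts to the stated bijection between basic maximal $\R$-rigid objects in $\h$ without $\mathcal P$-summands and basic support $\tau$-tilting pairs of $\Gamma$-modules, which is (c).
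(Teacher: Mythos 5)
Your treatment of parts (b) and (c) is essentially the paper's: the correspondence $X=X_0\oplus R_0\mapsto(FX_0,F(\Omega R_0))$, the use of the equivalence $\overline{\h}\simeq\mod\Gamma$ to reconstruct $X$ from a minimal presentation, the vanishing $\Hom_\Gamma(Q,M)=0\Leftrightarrow\EE(R_0,X_0)=0$ (the paper's Lemma \ref{sum}), and the transport of maximality using the AIR characterization of support $\tau$-tilting pairs all match the published argument. The divergence, and the gap, is in part (a). The paper proves (a) with no Auslander--Reiten formula at all: its Lemma \ref{rigid} shows by an explicit diagram chase --- linking the syzygy-level map $f\colon\Omega R_1\to\Omega R_2$ obtained from the $\EE$-triangle $X\to R_1\to R_2\dashrightarrow$ to the factorization condition of Definition \ref{d1} through the injective copresentation $X\to I_X\to\Sigma X$ --- that $X$ is $\R$-rigid if and only if $\Hom_{\uB}(\underline f,X)$ is surjective; combining this with the AIR surjectivity criterion (Lemma \ref{AIR}) and the full faithfulness $\Hom_{\oB}(X,Y)\simeq\Hom_\Gamma(FX,FY)$ (Lemma \ref{eq}) finishes (a) in a few lines.

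You instead posit the formula $\Hom_\Gamma(FY,\tau FX)\cong D\,\EE_\R(X,Y)$ and correctly identify it as ``the main obstacle,'' but you do not overcome it: the proof is described only in the conditional (``I would prove it by\ldots''), and its decisive step --- ``identifying the resulting cokernel of a $\Hom$-map with $\EE_\R(X,Y)$'' --- is precisely the content of Lemma \ref{rigid}, which requires the concrete construction of the two interlocking $3\times 3$ diagrams and the back-and-forth between maps $\Omega R_1\to X$ and maps $R_1\to\Sigma X$; nothing in your sketch performs or replaces that chase. Two further points would need attention even granting the strategy: $\EE_\R$ is never actually defined as a bifunctor (Definition \ref{d1} only defines the subset $\overline{[R]}(M,\Sigma N)$ relative to a chosen injective copresentation, so well-definedness and functoriality in both variables must be verified before a natural isomorphism can be stated), and the duality coming from AIR's Proposition 2.4 naturally computes the injectively stable $\overline{\Hom}_\Gamma(FY,\tau FX)$ rather than $\Hom_\Gamma(FY,\tau FX)$, so the formula as written is not quite the one that the standard argument yields (harmless for the application $Y=X$, but it should be stated correctly). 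Since Lemma \ref{rigid} precedes the theorem, the cleanest repair is simply to invoke it in place of the unproved formula; as it stands, part (a) of your proposal is incomplete.
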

Since tilting modules are faithful support $\tau$-tilting modules, using the
correspondence with relative rigid objects, we give an equivalent characterization on tilting modules, see Theorem \ref{main1}.
We also introduce mutation of relative maximal rigid objects and show that any basic relative almost
maximal rigid object has exactly two non-isomorphic indecomposable complements, see Theorem \ref{main2}.
Finally, we give an example illustrating our these results, see Example \ref{ex1}.

\section{Preliminaries}
\subsection{Extriangulated categories}
Let us briefly recall the definition and basic properties of extriangulated categories from \cite{NP}. Throughout this paper, we assume that $\B$ is an additive category.

\begin{defn}\cite[Definition 2.1]{NP}
Suppose that $\B$ is equipped with an additive bifunctor $$\mathbb{E}\colon\B^\mathrm{op}\times\B\to\Ab,$$ where $\Ab$ is the category of abelian groups. For any pair of objects $A,C\in\B$, an element $\delta\in\mathbb{E}(C,A)$ is called an {\it $\mathbb{E}$-extension}. Thus formally, an $\EE$-extension is a triplet $(A,\delta,C)$.
For any $A,C\in\C$, the zero element $0\in\EE(C,A)$ is called the \emph{spilt $\EE$-extension}.

Let $\delta\in\mathbb{E}(C,A)$ be any $\mathbb{E}$-extension. By the functoriality, for any $a\in\B(A,A^{\prime})$ and $c\in\B(C^{\prime},C)$, we have $\mathbb{E}$-extensions
\[ \mathbb{E}(C,a)(\delta)\in\mathbb{E}(C,A^{\prime})\ \ \text{and}\ \ \mathbb{E}(c,A)(\delta)\in\mathbb{E}(C^{\prime},A). \]
We denote them by $a_{\ast}\delta$ and $c^{\ast}\delta$.
In this terminology, we have
\[ \mathbb{E}(c,a)(\delta)=c^{\ast} a_{\ast}\delta=a_{\ast} c^{\ast}\delta \]
in $\mathbb{E}(C^{\prime},A^{\prime})$.
\end{defn}

\begin{defn}\cite[Definition 2.3]{NP}
Let $\delta\in\mathbb{E}(C,A)$ and $\delta^{\prime}\in\mathbb{E}(C^{\prime},A^{\prime})$ be two pair of $\mathbb{E}$-extensions. A {\it morphism} $(a,c)\colon\delta\to\delta^{\prime}$ of $\mathbb{E}$-extensions is a pair of morphisms $a\in\B(A,A^{\prime})$ and $c\in\B(C,C^{\prime})$ in $\B$, satisfying the equality
\[ a_{\ast}\delta=c^{\ast}\delta^{\prime}. \]
We simply denote it as $(a,c)\colon\delta\to\delta^{\prime}$.
\end{defn}

\begin{defn}\cite[Definition 2.6]{NP}
Let $\delta=(A,\delta,C)$ and $\delta^{\prime}=(A^{\prime},\delta^{\prime},C^{\prime})$ be any pair of $\mathbb{E}$-extensions. Let
\[ C\xrightarrow{~\iota_C~}C\oplus C^{\prime}\xleftarrow{~\iota_{C^{\prime}}~}C^{\prime} \]
and
\[ A\xrightarrow{~p_A~}A\oplus A^{\prime}\xleftarrow{~p_{A^{\prime}}~}A^{\prime} \]
be coproduct and product in $\B$, respectively. Remark that, by the additivity of $\mathbb{E}$, we have a natural isomorphism
\[ \mathbb{E}(C\oplus C^{\prime},A\oplus A^{\prime})\simeq \mathbb{E}(C,A)\oplus\mathbb{E}(C,A^{\prime})\oplus\mathbb{E}(C^{\prime},A)\oplus\mathbb{E}(C^{\prime},A^{\prime}). \]

Let $\delta\oplus\delta^{\prime}\in\mathbb{E}(C\oplus C^{\prime},A\oplus A^{\prime})$ be the element corresponding to $(\delta,0,0,\delta^{\prime})$ through this isomorphism. This is the unique element which satisfies
$$
\mathbb{E}(\iota_C,p_A)(\delta\oplus\delta^{\prime})=\delta,\ \mathbb{E}(\iota_C,p_{A^{\prime}})(\delta\oplus\delta^{\prime})=0,\
\mathbb{E}(\iota_{C^{\prime}},p_A)(\delta\oplus\delta^{\prime})=0,\ \mathbb{E}(\iota_{C^{\prime}},p_{A^{\prime}})(\delta\oplus\delta^{\prime})=\delta^{\prime}.
$$
\end{defn}

\begin{defn}\cite[Definition 2.7]{NP}
Let $A,C\in\B$ be any pair of objects. Two sequences of morphisms in $\B$
\[ A\overset{x}{\longrightarrow}B\overset{y}{\longrightarrow}C\ \ \text{and}\ \ A\overset{x^{\prime}}{\longrightarrow}B^{\prime}\overset{y^{\prime}}{\longrightarrow}C \]
are said to be {\it equivalent} if there exists an isomorphism $b\in\B(B,B^{\prime})$ which makes the following diagram commutative.
\[
\xy
(-16,0)*+{A}="0";
(3,0)*+{}="1";
(0,8)*+{B}="2";
(0,-8)*+{B^{\prime}}="4";
(-3,0)*+{}="5";
(16,0)*+{C}="6";
{\ar^{x} "0";"2"};
{\ar^{y} "2";"6"};
{\ar_{x^{\prime}} "0";"4"};
{\ar_{y^{\prime}} "4";"6"};
{\ar^{b}_{\simeq} "2";"4"};
{\ar@{}|{} "0";"1"};
{\ar@{}|{} "5";"6"};
\endxy
\]

We denote the equivalence class of $A\overset{x}{\longrightarrow}B\overset{y}{\longrightarrow}C$ by $[A\overset{x}{\longrightarrow}B\overset{y}{\longrightarrow}C]$.
\end{defn}

\begin{defn}
$\ \ $
\begin{enumerate}
\item[\rm{(1)}] For any $A,C\in\B$, we denote as
\[ 0=[A\overset{\Big[\raise1ex\hbox{\leavevmode\vtop{\baselineskip-8ex \lineskip1ex \ialign{#\crcr{$\scriptstyle{1}$}\crcr{$\scriptstyle{0}$}\crcr}}}\Big]}{\longrightarrow}A\oplus C\overset{[0\ 1]}{\longrightarrow}C]. \]

\item[\rm{(2)}] For any $[A\overset{x}{\longrightarrow}B\overset{y}{\longrightarrow}C]$ and $[A^{\prime}\overset{x^{\prime}}{\longrightarrow}B^{\prime}\overset{y^{\prime}}{\longrightarrow}C^{\prime}]$, we denote as
\[ [A\overset{x}{\longrightarrow}B\overset{y}{\longrightarrow}C]\oplus [A^{\prime}\overset{x^{\prime}}{\longrightarrow}B^{\prime}\overset{y^{\prime}}{\longrightarrow}C^{\prime}]=[A\oplus A^{\prime}\overset{x\oplus x^{\prime}}{\longrightarrow}B\oplus B^{\prime}\overset{y\oplus y^{\prime}}{\longrightarrow}C\oplus C^{\prime}]. \]
\end{enumerate}
\end{defn}

\begin{defn}\cite[Definition 2.9]{NP}
Let $\mathfrak{s}$ be a correspondence which associates an equivalence class $\mathfrak{s}(\delta)=[A\overset{x}{\longrightarrow}B\overset{y}{\longrightarrow}C]$ to any $\mathbb{E}$-extension $\delta\in\mathbb{E}(C,A)$. This $\mathfrak{s}$ is called a {\it realization} of $\mathbb{E}$, if it satisfies the following condition.
\begin{itemize}
\item Let $\delta\in\mathbb{E}(C,A)$ and $\delta^{\prime}\in\mathbb{E}(C^{\prime},A^{\prime})$ be any pair of $\mathbb{E}$-extensions, with
\[\mathfrak{s}(\delta)=[A\overset{x}{\longrightarrow}B\overset{y}{\longrightarrow}C]\text{ and } \mathfrak{s}(\delta^{\prime})=[A^{\prime}\overset{x^{\prime}}{\longrightarrow}B^{\prime}\overset{y^{\prime}}{\longrightarrow}C^{\prime}].\]
Then, for any morphism $(a,c)\colon\delta\to\delta^{\prime}$, there exists $b\in\B(B,B^{\prime})$ which makes the following diagram commutative.
$$
\xy
(-12,6)*+{A}="0";
(0,6)*+{B}="2";
(12,6)*+{C}="4";
(-12,-6)*+{A^{\prime}}="10";
(0,-6)*+{B^{\prime}}="12";
(12,-6)*+{C^{\prime}}="14";
{\ar^{x} "0";"2"};
{\ar^{y} "2";"4"};
{\ar_{a} "0";"10"};
{\ar^{b} "2";"12"};
{\ar^{c} "4";"14"};
{\ar^{x^{\prime}} "10";"12"};
{\ar^{y^{\prime}} "12";"14"};
{\ar@{}|{} "0";"12"};
{\ar@{}|{} "2";"14"};
\endxy
$$
\end{itemize}
In this case, we say that the sequence $A\overset{x}{\longrightarrow}B\overset{y}{\longrightarrow}C$ {\it realizes} $\delta$, whenever it satisfies $\mathfrak{s}(\delta)=[A\overset{x}{\longrightarrow}B\overset{y}{\longrightarrow}C]$.
In the above situation, we also say that the triplet $(a,b,c)$ {\it realizes} $(a,c)$.
\end{defn}

\begin{defn}\cite[Definition 2.10]{NP}
Let $\B,\mathbb{E}$ be as above. A realization of $\mathbb{E}$ is said to be {\it additive}, if it satisfies the following conditions.
\begin{itemize}
\item[{\rm (i)}] For any $A,C\in\B$, the split $\mathbb{E}$-extension $0\in\mathbb{E}(C,A)$ satisfies
\[ \mathfrak{s}(0)=0. \]
\item[{\rm (ii)}] For any pair of $\mathbb{E}$-extensions $\delta\in\mathbb{E}(C,A)$ and $\delta^{\prime}\in\mathbb{E}(C^{\prime},A^{\prime})$, we have
\[ \mathfrak{s}(\delta\oplus\delta^{\prime})=\mathfrak{s}(\delta)\oplus\mathfrak{s}(\delta^{\prime}). \]
\end{itemize}
\end{defn}

\begin{defn}\cite[Definition 2.12]{NP}
A triplet $(\B,\mathbb{E},\mathfrak{s})$ is called an {\it extriangulated category} if it satisfies the following conditions.
\begin{itemize}
\item[{\rm (ET1)}] $\mathbb{E}\colon\B^{\mathrm{op}}\times\B\to\Ab$ is an additive bifunctor.
\item[{\rm (ET2)}] $\mathfrak{s}$ is an additive realization of $\mathbb{E}$.
\item[{\rm (ET3)}] Let $\delta\in\mathbb{E}(C,A)$ and $\delta^{\prime}\in\mathbb{E}(C^{\prime},A^{\prime})$ be any pair of $\mathbb{E}$-extensions, realized as
\[ \mathfrak{s}(\delta)=[A\overset{x}{\longrightarrow}B\overset{y}{\longrightarrow}C],\ \ \mathfrak{s}(\delta^{\prime})=[A^{\prime}\overset{x^{\prime}}{\longrightarrow}B^{\prime}\overset{y^{\prime}}{\longrightarrow}C^{\prime}]. \]
For any commutative square
$$
\xy
(-12,6)*+{A}="0";
(0,6)*+{B}="2";
(12,6)*+{C}="4";
(-12,-6)*+{A^{\prime}}="10";
(0,-6)*+{B^{\prime}}="12";
(12,-6)*+{C^{\prime}}="14";
{\ar^{x} "0";"2"};
{\ar^{y} "2";"4"};
{\ar_{a} "0";"10"};
{\ar^{b} "2";"12"};
{\ar^{x^{\prime}} "10";"12"};
{\ar^{y^{\prime}} "12";"14"};
{\ar@{}|{} "0";"12"};
\endxy
$$
in $\B$, there exists a morphism $(a,c)\colon\delta\to\delta^{\prime}$ satisfying $cy=y^{\prime}b$.
\item[{\rm (ET3)$^{\mathrm{op}}$}] Dual of {\rm (ET3)}.
\item[{\rm (ET4)}] Let $\delta\in\mathbb{E}(D,A)$ and $\delta^{\prime}\in\mathbb{E}(F,B)$ be $\mathbb{E}$-extensions realized by
\[ A\overset{f}{\longrightarrow}B\overset{f^{\prime}}{\longrightarrow}D\ \ \text{and}\ \ B\overset{g}{\longrightarrow}C\overset{g^{\prime}}{\longrightarrow}F \]
respectively. Then there exist an object $E\in\B$, a commutative diagram
$$
\xy
(-21,7)*+{A}="0";
(-7,7)*+{B}="2";
(7,7)*+{D}="4";
(-21,-7)*+{A}="10";
(-7,-7)*+{C}="12";
(7,-7)*+{E}="14";
(-7,-21)*+{F}="22";
(7,-21)*+{F}="24";
{\ar^{f} "0";"2"};
{\ar^{f^{\prime}} "2";"4"};
{\ar@{=} "0";"10"};
{\ar_{g} "2";"12"};
{\ar^{d} "4";"14"};
{\ar^{h} "10";"12"};
{\ar^{h^{\prime}} "12";"14"};
{\ar_{g^{\prime}} "12";"22"};
{\ar^{e} "14";"24"};
{\ar@{=} "22";"24"};
{\ar@{}|{} "0";"12"};
{\ar@{}|{} "2";"14"};
{\ar@{}|{} "12";"24"};
\endxy
$$
in $\B$, and an $\mathbb{E}$-extension $\delta^{\prime\prime}\in\mathbb{E}(E,A)$ realized by $A\overset{h}{\longrightarrow}C\overset{h^{\prime}}{\longrightarrow}E$, which satisfy the following compatibilities.
\begin{itemize}
\item[{\rm (i)}] $D\overset{d}{\longrightarrow}E\overset{e}{\longrightarrow}F$ realizes $f^{\prime}_{\ast}\delta^{\prime}$,
\item[{\rm (ii)}] $d^{\ast}\delta^{\prime\prime}=\delta$,

\item[{\rm (iii)}] $f_{\ast}\delta^{\prime\prime}=e^{\ast}\delta^{\prime}$.
\end{itemize}

\item[{\rm (ET4)$^{\mathrm{op}}$}] Dual of {\rm (ET4)}.
\end{itemize}
\end{defn}

\begin{rem}
Note that both exact categories and triangulated categories are extriangulated categories, see \cite[Example 2.13]{NP} and extension closed subcategories of extriangulated categories are
again extriangulated, see \cite[Remark 2.18]{NP}. Moreover, there exist extriangulated categories which
are neither exact categories nor triangulated categories, see \cite[Proposition 3.30]{NP} and \cite[Example 4.14]{ZZ}.
\end{rem}

We will use the following terminology.
\begin{defn}{\cite{NP}}
Let $(\B,\EE,\mathfrak{s})$ be an extriangulated category.
\begin{itemize}
\item[(1)] A sequence $A\xrightarrow{~x~}B\xrightarrow{~y~}C$ is called a {\it conflation} if it realizes some $\EE$-extension $\del\in\EE(C,A)$.\\
    In this case, $x$ is called an {\it inflation} and $y$ is called a {\it deflation}.

\item[(2)] If a conflation  $A\xrightarrow{~x~}B\xrightarrow{~y~}C$ realizes $\delta\in\mathbb{E}(C,A)$, we call the pair $( A\xrightarrow{~x~}B\xrightarrow{~y~}C,\delta)$ an {\it $\EE$-triangle}, and write it in the following way.
$$A\overset{x}{\longrightarrow}B\overset{y}{\longrightarrow}C\overset{\delta}{\dashrightarrow}$$
We usually do not write this $``\delta"$ if it is not used in the argument.
\item[(3)] Let $A\overset{x}{\longrightarrow}B\overset{y}{\longrightarrow}C\overset{\delta}{\dashrightarrow}$ and $A^{\prime}\overset{x^{\prime}}{\longrightarrow}B^{\prime}\overset{y^{\prime}}{\longrightarrow}C^{\prime}\overset{\delta^{\prime}}{\dashrightarrow}$ be any pair of $\EE$-triangles. If a triplet $(a,b,c)$ realizes $(a,c)\colon\delta\to\delta^{\prime}$, then we write it as
$$\xymatrix{
A \ar[r]^x \ar[d]^a & B\ar[r]^y \ar[d]^{b} & C\ar@{-->}[r]^{\del}\ar[d]^c&\\
A'\ar[r]^{x'} & B' \ar[r]^{y'} & C'\ar@{-->}[r]^{\del'} &}$$
and call $(a,b,c)$ a {\it morphism of $\EE$-triangles}.

\item[(4)] An object $P\in\B$ is called {\it projective} if
for any $\EE$-triangle $A\overset{x}{\longrightarrow}B\overset{y}{\longrightarrow}C\overset{\delta}{\dashrightarrow}$ and any morphism $c\in\B(P,C)$, there exists $b\in\B(P,B)$ satisfying $yb=c$.
We denote the subcategory of projective objects by $\mathcal P\subseteq\B$. Dually, the subcategory of injective objects is denoted by $\I\subseteq\B$.

\item[(5)] We say that $\B$ {\it has enough projective objects} if
for any object $C\in\B$, there exists an $\EE$-triangle
$A\overset{x}{\longrightarrow}P\overset{y}{\longrightarrow}C\overset{\delta}{\dashrightarrow}$
satisfying $P\in\mathcal P$. Dually we can define $\B$ {\it has enough injective objects}.
\end{itemize}
\end{defn}

By \cite[Corollary 3.5]{NP}, we give the following useful remark, which will be used in the sequel.

\begin{rem}\label{useful}
Let $\xymatrix{A\ar[r]^a &B \ar[r]^b &C \ar@{-->}[r] &}$ and $\xymatrix{X\ar[r]^x &Y \ar[r]^y &Z \ar@{-->}[r] &}$ be two $\EE$-triangles. Then
\begin{itemize}
\item In the following commutative diagram
$$\xymatrix{
X\ar[r]^x \ar[d]_f &Y \ar[d]^g \ar[r]^y &Z \ar[d]^h \ar@{-->}[r] &\\
A\ar[r]^a &B \ar[r]^b &C \ar@{-->}[r] &}
$$
$f$ factors through $x$ if and only if $h$ factors through $b$.


\end{itemize}
\end{rem}

\subsection{Rigid objects and cluster tilting objects}
Let $(\B,\mathbb{E},\mathfrak{s})$ be an extriangulated category with enough projectives $\mathcal P$ and enough injectives $\mathcal I$.

\begin{defn}
Let $\B'$ and $\B''$ be two subcategories of $\B$.
\begin{itemize}
\item[(a)] Denote by $\CoCone(\B',\B'')$ the subcategory
$$\{ \text{ } X\in \B \text{ }|\text{ } ~\textrm{there exists an}~ \text{ } \EE\text{-triangle } \xymatrix@C=0.8cm@R0.6cm{ X \ar[r] &B' \ar[r] &B'' \ar@{-->}[r] &} \text{, }B'\in \B' \text{ and }B''\in \B'' \text{ } \};$$
\item[(b)] Denote by $\Cone(\B',\B'')$ the subcategory
$$\{ \text{ } X\in \B \text{ }|\text{ } ~\textrm{there exists an}~ \text{ } \EE\text{-triangle } \xymatrix@C=0.8cm@R0.6cm{B' \ar[r] &B'' \ar[r] &X \ar@{-->}[r] &} \text{, }B'\in \B' \text{ and }B''\in \B'' \text{ } \};$$
\item[(c)] Let $\Omega^0 \B'=\B'$ and $\Omega \B'=\CoCone(\mathcal P,\B')$, then we can define $\Omega^i \B'$ inductively:
$$\Omega^i \B'=\CoCone(\mathcal P,\Omega^{i-1} \B'),$$
we can define a functor $\Omega: \B \to \B/\mathcal P$ according to the definition above;
\item[(d)] Let $\Sigma^0 \B'=\B'$, $\Sigma \B'=\Cone(\B',\mathcal I)$, then we can define $\Sigma^i \B'$ inductively:
$$\Sigma^i \B'=\Cone(\Sigma^{i-1} \B',\mathcal I),$$
we can define a functor $\Sigma: \B \to \B/\mathcal I$ according to the definition above.
\end{itemize}
We write an object $D$ in the form $\Omega B$ if it admits an $\EE$-triangle $\xymatrix@C=0.8cm@R0.6cm{D \ar[r] &P \ar[r] &B \ar@{-->}[r] &}$ where $P\in \mathcal P$. We write an object $D'$ in the form $\Sigma B'$ if it admits an $\EE$-triangle $\xymatrix@C=0.8cm@R0.6cm{B' \ar[r] &I \ar[r] &D' \ar@{-->}[r] & }$ where $I\in \mathcal I$.
\end{defn}

 Liu and Naokaoka \cite[Proposition 5.2]{LN} defined higher extension groups in an extriangulated category with enough projectives and enough injectives as $\EE^{i+1}(X,Y):=\EE(X,\Sigma^{i}Y)\cong \EE(\Omega^{i}X,Y)$ for $i\geq1$, and they proved the following.

\begin{lem}\cite[Proposition 5.2]{LN}
Let $\xymatrix{A\ar[r]^{x}&B\ar[r]^{y}&C\ar@{-->}[r]^{\delta}&}$ be an $\EE$-triangle. For any object $X\in\B$, there are the following long exact sequences
$$\cdots\rightarrow\EE^{i}(X, A)\xrightarrow{x_{*}}\EE^{i}(X, B)\xrightarrow{y_{*}}\EE^{i}(X, C)\rightarrow\EE^{i+1}(X, A)\xrightarrow{x_{*}}\EE^{i+1}(X, B)\xrightarrow{y_{*}}\cdots~~ (i\geq 1);$$
$$\cdots\rightarrow\EE^{i}(C, X)\xrightarrow{y^{*}}\EE^{i}(B, X)\xrightarrow{x^{*}}\EE^{i}(A, X)\rightarrow\EE^{i+1}(C, X)\xrightarrow{y^{*}}\EE^{i+1}(B, X)\xrightarrow{x^{*}}\cdots~~ (i\geq 1).$$
\end{lem}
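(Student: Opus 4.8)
The plan is to bootstrap from the fundamental (six-term) exact sequences attached to a single $\EE$-triangle, established in \cite{NP}, and to climb to higher degrees by dimension shifting. Recall that for the $\EE$-triangle $A\overset{x}{\longrightarrow}B\overset{y}{\longrightarrow}C\overset{\delta}{\dashrightarrow}$ and any object $Y$ one has the covariant sequence
$$\B(Y,A)\xrightarrow{x_{*}}\B(Y,B)\xrightarrow{y_{*}}\B(Y,C)\xrightarrow{\delta_{\sharp}}\EE(Y,A)\xrightarrow{x_{*}}\EE(Y,B)\xrightarrow{y_{*}}\EE(Y,C),$$
where $\delta_{\sharp}(g)=g^{*}\delta$, together with its contravariant analogue
$$\B(C,Y)\xrightarrow{y^{*}}\B(B,Y)\xrightarrow{x^{*}}\B(A,Y)\xrightarrow{\delta^{\sharp}}\EE(C,Y)\xrightarrow{y^{*}}\EE(B,Y)\xrightarrow{x^{*}}\EE(A,Y).$$
First I would record the identifications $\EE^{i}(X,-)\cong\EE(\Omega^{i-1}X,-)$ and $\EE^{i}(-,X)\cong\EE(-,\Sigma^{i-1}X)$ for $i\geq1$, which are immediate from the definition $\EE^{i+1}(X,Y)=\EE(X,\Sigma^{i}Y)\cong\EE(\Omega^{i}X,Y)$, noting that $\EE(P,-)=0$ for $P\in\mathcal P$ and $\EE(-,I)=0$ for $I\in\I$ force $\EE(-,-)$ to descend to $\B/\mathcal P$ and to $\B/\I$, so the objects $\Omega^{i-1}X$ and $\Sigma^{i-1}X$ are well defined up to the relevant equivalence.

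With these in hand the interior exactness is immediate. For the first sequence and each fixed $i\geq1$ I would apply the covariant six-term sequence with $Y=\Omega^{i-1}X$; its last three terms are exactly $\EE^{i}(X,A)\xrightarrow{x_{*}}\EE^{i}(X,B)\xrightarrow{y_{*}}\EE^{i}(X,C)$, which yields exactness at every middle term $\EE^{i}(X,B)$ and, via the incoming map $\delta_{\sharp}$, identifies $\Ker\big(x_{*}\colon\EE^{i}(X,A)\to\EE^{i}(X,B)\big)$ with $\Im\delta_{\sharp}$. The dual computation with $Y=\Sigma^{i-1}X$ handles the second sequence symmetrically.

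The substance is the connecting homomorphism and the two splice points. Fix the syzygy $\EE$-triangle $\Omega^{i}X\overset{a}{\longrightarrow}P\overset{b}{\longrightarrow}\Omega^{i-1}X\overset{\theta}{\dashrightarrow}$ with $P\in\mathcal P$. Feeding it into the contravariant six-term sequence with $Y=C$ and using $\EE(P,C)=0$ shows that $\theta^{\sharp}\colon\B(\Omega^{i}X,C)\to\EE(\Omega^{i-1}X,C)=\EE^{i}(X,C)$, $g\mapsto g_{*}\theta$, is surjective. Given $\xi\in\EE^{i}(X,C)$ I would pick $g$ with $\theta^{\sharp}(g)=\xi$ and set $\partial(\xi):=\delta_{\sharp}(g)=g^{*}\delta\in\EE(\Omega^{i}X,A)=\EE^{i+1}(X,A)$; independence of $g$ holds because two lifts differ by a morphism through $P$ and $\EE(P,A)=0$. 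Since $\theta^{\sharp}$ is onto, $\Im\partial=\Im\delta_{\sharp}=\Ker(x_{*})$, giving exactness at $\EE^{i+1}(X,A)$; and exactness at $\EE^{i}(X,C)$ follows from the naturality relation $\theta^{\sharp}(yg')=y_{*}\theta^{\sharp}(g')$ together with $y^{*}\delta=0$ and the covariant six-term description of $\Ker\delta_{\sharp}$. The second long exact sequence is obtained dually, replacing the syzygy triangle and projectives by the cosyzygy triangle $\Sigma^{i-1}X\to I\to\Sigma^{i}X$ with $I\in\I$ and the vanishing $\EE(-,I)=0$.

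I expect the main obstacle to be precisely this gluing step. Because an extriangulated category carries no shift functor, one cannot rotate $\EE$-triangles to read off the higher sequence at a stroke; instead the connecting map must be manufactured by lifting through a syzygy and transporting along $\delta$, and its well-definedness together with exactness at both junctions must be verified by hand from the two overlapping six-term sequences. The care lies in tracking which maps are $\delta_{\sharp}$, $\theta^{\sharp}$ or induced by $a,b$, and in invoking the realization identities $x_{*}\delta=0=y^{*}\delta$ and the projective/injective vanishing at exactly the right places.
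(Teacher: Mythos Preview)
The paper does not prove this lemma; it is quoted from \cite[Proposition 5.2]{LN} and stated without argument as background, so there is no in-paper proof to compare against. Your dimension-shifting argument is correct and is the expected one: identify $\EE^{i}(X,-)\cong\EE(\Omega^{i-1}X,-)$, read off exactness at each $\EE^{i}(X,B)$ from the six-term sequence of \cite{NP} applied with $Y=\Omega^{i-1}X$, and splice consecutive degrees via the syzygy $\EE$-triangle $\Omega^{i}X\to P\to\Omega^{i-1}X\overset{\theta}{\dashrightarrow}$, using $\EE(P,-)=0$ to make $\theta^{\sharp}$ surjective and to kill the ambiguity in the lift. Your checks at the two splice points are right: $\Im\partial=\Im\delta_{\sharp}=\Ker x_{*}$ because $\theta^{\sharp}$ is onto, and $\Ker\partial=\Im y_{*}$ because $\Ker\delta_{\sharp}=\Im y_{*}$ on $\B(\Omega^{i}X,C)$ and $\theta^{\sharp}$ intertwines $y_{*}$ on morphisms with $y_{*}$ on extensions. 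This is presumably also how \cite{LN} proceeds.
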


\begin{defn}\cite[Definition 5.3]{LN}
Let $(\B,\EE,\mathfrak{s})$ be an extriangulated category with enough projectives and enough injectives.
\begin{itemize}
\item An object $R\in\B$ is called $d$-\emph{rigid} if $\EE^i(R,R)=0$, for any $i\in\{1,2,\cdots, d\}$.
\end{itemize}
\end{defn}

\begin{defn}\cite[Definition 2.7]{ZZ1}
Let $\B$ be an extriangulated category.
\begin{itemize}
\item An object $R\in\B$ is called \emph{rigid} if $\EE(R,R)=0$. In this case, rigid is identical with $1$-rigid.
\item A subcategory $\mathcal R$ of $\B$ is called rigid if $\EE(\mathcal R,\mathcal R)=0$.
\item An object $R\in\B$ is called if \emph{cluster-tilting} if it satisfies
$$\EE(R,M)=0\Leftrightarrow M\in \add R\Leftrightarrow\EE(M,R)=0.$$
\end{itemize}
\end{defn}

\section{Relative rigid objects and $\tau$-rigid pairs}

From this section, let $k$ be a field and $(\B,\mathbb{E},\mathfrak{s})$ be a Krull-Schmidt, Hom-finite, $k$-linear extriangulated category with enough projectives $\mathcal P$ and enough injectives $\mathcal I$.

From now on, we also assume $\B$ satisfies condition (WIC) (\cite[Condition 5.8]{NP}):

\begin{itemize}
\item If we have a deflation $h: A\xrightarrow{~f~} B\xrightarrow{~g~} C$, then $g$ is also a deflation.
\item If we have an inflation $h: A\xrightarrow{~f~} B\xrightarrow{~g~} C$, then $f$ is also an inflation.
\end{itemize}

Note that this condition automatically holds on triangulated categories and Krull-Schmidt exact categories.

By this condition, we can always get right minimal deflations and left minimal inflations.

\subsection{Relative rigid objects} Throughout this paper,
we assume that $R$ is a basic rigid object which does not have direct summands in $\mathcal P$ and $\EE(R,\mathcal P)=0$.  We denote by $\R=(\add R) \vee \mathcal P$ the smallest subcategory of $\B$ containing all the direct sums of objects in $\add R$ and $\mathcal P$. Then $\R$ is a contravariantly finite rigid subcategory of $\B$.

\begin{lem}\label{ind}
Any indecomposable object $R_0\in \add R$ admits an $\EE$-triangle $\xymatrix@C=0.8cm@R0.6cm{\Omega R_0 \ar[r]^{q_0} &P_0 \ar[r]^{p_0} &R_0 \ar@{-->}[r] &}$ where $p_0$ is a right minimal and $\Omega R_0$ is an indecomposable.
\end{lem}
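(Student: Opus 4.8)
The plan is to construct the $\EE$-triangle by taking a projective cover and then analyzing minimality and indecomposability. Since $\B$ has enough projectives, there is an $\EE$-triangle $\xymatrix@C=0.8cm@R0.6cm{D \ar[r]^{q} &P \ar[r]^{p} &R_0 \ar@{-->}[r] &}$ with $P\in\mathcal P$. Using condition (WIC) together with the Krull-Schmidt property (as remarked just before Lemma \ref{ind}, ``By this condition, we can always get right minimal deflations''), I may assume $p$ is right minimal after deleting a suitable direct summand of the form $\mathrm{id}$ on a projective summand; I would spell this out by decomposing $p$ via Krull-Schmidt into a minimal part plus a split part $\xymatrix{P'' \ar[r]^{=} & P''}$, and absorbing $P''$. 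Set $\Omega R_0 := D$ for this minimal choice; this is the object appearing in the statement.

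The remaining point is that $\Omega R_0$ is indecomposable. First I would argue $\Omega R_0$ has no nonzero projective direct summand: if $\Omega R_0 = N \oplus P_1$ with $P_1\in\mathcal P$ nonzero, then since $P_1$ is projective the inclusion $P_1 \hookrightarrow \Omega R_0 \xrightarrow{q} P$ is an inflation composed appropriately, and one can split $P_1$ off the $\EE$-triangle, contradicting right minimality of $p$ (the standard reduction: a projective summand of the first term of a projective presentation can be cancelled against a summand of the middle term, destroying minimality). Next, suppose $\Omega R_0 = X \oplus Y$ with both nonzero and non-projective. By the additivity of the realization (ET2) and the structure of $\EE$-triangles over a Krull-Schmidt category, the $\EE$-triangle $\Omega R_0 \to P \to R_0 \dashrightarrow$ together with right minimality forces the induced extension $\delta\in\EE(R_0,\Omega R_0)$ to have nonzero components in both $\EE(R_0,X)$ and $\EE(R_0,Y)$ (otherwise a summand would split off, again contradicting minimality). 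Then applying (ET4) / taking the pushout along the projections, I would obtain $\EE$-triangles $X \to P_X \to R_0 \dashrightarrow$ and $Y \to P_Y \to R_0 \dashrightarrow$ with $P_X, P_Y$ projective and $P \cong P_X \oplus P_Y$, exhibiting $R_0$ as a direct summand of $\mathrm{Cone}$ of a map into $R_0 \oplus R_0$; more directly, comparing with the minimal deflation and using that $R_0$ is indecomposable, one of these two $\EE$-triangles must be the minimal one and the other must be split, i.e. $X$ or $Y$ is projective — contradiction.

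The main obstacle I anticipate is making the ``split off a summand of an $\EE$-triangle'' arguments rigorous in the extriangulated setting: unlike in triangulated or exact categories, one must be careful that the decomposition of the middle term $P$ is compatible with the decomposition of the outer terms, which is exactly what (ET4), (ET3) and their duals, combined with (WIC) and Krull-Schmidt, are there to guarantee. I would therefore isolate a small lemma (or cite Remark \ref{useful} and the minimality discussion) saying: given an $\EE$-triangle $A \to B \to C \dashrightarrow$ realizing $\delta$ with $B\to C$ right minimal, any direct sum decomposition $A = A_1 \oplus A_2$ with $\delta$ supported only on $A_1$ yields $A_2 = 0$; and dually for the outer term. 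Granting that, indecomposability of $\Omega R_0$ follows as sketched, and right minimality of $p_0$ holds by construction.
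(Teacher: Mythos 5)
Your construction of the right minimal deflation $p_0$, and your observation that each component of the extension $\delta$ in $\EE(R_0,X)$ and $\EE(R_0,Y)$ must be nonzero, are fine; but the core of the indecomposability argument has a genuine gap. A first, symptomatic slip: to split a projective summand $P_1$ off $\Omega R_0$ you need $\EE(R_0,P_1)=0$, and this does not follow from projectivity of $P_1$ (which controls $\EE(P_1,-)$, not $\EE(-,P_1)$) but from the standing hypothesis $\EE(R,\mathcal P)=0$, which you never invoke. More seriously, the decomposition step fails: pushing out $\Omega R_0\to P_0\to R_0$ along the projection onto $X$ yields a conflation $X\to B_X\to R_0$ whose middle term is a cone of $X\oplus Y\to P_0\oplus X$, and there is no reason for $B_X$ to be projective or for $P_0$ to decompose as $P_X\oplus P_Y$. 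The subsequent claim that ``one of these two $\EE$-triangles must be split'' is unjustified and, moreover, inconsistent with your own correct observation that both components of $\delta$ are nonzero. Note that your argument makes no essential use of $\EE(R_0,\mathcal P)=0$, yet without that hypothesis the statement is false: for the radical-square-zero algebra of a commutative-square quiver, the minimal syzygy of the simple at the source is a direct sum of two non-projective simples (and the middle term $P_1$ is indecomposable, so it certainly does not split as $P_X\oplus P_Y$). No argument of this shape can therefore close the gap.

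The paper's route is different and does use the hypothesis. Take a non-projective indecomposable summand $S_1$ of $\Omega R_0=S_1\oplus S_2$; by (WIC) the composite $S_1\to \Omega R_0\xrightarrow{q_0}P_0$ is an inflation, giving an $\EE$-triangle $S_1\xrightarrow{q_1}P_0\xrightarrow{p_1}R_1$ with the \emph{same} projective middle term. Using $\EE(R_0,P_0)=0$ one produces morphisms of $\EE$-triangles in both directions, inducing $r_1\colon R_1\to R_0$ and $r_0\colon R_0\to R_1$. Since $\End_{\B}(R_0)$ is local, either $r_1r_0$ is an isomorphism or some power of it vanishes; in the latter case Remark \ref{useful} shows that $1_{S_1}$ factors through $q_1$, making $S_1$ a summand of $P_0$ and hence projective, a contradiction. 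So $r_1r_0$ is invertible, and right minimality of $p_0$ forces the induced endomorphism of $P_0$ to be invertible, whence $\Omega R_0\cong S_1$ and $S_2=0$. I recommend reworking your proof along these lines rather than trying to repair the pushout decomposition.
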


\begin{proof}
Let $\Omega R_0=S_1\oplus S_2$ where $S_1\notin \mathcal P$ is an indecomposable. Then the morphism $\Omega R_0\xrightarrow{q_0} P_0$ can write this form $S_1\oplus S_2\xrightarrow{\svech{q_1}{q_2}} P_0$. The morphism $q_1$ admits an $\EE$-triangle $\xymatrix{S_1 \ar[r]^{q_1} &P_0 \ar[r]^{p_1} &R_1 \ar@{-->}[r] &}$, since $\Ext^1_\B(R_0,P_0)=0$, we get the following commutative diagram:
$$\xymatrix{
S_1 \ar[d]_{\svecv{1}{0}} \ar[r]^{q_1} &P_0 \ar@{=}[d] \ar[r]^{p_1} &R_1 \ar[d]^{r_1} \ar@{-->}[r] &\\
S_1\oplus S_2 \ar[r]^-{\svech{q_1}{q_2}} \ar[d]_-{\svech{1}{0}} &P_0 \ar[r]^{p_0} \ar[d]^a &R_0 \ar[d]^{r_0} \ar@{-->}[r] &\\
S_1 \ar[r]^{q_1} &P_0 \ar[r]^{p_1} &R_1 \ar@{-->}[r] &.
}
$$
Since $\B$ is Krull-Schmidt, $\End_{\B}(R_0)$ is a local ring. Hence either $r_1r_0$ is an isomorphism, or there exists a integer $n>0$ such that $(r_1r_0)^n=0$. If it is the second case, we have the following commutative diagram
$$\xymatrix{
S_1 \ar@{=}[d] \ar[r]^{q_1} &P_0 \ar[d]^{a^{n+1}} \ar[r]^{p_1} &R_1 \ar[d]^{r_0(r_1r_0)^nr_1=0} \ar@{-->}[r] &\\
S_1 \ar[r]^{q_1} &P_0 \ar[r]^{p_1} &R_1 \ar@{-->}[r] &.
}
$$
This implies that $1_{S_1}$ factors through $q_1$, thus $S_1$ is a direct summand of $P_0$ and $S_1\in \mathcal P$, a contradiction. Hence $r_1r_0$ is an isomorphism. We consider the following commutative diagram
$$\xymatrix{
S_1\oplus S_2 \ar[r]^-{\svech{q_1}{q_2}} \ar[d]_-{\svech{1}{0}} &P_0 \ar[r]^{p_0\quad} \ar[d]^b &R_0 \ar[d]^{r_0} \ar@{-->}[r] &\\
S_1 \ar[d]_{\svecv{1}{0}} \ar[r]^{q_1} &P_0 \ar[d]^a \ar[r]^{p_1} &R_1 \ar[d]^{r_1} \ar@{-->}[r] &\\
S_1\oplus S_2 \ar[r]^-{\svech{q_1}{q_2}} &P_0 \ar[r]^{p_0\quad} &R_0 \ar@{-->}[r] &.
}
$$
Since $p_0$ is right minimal, we get that $ab$ is an isomorphism. Hence $S_1\oplus S_2\simeq S_1$, which implies $\Omega R_0=S_1$ is an indecomposable.
\end{proof}

Let $R=\bigoplus\limits^n_{i=1}R_i$ where $R_i$ is indecomposable. In this paper, we denote $\Omega R:=\bigoplus\limits^n_{i=1}\Omega R_i$ where $\Omega R_i$ is the indecomposable object determined by Lemma \ref{ind}.

We denote $\B'/\mathcal P$ by $\underline \B'$ if $\mathcal P\subseteq \B'\subseteq \B$. For any morphism $f\colon A\to B$ in $\B$, we denote by $\underline{f}$ the image of $f$ under the natural quotient functor $\B\to \uB$. We denote $\B'/\R$ by $\overline {\B'}$ if $\R\subseteq \B'\subseteq \B$. For any morphism $g\colon A\to B$ in $\B$, we denote by $\overline{g}$ the image of $g$ under the natural quotient functor $\B\to \oB$. For objects $A,B\in\B$, let $[R](A,B)$ be the subgroup of $\Hom_{\B}(A,B)$ consisting of morphisms which factor through an object in $\add R$.

\begin{defn}
For two objects $M,N\in\B$, denote by $\overline {[R]}(M,\Sigma N)$ the subset of $\Hom_{\B}(M,\Sigma N)$ such that $\alpha\in \overline {[R]}(M,\Sigma N)$ if we have $\alpha: M\xrightarrow{~h~} R_0 \xrightarrow{~j~} \Sigma N$ where $R_0\in \add R$ {\rm(}which means $\alpha \in [R](M,\Sigma N)${\rm )}, and the following commutative diagram:
$$\xymatrix@C=0.8cm@R0.6cm{
&M\ar[r]^h \ar@{.>}[d] &R_0 \ar[d]^j\\
N \ar[r] &I_N \ar[r]^i &\Sigma N \ar@{-->}[r] &
}
$$
where $I_N\in \mathcal I$.
\end{defn}

\begin{defn}\label{d1}
An object $X$ is called relative rigid (with respect to $\R$) if $\overline {[R]}(X,\Sigma X)={[R]}(X,\Sigma X)$.
\end{defn}

For convenience, a relative rigid object in this paper is also called $\R$-rigid.




\begin{rem}
Let $\B$ be a triangulated category with the shift functor $[1]$. Then any object $X$ is $\R$-rigid if and only if $X[1]$ is $R[1]$-rigid in the sense of \cite[Definition 2.2]{FGL}.
\end{rem}

\begin{lem}\label{summand}
Any $\R$-rigid object is closed under direct summands.
\end{lem}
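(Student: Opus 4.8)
The plan is to show that if $X = X_1 \oplus X_2$ is $\R$-rigid, then each $X_i$ is $\R$-rigid, i.e. $\overline{[R]}(X_i, \Sigma X_i) = [R](X_i, \Sigma X_i)$. The inclusion $\overline{[R]}(X_i,\Sigma X_i) \subseteq [R](X_i,\Sigma X_i)$ is automatic from the definition, so the content is the reverse inclusion. First I would take an arbitrary $\alpha \in [R](X_i, \Sigma X_i)$, say factoring as $X_i \xrightarrow{h} R_0 \xrightarrow{j} \Sigma X_i$ with $R_0 \in \add R$, and I want to produce the requisite dotted lift $M = X_i \to I_{X_i}$ in the defining diagram. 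The natural device is to transport $\alpha$ up to an element of $[R](X, \Sigma X)$ via the split inclusion $\iota_i \colon X_i \to X$ and split projection $\pi_i \colon X \to X_i$: consider $\widetilde{\alpha} := \Sigma(\iota_i) \circ \alpha \circ \pi_i \in \Hom_\B(X, \Sigma X)$, which still factors through $R_0 \in \add R$ and hence lies in $[R](X,\Sigma X)$.

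Next I would use the hypothesis that $X$ is $\R$-rigid to conclude $\widetilde{\alpha} \in \overline{[R]}(X, \Sigma X)$. This gives a commutative diagram with a dotted morphism $X \dashrightarrow I_X$ (where $X \to I_X \to \Sigma X \dashrightarrow$ is the defining $\EE$-triangle with $I_X \in \I$) making the square commute, where the factorization of $\widetilde\alpha$ through $\add R$ is witnessed appropriately. The remaining step is to restrict this data back along the splitting: precompose the dotted morphism $X \to I_X$ with $\iota_i \colon X_i \to X$ to get $X_i \to I_X$, and observe that $I_X$ decomposes compatibly with $X = X_1 \oplus X_2$ (since in a Krull-Schmidt extriangulated category with enough injectives one can choose $I_X = I_{X_1} \oplus I_{X_2}$ and $\Sigma X = \Sigma X_1 \oplus \Sigma X_2$, with the $\EE$-triangle being the direct sum of the two), so projecting onto the $I_{X_i}$-component yields the desired dotted map $X_i \to I_{X_i}$ fitting into the diagram defining $\overline{[R]}(X_i, \Sigma X_i)$. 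Chasing the commutativity, using $\pi_i \iota_i = 1_{X_i}$ and the additivity of $\Sigma$ and of the $\EE$-triangle realization, shows that $\alpha$ indeed lies in $\overline{[R]}(X_i, \Sigma X_i)$.

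The main obstacle I anticipate is purely bookkeeping: making precise that $\Sigma$ and the distinguished $\EE$-triangles behave additively, so that the $\EE$-triangle $X \to I_X \to \Sigma X \dashrightarrow$ can be taken as the direct sum of $X_i \to I_{X_i} \to \Sigma X_i \dashrightarrow$ and that the splitting idempotents of $X$ lift to compatible splittings at the level of $I_X$ and $\Sigma X$. This follows from axiom (ET2) (additivity of the realization $\mathfrak{s}$) together with the Krull-Schmidt property, but one must be careful that the chosen injective hulls and the functor $\Sigma$ respect the decomposition, and that the witness morphism $h \colon X_i \to R_0$ used to certify membership in $[R]$ is the same one before and after transporting along the idempotent. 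Once the additive compatibility is set up, the diagram chase verifying commutativity is routine. Alternatively, and perhaps more cleanly, one can avoid choosing decompositions of $I_X$ by working with an arbitrary $\EE$-triangle $X_i \to I' \to \Sigma X_i \dashrightarrow$ with $I' \in \I$, forming its direct sum with a split triangle on $X_j$ ($j \neq i$) to get an $\EE$-triangle on $X$, applying $\R$-rigidity of $X$ there, and then restricting; this keeps all choices explicit and sidesteps the need to know a priori that $I_X$ splits.
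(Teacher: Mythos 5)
Your argument is correct and is essentially the paper's proof: both transport $\alpha$ to a morphism $X\to\Sigma X$ factoring through $\add R$, invoke $\R$-rigidity of $X=X_1\oplus X_2$, and restrict back along the split inclusion and projection, the only real difference being that the paper compares the two injective copresentations via connecting morphisms $a,b$ with $1-ba$ factoring through $i\colon I_{X_i}\to\Sigma X_i$ rather than choosing $I_X=I_{X_1}\oplus I_{X_2}$ and $\Sigma X=\Sigma X_1\oplus\Sigma X_2$ up front (the two devices are interchangeable, since the rigidity condition is independent of the chosen copresentation). The one thing to repair is your closing aside: direct-summing $X_i\to I'\to\Sigma X_i$ with a split triangle on $X_j$ yields a middle term $I'\oplus X_j$ that need not be injective, so the definition of $\R$-rigidity of $X$ would not apply to that $\EE$-triangle; you should instead sum with an injective copresentation $X_j\to I_j\to\Sigma X_j$.
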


\begin{proof}
Let $X\oplus X'$ be $\R$-rigid and $X\oplus X'\xrightarrow{\svech{h}{h'}} R_0$ be a left $\R$-approximation of $X\oplus X'$. Then $h$ is a left $\R$-approximation of $X$. For any morphism $\alpha: X\xrightarrow{~h~} R_0 \xrightarrow{~j~} \Sigma X$, we consider the following commutative diagram
$$\xymatrix{
X \ar[r]^-{\svecv{1}{0}} &X\oplus X' \ar[r]^-{\svech{h}{h'}} &R_0 \ar[d]^j\\
X \ar[r] \ar[d]_-{\svecv{1}{0}} &I_X \ar[r]^i \ar[d]^{i_1} &\Sigma X \ar[d]^a \ar@{-->}[r] &\\
X\oplus X' \ar[d]_-{\svech{0}{1}} \ar[r] &I \ar[d]^{i_2} \ar[r]^{i'\quad} &\Sigma X\oplus \Sigma X' \ar[d]^b \ar@{-->}[r] &\\
X \ar[r] &I_X \ar[r]^i &\Sigma X \ar@{-->}[r] &
}
$$
Since $X\oplus X'$ is $\R$-rigid, there is a morphism $X\oplus X' \xrightarrow{\svech{x}{x'}} I$ such that $i'\svech{x}{x'}=aj\svech{h}{h'}$. There is a morphism $y:\Sigma X\to I_X$ such that $1-ba=iy$. Now we have $ii_2x=bi'x=bajh=(1-iy)jh$, hence $jh=i(yjh+i_2x)$, which implies $X$ is an $\R$-rigid object.
\end{proof}

We have the following useful lemma.

\begin{lem}\label{rigid}
$X$ is $\R$-rigid if and only if there exists the following commutative diagram
$$\xymatrix{
\Omega X \ar[r]^s \ar[d]^q &\Omega R_1 \ar@{=}[r] \ar[d]^f  &\Omega R_1 \ar[d]^{p_1}\\
P_X \ar[r] \ar[d] &\Omega R_2 \ar[r] \ar[d]^g &P \ar[r] \ar[d] &R_2 \ar@{=}[d] \ar@{-->}[r] &\\
X \ar@{=}[r] \ar@{-->}[d] &X \ar[r]_h \ar@{-->}[d] &R_1 \ar[r] \ar@{-->}[d] &R_2 \ar@{-->}[r] &\\
& & &
}
$$
where $R_1,R_2\in \R$ and $P,P_X\in \mathcal P$, $\Hom_{\uB}(\underline f,X)$ is surjective.
\end{lem}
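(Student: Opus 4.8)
The plan is to reduce the asserted equivalence, through a chain of functorial identifications, to the single statement that the connecting map $h^{\ast}\colon\EE(R_1,X)\to\EE(X,X)$ attached to the bottom row $X\xrightarrow{h}R_1\xrightarrow{h'}R_2\dashrightarrow$ of the diagram vanishes, and then to recognize that vanishing as the surjectivity of $\Hom_{\uB}(\underline f,X)$. Fix the $\EE$-triangles $\Omega R_i\xrightarrow{p_i}P_i\xrightarrow{\pi_i}R_i\dashrightarrow\delta_i$ from Lemma \ref{ind} and an $\EE$-triangle $X\to I_X\xrightarrow{i_X}\Sigma X\dashrightarrow\delta_X$ with $I_X\in\I$. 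Applying $\Hom_{\B}(-,?)$ and $\EE(-,?)$ to these triangles, using the long exact sequences of \cite[Proposition 5.2]{LN}, and invoking $\EE(R,\mathcal P)=0$ (so $\EE(\R,\mathcal P)=0$) together with the rigidity of $R$ (so $\EE(\R,\add R)=0$), I would first record three facts. (i) The connecting map yields a \emph{natural} isomorphism $\EE(R_i,X)\cong\Hom_{\uB}(\Omega R_i,X)$: it is surjective because $\EE(P_i,X)=0$, and its kernel is exactly the morphisms $\Omega R_i\to X$ factoring through $\mathcal P$, since every $\Omega R_i\to P'$ with $P'\in\mathcal P$ factors through $p_i$ (here one uses $\EE(R_i,\mathcal P)=0$). (ii) For each $Z$, the map $\partial^{Z}\colon\Hom_{\B}(Z,\Sigma X)\to\EE(Z,X)$, $v\mapsto v^{\ast}\delta_X$, is surjective ($\EE(Z,I_X)=0$ because $I_X$ is injective) with kernel the morphisms factoring through $i_X$, and the $\partial^{Z}$ are compatible with precomposition. (iii) Applying $\Hom_{\B}(-,R_0)$ to the bottom row and using $\EE(R_2,R_0)=0$ shows $-\circ h\colon\Hom_{\B}(R_1,R_0)\to\Hom_{\B}(X,R_0)$ is onto for every $R_0\in\add R$; thus $h$ is a left $\add R$-approximation of $X$.

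Granting (i)--(iii), the argument proceeds as follows. A morphism $X\to R_0\to\Sigma X$ with $R_0\in\add R$ can, by (iii), be written $v\circ h$ with $v\colon R_1\to\Sigma X$, and by (ii) it factors through $i_X$ precisely when $h^{\ast}(v^{\ast}\delta_X)=(vh)^{\ast}\delta_X=0$; a morphism through $\mathcal P$ automatically factors through $i_X$ (take $Z\in\mathcal P$ in (ii)); and by (ii) every class in $\EE(R_1,X)$ is of the form $v^{\ast}\delta_X$. Hence $X$ is $\R$-rigid if and only if $h^{\ast}\colon\EE(R_1,X)\to\EE(X,X)$ is the zero map. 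Now the long exact sequence of \cite[Proposition 5.2]{LN} for the bottom row reads $\EE(R_2,X)\xrightarrow{h'^{\ast}}\EE(R_1,X)\xrightarrow{h^{\ast}}\EE(X,X)$, so $h^{\ast}=0$ iff $h'^{\ast}$ is surjective. Finally, lifting $h'$ through the presentations $p_1,p_2$ to a map $f\colon\Omega R_1\to\Omega R_2$ on syzygies — this $f$ is the morphism in column $2$ of the diagram, and the top-right square of the diagram exhibits $p_1$ as factoring through $f$ — the naturality in (i) identifies $h'^{\ast}$ with $\Hom_{\uB}(\underline f,X)$. Thus $X$ is $\R$-rigid iff $\Hom_{\uB}(\underline f,X)$ is surjective, which gives both directions of the lemma once the diagram is in hand.

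It remains to produce the diagram (for the "only if" direction) and to see that any diagram of that shape carries the map $f$ just described. One builds it from a projective presentation $\Omega X\xrightarrow{q}P_X\to X\dashrightarrow$ of $X$ (column $1$); the presentation $\Omega R_1\xrightarrow{p_1}P\to R_1\dashrightarrow$ of $R_1$ with $P=P_1$ (column $3$); the $\EE$-triangle $X\xrightarrow{h}R_1\to R_2\dashrightarrow$ exhibiting $X$ in $\h$ (bottom row, available for the forward direction since $X\in\h$); the $\EE$-triangle $\Omega R_1\xrightarrow{f}\Omega R_2\xrightarrow{g}X\dashrightarrow$ with $f$ the syzygy comparison map of $h'$ (column $2$); and the middle row together with the remaining morphisms supplied by iterated use of (ET3), (ET4), their duals, and Remark \ref{useful}. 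Conversely, the commutativity of a diagram of this shape, together with the $\EE$-triangles it contains, forces its column-$2$ map to represent the syzygy comparison map of $h'$ modulo $\mathcal P$, so surjectivity of $\Hom_{\uB}(\underline f,X)$ feeds back into the chain of equivalences. I expect the diagram assembly to be the genuine obstacle: one must steer the octahedral completions so that the \emph{canonical minimal} syzygies $\Omega R_1,\Omega R_2$ of Lemma \ref{ind} — not merely objects stably isomorphic to them — occupy the displayed positions, absorbing the spurious projective summands that octahedral constructions introduce by using the right minimality of the $p_i$ and condition (WIC), while keeping $f$ compatible with both $p_1$ and $h'$. The functorial reductions of the earlier paragraphs are, by contrast, routine diagram chases.
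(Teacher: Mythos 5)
Your argument is correct and follows essentially the same route as the paper's: both reduce $\R$-rigidity to the vanishing of the map $\EE(R_1,X)\to\EE(X,X)$ induced by the left $\R$-approximation $h$, and then translate this, via the identification $\EE(R_i,X)\cong\Hom_{\uB}(\Omega R_i,X)$, into surjectivity of $\Hom_{\uB}(\underline f,X)$ — the paper merely carries out these identifications by explicit diagram chases with Remark \ref{useful} where you invoke the long exact sequences of \cite{LN}. Your worry about steering the construction onto the \emph{minimal} syzygies is unnecessary: everything relevant is computed in $\uB$, where projective summands are invisible, and indeed the paper's own diagram uses the same projective $P$ over both $R_1$ and $R_2$, so its ``$\Omega R_2$'' cannot literally be the minimal syzygy either.
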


\begin{proof}
X admits an $\EE$-triangle $\xymatrix@C=0.8cm@R0.6cm{X \ar[r] &I_X \ar[r]^i &\Sigma X \ar@{-->}[r] &}$. Let $j:R_1\to \Sigma X$ (resp. $x:\Omega R_1\to X$) be any morphism, we can the following commutative diagram
$$\xymatrix{
\Omega X \ar[r]^s \ar[d]^q &\Omega R_1 \ar@{=}[r] \ar[d]^f  &\Omega R_1 \ar[d]^{p_1} \ar[r]^x &X \ar[d]\\
P_X \ar[r]^{a} \ar[d] &\Omega R_2 \ar[r]^b \ar[d]^g &P \ar[r] \ar[d] &I_X \ar[d]^i \\
X \ar@{=}[r] \ar@{-->}[d] &X \ar[r]_h \ar@{-->}[d] &R_1 \ar[r]_j \ar@{-->}[d] &\Sigma X \ar@{-->}[d]\\
& & & &
}
$$

If $\Hom_{\uB}(\underline f,X)$ is surjective, we show that $jh$ factors through $i$, which by definition means $X$ is $\R$-rigid.\\

Now let $j:R_1\to \Sigma X$ be any morphism. There is a morphism $x_1:\Omega R_2\to X$ such that $\underline {x}=\underline {x_1f}$, then $x-x_1f$ factors through $\mathcal P$, hence we have $x-x_1f:\Omega R_1\xrightarrow{p_1} P \xrightarrow{p_2} X$. Thus $$xs=x_1fs+p_2p_1s=x_1aq+p_2baq=(x_1+p_2b)aq,$$ this implies $jh$ factors through $i$.

Now assume that $X$ is $\R$-rigid, we show that $\Hom_{\B}(f,X)$ is surjective, hence $\Hom_{\uB}(\underline f,X)$ is surjective.
Let $x:\Omega R_1\to X$ be any morphism. Since $X$ is $\R$-rigid, $jh$ factors through $i$, which implies $xs$ factors through $q$. Hence $x$ factors through $f$.
\end{proof}

\begin{lem}\label{sum}
Let $X_0$ be an $\R$-rigid and $R_0\in \R$. Then $X_0\oplus R_0$ is $\R$-rigid if and only if $\EE(R_0,X_0)=0$.
\end{lem}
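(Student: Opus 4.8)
The plan is to use the criterion of Lemma \ref{rigid} to reduce the $\R$-rigidity of $X_0\oplus R_0$ to the surjectivity of a suitable induced map, and then to match the ``new'' part of that map against the obstruction group $\EE(R_0,X_0)$. First I would note the easy direction: if $X_0\oplus R_0$ is $\R$-rigid, then by Lemma \ref{summand} so is each summand, but more importantly, testing the $\R$-rigidity condition $\overline{[R]}(X_0\oplus R_0,\Sigma(X_0\oplus R_0))=[R](X_0\oplus R_0,\Sigma(X_0\oplus R_0))$ against the component $R_0\to\cdots\to\Sigma X_0$ should force every map $R_0\to\Sigma X_0$ factoring through $\add R$ to factor through $I_{X_0}$; since $R_0\in\R$ is itself (up to projective summands) in $\add R$, the identity-type factorization $R_0\xrightarrow{1}R_0\xrightarrow{j}\Sigma X_0$ is available for any $j$, so $j$ must factor through $i\colon I_{X_0}\to\Sigma X_0$, i.e. $\Hom_\B(R_0,i)$ is surjective. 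Applying $\Hom_\B(R_0,-)$ to the defining $\EE$-triangle $X_0\to I_{X_0}\xrightarrow{i}\Sigma X_0\dashrightarrow$ and using $\EE(R_0,I_{X_0})=0$ (as $I_{X_0}$ is injective), the long exact sequence gives $\EE(R_0,X_0)=0$.

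For the converse, assume $X_0$ is $\R$-rigid and $\EE(R_0,X_0)=0$. By Lemma \ref{rigid} there is a commutative diagram as in that lemma for $X_0$, with left $\R$-approximation $h_0\colon X_0\to R_1^{0}$ and the associated $f_0\colon\Omega R_1^{0}\to\Omega R_2^{0}$ such that $\Hom_{\uB}(\underline{f_0},X_0)$ is surjective. I would construct a left $\R$-approximation of $X_0\oplus R_0$ by adjoining the identity on $R_0$: since $R_0\in\R$, the map $\svech{h_0}{0}{0}{1}\colon X_0\oplus R_0\to R_1^{0}\oplus R_0$ (appropriately interpreted) is a left $\R$-approximation, and the associated syzygy comparison map becomes block-diagonal, $f_0\oplus(1_{\Omega R_0}\text{-type map})$. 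Then $\Hom_{\uB}(-,X_0\oplus R_0)$ applied to this block map splits as a direct sum; surjectivity in the $X_0$-block comes from $\Hom_{\uB}(\underline{f_0},X_0)$ being surjective plus the fact that $\Hom_{\uB}(\underline{f_0},R_0)$-type contributions land correctly, and surjectivity of the remaining block — maps out of $\Omega R_0$ into $X_0$ and into $R_0$ — is exactly where I would invoke $\EE(R_0,X_0)=0$, via the long exact sequence coming from $\Omega R_0\to P_0\to R_0\dashrightarrow$ together with rigidity $\EE(R_0,R_0)=0$ of $R$ and $\EE(R_0,\mathcal P)=0$. Concretely, a map $\Omega R_0\to X_0\oplus R_0$ factors through $P_0$ (hence is killed in $\uB$) as soon as the obstruction in $\EE(R_0,X_0\oplus R_0)=\EE(R_0,X_0)\oplus\EE(R_0,R_0)=0$ vanishes, which it does. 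Feeding this back through Lemma \ref{rigid} in the reverse direction yields that $X_0\oplus R_0$ is $\R$-rigid.

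The main obstacle I anticipate is purely bookkeeping: setting up the block structure of the Lemma \ref{rigid} diagram for a direct sum where one summand already lies in $\R$, and checking that the left $\R$-approximation of $X_0\oplus R_0$ can indeed be taken in the block form above (one must be slightly careful because $R_0$ may have projective summands absorbed into $\mathcal P$, and $\Omega R_0$ is only defined up to projective summands). Once the approximation is in block form, the separation of the surjectivity condition into an ``old'' block (handled by the hypothesis that $X_0$ is $\R$-rigid) and a ``new'' block (handled by $\EE(R_0,X_0)=0$ and the rigidity of $R$) is routine. An alternative, possibly cleaner route avoids Lemma \ref{rigid} entirely: work directly with the definition, writing an arbitrary element of $[R](X_0\oplus R_0,\Sigma(X_0\oplus R_0))$ as a $2\times 2$ matrix of maps, observing that the $(X_0,X_0)$-entry is controlled by $\R$-rigidity of $X_0$, the entries involving $R_0$ in the source or target factor through $\add R$ and then through injectives by the computation $\EE(R_0,X_0)=\EE(R_0,R_0)=0$, and assembling a global factorization through $\Sigma(X_0\oplus R_0)$'s injective hull. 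I would present whichever of the two is shorter after drafting.
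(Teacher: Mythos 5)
Your proposal is correct, and your ``alternative, possibly cleaner route'' at the end is in fact the paper's own proof: the paper argues directly from the definition, testing $\R$-rigidity of $X_0\oplus R_0$ against the map $\left(\begin{smallmatrix}0&y\\0&0\end{smallmatrix}\right)$ built from an arbitrary $y\colon R_0\to\Sigma X_0$ (routed through the block left $\R$-approximation $\left(\begin{smallmatrix}h_0&0\\0&1\end{smallmatrix}\right)\colon X_0\oplus R_0\to R_1\oplus R_0$) to get $\EE(R_0,X_0)=0$, and conversely decomposing an arbitrary $\beta\colon R_1\oplus R_0\to\Sigma X_0\oplus\Sigma R_0$ into its four entries, of which three are killed by rigidity of $\R$ and $\R$-rigidity of $X_0$, while the entry $R_0\to\Sigma X_0$ is exactly where $\EE(R_0,X_0)=0$ enters. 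Your primary route through Lemma \ref{rigid} is a genuine variant: it moves the matrix bookkeeping from the $\Sigma$-side to the syzygy side, and the ``new'' block is controlled by $\Hom_{\uB}(\Omega R_0,X_0\oplus R_0)\cong\EE(R_0,X_0)\oplus\EE(R_0,R_0)$, using the standing hypothesis $\EE(R,\mathcal P)=0$ to identify $\Hom_{\uB}(\Omega R_0,-)$ with $\EE(R_0,-)$, and $\Hom_{\uB}(\Omega R_1,R_0)\cong\EE(R_1,R_0)=0$ for the off-diagonal block. This works and has the merit of making the lemma look like a formal companion to Lemma \ref{AIR}; its cost is precisely the care you flag about $\Omega$ being defined only up to projective summands (the ``$\Omega R_2$'' of the $R_0$-block is a projective, harmless because $\Hom_{\uB}$ kills it). The definition-based argument is the shorter write-up, which is presumably why the paper chose it.
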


\begin{proof}
Let $y:R_0\to \Sigma X_0$ be any morphism. Then we have the following commutative diagram:
$$\xymatrix@C=1.5cm{
&X_0\oplus R_0 \ar[r]^-{\left(\begin{smallmatrix}
h_0&0\\
0&1
\end{smallmatrix}\right)} &R_1\oplus R_0 \ar[d]^-{\svech{0}{1}}\\
\Omega R_0 \ar[d]^x \ar[r] &P_0 \ar[d] \ar[r] &R_0 \ar[d]^y \ar@{-->}[r] &\\
X_0 \ar[r]^{j_0} \ar[d]^{\svecv{1}{0}} &I \ar[r]^{i_0} \ar[d]^{\svecv{1}{0}} &\Sigma X_0 \ar[d]^{\svecv{1}{0}} \ar@{-->}[r] &\\
X_0\oplus R_0 \ar[r]_-{\left(\begin{smallmatrix}
j_0&0\\
0&j_R
\end{smallmatrix}\right)} &I\oplus I_R \ar[r]_-{\left(\begin{smallmatrix}
i_0&0\\
0&i_R
\end{smallmatrix}\right)=\alpha} &\Sigma X_0\oplus \Sigma R_0 \ar@{-->}[r] &
}
$$
where $h_0$ is a left $\R$-approximation of $X_0$ and $P_0\in \mathcal P$, $I, I_R\in \mathcal I$.
\medskip

If $X_0\oplus R_0$ is $\R$-rigid, there is a morphism $X_0\oplus R_0 \xrightarrow{\left(\begin{smallmatrix}
a&b\\
c&d
\end{smallmatrix}\right)} I\oplus I_R$ such that $$\left(\begin{smallmatrix}
i_0&0\\
0&i_R
\end{smallmatrix}\right)\left(\begin{smallmatrix}
a&b\\
c&d
\end{smallmatrix}\right)=\svecv{1}{0}y\svech{0}{1}\left(\begin{smallmatrix}
h_0&0\\
0&1
\end{smallmatrix}\right).$$
It follows that $i_0b=y$ and then $\EE(R_0,X_0)=0$.

Let $\beta=\left(\begin{smallmatrix}
a'&b'\\
c'&d'
\end{smallmatrix}\right):R_1\oplus R_0\to \Sigma X_0\oplus \Sigma R_0$ be any morphism. We have $b':R_1\to \Sigma R_0$ and $d':R_0\to \Sigma R_0$ factor through $i_R$ since $\R$ is rigid, $a':R_1\to \Sigma X_0$ factors through $i_0$ since $X_0$ is $\R$-rigid. If $\EE(R_0,X)=0$, we have $c':R_0\to \Sigma X_0$ factors through $i_0$, this implies $\beta$ factors through $\alpha$. Hence by definition $X_0\oplus R_0$ is $\R$-rigid.
\end{proof}

\subsection{Relative maximal rigid objects}
Let $\h=\CoCone(\R,\R)$. Then according to \cite{LN}, $\Hom_{\uB}(\Omega R,-)$ induces an equivalence $G:\overline{\h}\xrightarrow{~\simeq~} \mod\End_{\uB}(\Omega R)=:\Gamma$. In fact we have the following commutative diagram:
$$\xymatrix@C=0.8cm@R0.5cm{
\B \ar[dr]_H \ar[rr]^-{\Hom_{\uB}(\Omega R,-)} &&\Gamma\\
&\overline \h \ar[ur]_G^{\simeq}
}
$$
where $H$ is the cohomological functor defined in \cite{LN}. $H$ has also the following properties:
\begin{itemize}
\item[(1)] $H(X)=X$ if $X\in \h$; $H(f)=0$ if and only if $f$ factors through  $\R^{\bot_1}=\{X\in\B~|~\Ext^1(\R,X)=0\}$.
\item[(2)] By applying $H$ to an $\EE$-triangle $\xymatrix{X\ar[r]^f &B \ar[r]^g &C \ar@{-->}[r] &}$, we get an exact sequence $$H(X) \xrightarrow{H(f)} H(Y) \xrightarrow{H(g)} H(Z).$$
\end{itemize}

\begin{defn}\label{d2}
An object $X\in \h$ is called relative maximal rigid with respect to $\h$ if for any non-projective indecomposable object $Z\in \h$, $X\oplus Z$ is $\R$-rigid implies $Z\in \add X$. For convenience, $X$ is also called a maximal $\R$-rigid object.
\end{defn}

\begin{lem}\label{eq}
For any $X,Y\in \h$, we have $\Hom_{\Gamma}(\Hom_{\uB}(\Omega R,X),\Hom_{\uB}(\Omega R,Y))\simeq \Hom_{\oB}(X,Y)$.
\end{lem}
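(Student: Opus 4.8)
The plan is to exploit the equivalence $G\colon\overline{\h}\xrightarrow{\simeq}\Gamma\text{-mod}$ induced by $\Hom_{\uB}(\Omega R,-)$, which was recalled just before the statement. Since $G$ is an equivalence of categories, it is in particular fully faithful, so for any $X,Y\in\h$ we have $\Hom_{\oB}(X,Y)\xrightarrow{\sim}\Hom_{\Gamma}(G(X),G(Y))=\Hom_{\Gamma}(\Hom_{\uB}(\Omega R,X),\Hom_{\uB}(\Omega R,Y))$. So at the level of a single sentence the lemma is immediate. The real content of a careful write-up is to verify that the map exhibiting this isomorphism is the natural one, i.e. that the functor $\Hom_{\uB}(\Omega R,-)$ restricted to $\h$ genuinely agrees with $G\circ(\text{quotient }\B\to\overline{\h})$ on objects and morphisms, and that $G$ really is full and faithful (the excerpt only asserts it is an equivalence, citing \cite{LN}).

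Concretely, I would proceed as follows. First I would recall from the diagram preceding the statement that the cohomological functor $H\colon\B\to\overline{\h}$ satisfies $H(X)=X$ for $X\in\h$, and that $\Hom_{\uB}(\Omega R,-)=G\circ H$; hence on $\h$ the functor $\Hom_{\uB}(\Omega R,-)$ is exactly $G$ composed with the projection $\h\to\overline{\h}$. Next, since $G$ is an equivalence, it induces for all $X,Y\in\h$ a bijection
$$\Hom_{\overline{\h}}(X,Y)\xrightarrow{\ \sim\ }\Hom_{\Gamma}\bigl(G(X),G(Y)\bigr)=\Hom_{\Gamma}\bigl(\Hom_{\uB}(\Omega R,X),\Hom_{\uB}(\Omega R,Y)\bigr).$$
Finally I would identify $\Hom_{\overline{\h}}(X,Y)$ with $\Hom_{\oB}(X,Y)$: by definition $\overline{\h}=\h/\R$ is a full subcategory of $\oB=\B/\R$, so the two Hom-groups coincide. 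Chaining these identifications gives the claimed isomorphism $\Hom_{\Gamma}(\Hom_{\uB}(\Omega R,X),\Hom_{\uB}(\Omega R,Y))\simeq\Hom_{\oB}(X,Y)$.

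The step I expect to require the most care is the faithfulness/fullness of $G$, or equivalently checking precisely what it means for a morphism $f\colon X\to Y$ in $\B$ (with $X,Y\in\h$) to map to zero under $\Hom_{\uB}(\Omega R,-)$: by property (1) of $H$ this happens iff $f$ factors through $\R^{\bot_1}$, and one must confirm that for $X,Y\in\h$ this is the same as $f$ factoring through $\R$, i.e. that $\overline{[R]}(X,Y)$-type morphisms are exactly the kernel. This is essentially the $\h$-version of the rigidity bookkeeping already used in Lemmas \ref{rigid} and \ref{sum}, and is where an honest proof must do a small diagram chase rather than just invoke ``$G$ is an equivalence''. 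Everything else — naturality of the identification and the description of $\overline{\h}$ inside $\oB$ — is formal.
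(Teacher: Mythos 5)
Your proposal is correct and is exactly the route the paper takes: the paper gives no written proof of Lemma \ref{eq}, treating it as an immediate consequence of the equivalence $G\colon\overline{\h}\xrightarrow{\ \simeq\ }\mod\Gamma$ induced by $\Hom_{\uB}(\Omega R,-)$ (cited from \cite{LN}), combined with the observation that $\overline{\h}=\h/\R$ sits inside $\oB=\B/\R$ as a full subcategory, so $\Hom_{\overline{\h}}(X,Y)=\Hom_{\oB}(X,Y)$. The point you flag as delicate --- that the kernel of $\Hom_{\uB}(\Omega R,-)$ on morphisms between objects of $\h$ consists of the maps factoring through $\R$ rather than merely through $\R^{\bot_1}$ --- is a legitimate check, but it is already packaged into the cited equivalence, so your write-up matches the paper's (implicit) argument.
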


\subsection{$\tau$-tilting theory}Let $\Lambda$ be a finite dimensional basic algebra. We denote by $\mathsf{proj}\Lambda$ the subcategory consisting of
finite dimensional left projective $\Lambda$-modules. For each $\Lambda$-module $X$, we denote the number of
non-isomorphic indecomposable direct summands of $X$ by $|X|$. Let $\tau$ be the Auslander-Reiten translation
in $\mod\Lambda$, we recall in the following some basic concepts in $\tau$-tilting theory.
\begin{defn}\cite[Definition 0.3]{AIR}
Let $(X, P)$ be a pair with $X\in \mod\Lambda$ and $P\in\mathsf{proj}\Lambda$.
\begin{itemize}
\item We say that $(X, P)$ is {\rm basic} if $X$ and $P$ are basic.
\item $X$ is called $\tau${\rm -rigid} if {\rm Hom}$_{\Lambda}(X,\tau X)=0$.
\item $(X, P)$ is called a $\tau${\rm -rigid pair} if $X$ is $\tau$-rigid and {\rm Hom}$_{\Lambda}(P, X)=0$.
\item $X$ is called $\tau${\rm -tilting} if $X$ is $\tau$-rigid and $|X|=|\Lambda|$.
\item A $\tau$-rigid pair $(X, P)$ is said to be a {\rm support $\tau$-tilting} pair if $|X|+|P|=|\Lambda|$. In this case, $X$ is also called a {\rm support $\tau$-tilting module}.
\end{itemize}
\end{defn}

We need the following lemma, which has been proved in \cite[Proposition 2.4]{AIR}.

\begin{lem}\label{AIR}
For any object $M\in \mod \Lambda$, denote by $P^M_1\xrightarrow{~f~} P^M_0\to M\to 0$  a minimal projective presentation of $M$. Then $M$ is $\tau$-rigid if and only if $\Hom_{\Lambda}(f,M)$ is surjective.
\end{lem}

Now we state and prove our first theorem. This result generalizes their work by Adachi-Iyama-Reiten \cite[Theorem 4.1]{AIR}, Yang-Zhu \cite[Theorem 3.6]{YZ} and Fu-Geng-Liu \cite[Theorem 2.5]{FGL}. 

\begin{thm}\label{main}
\begin{itemize}
\item[(a)] Let $X\in \h$. Then $X$ is $\R$-rigid if and only if $\Hom_{\uB}(\Omega R,X)$ is $\tau$-rigid.
\item[(b)] $\Hom_{\uB}(\Omega R,-)$ yields a bijection between the set of isomorphism classes of basic $\R$-rigid objects in $\h$ which have no direct summands in $\mathcal P$ and the set of isomorphism classes of basic $\tau$-rigid pairs of $\Gamma$-modules.
\item[(c)] $\Hom_{\uB}(\Omega R,-)$ yields a bijection between the set of isomorphism classes of basic maximal $\R$-rigid objects in $\h$ which have no direct summands in $\mathcal P$ and the set of isomorphism classes of basic support $\tau$-tilting pairs of $\Gamma$-modules.
\end{itemize}
\end{thm}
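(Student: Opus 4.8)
The plan is to prove the three statements in order, using the equivalence $G\colon\overline{\h}\xrightarrow{\simeq}\Gamma$ and the functor $H$ as the main bridge between the relative-rigid side and the $\tau$-tilting side. For part (a), the strategy is to compare the characterization of $\R$-rigidity in Lemma \ref{rigid} with the characterization of $\tau$-rigidity in Lemma \ref{AIR} (the Adachi–Iyama–Reiten criterion). Concretely, given $X\in\h$, I would take the commutative diagram furnished by Lemma \ref{rigid}, apply $\Hom_{\uB}(\Omega R,-)$ to the morphism $\underline f\colon\Omega R_2\to\Omega R_1$, and argue that this produces a minimal projective presentation $P_1^M\to P_0^M\to M\to 0$ of $M:=\Hom_{\uB}(\Omega R,X)$ in $\mod\Gamma$. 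Here $\Omega R_1,\Omega R_2$ correspond under $G$ to projective $\Gamma$-modules, and the properties of $H$ — in particular $H(X)=X$ for $X\in\h$, and the right-exactness of $H$ applied to the relevant $\EE$-triangles — let me identify $\Hom_{\uB}(\Omega R,\underline f)$ with the presentation map. Minimality has to be checked: it should follow from the fact that $p_0$ (and the analogous maps) are chosen right minimal via (WIC), plus the fact that $G$ is an equivalence. Then "$\Hom_{\uB}(\underline f,X)$ is surjective" (the conclusion of Lemma \ref{rigid}) translates, via Lemma \ref{eq} which identifies $\Hom_{\oB}(-,-)$ with $\Hom_\Gamma(G-,G-)$, exactly into "$\Hom_\Gamma(f,M)$ is surjective", which by Lemma \ref{AIR} is $\tau$-rigidity of $M$.

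For part (b), I would first observe that any $\R$-rigid object $X\in\h$ decomposes as $X=X'\oplus R_0\oplus P_X$ with $R_0\in\add R$, $P_X\in\mathcal P$, and $X'$ having no summand in $\R$; restricting to objects without projective summands we write $X=X'\oplus R_0$. Under $\Hom_{\uB}(\Omega R,-)$, the summand $R_0=\bigoplus R_{i}$ is sent to $\bigoplus\Hom_{\uB}(\Omega R,R_i)$, and by construction $\Hom_{\uB}(\Omega R,R_i)$ is indecomposable projective — these assemble into the $P$-component of a $\tau$-rigid pair, while $X'$ maps to the module $M=G(X')$. By Lemma \ref{sum}, $X'\oplus R_0$ being $\R$-rigid is equivalent to $X'$ being $\R$-rigid together with $\EE(R_0,X')=0$; the first condition gives $\tau$-rigidity of $M$ by part (a), and the second translates via the functor $H$ (whose kernel on Hom is exactly maps factoring through $\R^{\perp_1}$, and which sends the $\EE$-triangle defining $\Omega R_i$ to a projective cover) into $\Hom_\Gamma(P,M)=0$. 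Conversely, given a basic $\tau$-rigid pair $(M,P)$, the equivalence $G$ produces $X'\in\h$ with $G(X')=M$, one reconstructs the summand $R_0\in\add R$ from $P$ via the projective covers $\Omega R_i\mapsto P_i$, and the same two lemmas show $X'\oplus R_0$ is $\R$-rigid. Well-definedness on isomorphism classes, and injectivity, both reduce to $G$ being an equivalence and to the observation from Lemma \ref{summand} that $\R$-rigid objects are closed under summands, so basic objects correspond to basic ones.

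For part (c), I would upgrade the bijection of (b) by showing it matches the maximality conditions on the two sides. The key point is to establish that $X\in\h$ is maximal $\R$-rigid (no non-projective indecomposable $Z\in\h$ can be added) if and only if the pair $(G(X'),P)$ satisfies $|G(X')|+|P|=|\Gamma|$, i.e.\ is a support $\tau$-tilting pair. One direction: if $X$ is maximal $\R$-rigid but the pair is not support $\tau$-tilting, then by the AIR classification there is a way to enlarge the $\tau$-rigid pair, and pulling this back through $G$ (together with the reconstruction of projective summands from part (b)) yields a non-projective indecomposable $Z\in\h$ with $X\oplus Z$ still $\R$-rigid, contradicting maximality. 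The other direction is symmetric. I would lean on the AIR result that a $\tau$-rigid pair $(M,P)$ is support $\tau$-tilting iff it is maximal with respect to the natural partial order on $\tau$-rigid pairs, so that the content is purely the transport of "maximal" across the equivalence. I expect the main obstacle to be in part (a), specifically verifying that applying $\Hom_{\uB}(\Omega R,-)$ to the Lemma \ref{rigid} diagram genuinely yields a \emph{minimal} projective presentation — one must control the minimality of the maps produced by (WIC) and rule out splitting-off of projective summands after applying $H$; the accounting of projective direct summands (which objects lie in $\mathcal P$ versus $\R$, and how $H$ and $\Omega$ interact with them) is the delicate bookkeeping that makes the rest of the argument go through cleanly.
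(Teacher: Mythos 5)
Your overall strategy coincides with the paper's: part (a) is proved by matching Lemma \ref{rigid} against Lemma \ref{AIR} through Lemma \ref{eq} and Yoneda, part (b) by combining Lemmas \ref{summand} and \ref{sum}, and part (c) by transporting maximality through the bijection of (b) using the AIR completion results. One remark on the point you identify as the main obstacle in (a): the paper avoids having to prove minimality of the image of the Lemma \ref{rigid} diagram by running the argument in the opposite direction --- it starts from a minimal projective presentation of $\Hom_{\uB}(\Omega R,X)$ in $\mod\Gamma$ and uses Yoneda to realize the presentation map as $\Hom_{\uB}(\Omega R,\underline f)$ for a morphism $\underline f$ between objects of $\add \Omega R$, to which Lemma \ref{rigid} is then applied; adopting this direction spares you the minimality bookkeeping you flag.

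There is, however, a genuine error in your part (b). You assert that the summand $R_0=\bigoplus R_i$ is sent to $\bigoplus\Hom_{\uB}(\Omega R,R_i)$ and that each $\Hom_{\uB}(\Omega R,R_i)$ is an indecomposable projective $\Gamma$-module supplying the $P$-component of the pair. This is false: by dimension shift along the $\EE$-triangle $\Omega R_i\to P_i\to R_i\dashrightarrow$ one has $\Hom_{\uB}(\Omega R,R_i)\cong\EE(R,R_i)=0$ because $R$ is rigid. With your description the $P$-component would always vanish and the map would forget $R_0$ entirely, so it could not be injective on isomorphism classes. The correct assignment --- the one the paper uses --- is $X=X_0\oplus R_0\mapsto(\Hom_{\uB}(\Omega R,X_0),\Hom_{\uB}(\Omega R,\Omega R_0))$: the projective component is obtained by first passing from $R_0$ to $\Omega R_0$, and Lemma \ref{ind} (indecomposability of $\Omega R_i$) is exactly what guarantees that $R_0\mapsto\Omega R_0$ is a bijection on indecomposables, so no information is lost. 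Your converse direction ("reconstruct $R_0$ from $P$") shows you are aware of this correspondence, but the forward map must be corrected accordingly; once it is, the rest of (b) and (c) goes through essentially as you outline and as in the paper.
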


\begin{proof}
(a) Let $\Hom_{\uB}(\Omega R,\Omega R_1) \xrightarrow{\alpha} \Hom_{\uB}(\Omega R,\Omega R_0) \xrightarrow{\beta} \Hom_{\uB}(\Omega R,X)\to 0$ be a minimal projective resolution. By Yoneda's Lemma, we have $\alpha=\Hom_{\uB}(\Omega R,\underline f)$. Thus we have the following commutative diagram by Lemma \ref{eq}.
$$\xymatrix{
\Hom_{\uB}(\Omega R_0,X) \ar[d]_-{\Hom_{\uB}(\underline f,X)} \ar[rr]^-{\simeq} &&\Hom_{\Gamma}(\Hom_{\uB}(\Omega R,\Omega R_0),\Hom_{\uB}(\Omega R,X))\ar[d]^-{\Hom_{\Gamma}(\Hom_{\uB}(\Omega R, \underline f),\Hom_{\uB}(\Omega R,X))}\\
\Hom_{\uB}(\Omega R_1,X) \ar[rr]^-{\simeq} &&\Hom_{\Gamma}(\Hom_{\uB}(\Omega R, \Omega R_1),\Hom_{\uB}(\Omega R,X))
}
$$
By Lemma \ref{AIR} and \ref{rigid}, $X$ is $\R$-rigid if and only if $\Hom_{\uB}(\Omega R,X)$ is $\tau$-rigid.

(b) Let $X$ be a basic $\R$-rigid object, $X=X_0\oplus R_0$ where $X_0$ has no direct summands in $\R$ and $R_0\in \R$. Let $F(X)=(\Hom_{\uB}(\Omega R,X_0),\Hom_{\uB}(\Omega R,R_0))$. By Lemma \ref{summand}, $X_0$ is $\R$-rigid, then by (a), $\Hom_{\uB}(\Omega R,X_0)$ is $\tau$-rigid. Since $X$ is $\R$-rigid, by Lemma \ref{sum}, we have $\Hom_{\uB}(\Omega R_0,X)\simeq \EE(R_0,X)=0$, and by Lemma \ref{eq}, $$\Hom_{\Gamma}(\Hom_{\uB}(\Omega R, \Omega R_0),\Hom_{\uB}(\Omega R,X))=0,$$ hence $(\Hom_{\uB}(\Omega R,X),\Hom_{\uB}(\Omega R, \Omega R_0))$ is a basic $\tau$-rigid pair. On the other hand, if we have a basic $\tau$-rigid pair $(M,P)$ of $\Gamma$, then $M\simeq \Hom_{\uB}(\Omega R,X_0)$ where $X_0$ has no direct summands in $\R$ and $P\simeq \Hom_{\uB}(\Omega R, \Omega R_0)$. By (a) $X_0$ is $\R$-rigid. Since $\Hom_{\uB}(\Omega R_0,X_0)=0$, by Lemma \ref{sum} $X_0\oplus R_0$ is a $\R$-rigid object.

(c) Let $X=X_0\oplus R_X$ where $X_0$ does not have any direct summand of $\R$. If $X$ is maximal rigid and $F(X)$ is not basic support $\tau$-tilting, then one of the following conditions must be satisfied:
\begin{itemize}
\item[(1)] There exists an indecomposable object $R'\in \add R$ which is not a direct summand of $R_X$ such that $(\Hom_{\uB}(\Omega R,X),\Hom_{\uB}(\Omega R,\Omega R_X\oplus \Omega R'))$ is a basic $\tau$-rigid pair.
\item[(2)] There exists an indecomposable non-projective object $X'$ which is not a direct summand of $X_0$ or $R$ such that $(\Hom_{\uB}(\Omega R,X\oplus X'),\Hom_{\uB}(\Omega R,\Omega R_X))$ is a basic $\tau$-rigid pair.
\end{itemize}
For condition (1), it is enough to show that $X_0\oplus R_X\oplus R'$ is $\R$-rigid.

Since $(\Hom_{\uB}(\Omega R,X_0),\Hom_{\uB}(\Omega R,\Omega R_X\oplus \Omega R'))$ is a $\tau$-rigid pair, we have $$\EE(R_X\oplus R',X_0)\simeq \Hom_{\uB}(\Omega R_X\oplus \Omega R',X_0)=0,$$ hence $\EE(R',X_0)=0$ and by Lemma \ref{sum}, $X\oplus R'$ is $\R$-rigid, a contradiction to the fact that $X$ is maximal.

By the same method we can show that condition (2) can not hold. Hence $F(X)$ is basic support $\tau$-tilting.

If $(M,P)$ is basic support $\tau$-tilting and $M\simeq \Hom_{\uB}(\Omega R,X_0)$ where $X_0$ has no direct summands in $\R$, $P\simeq \Hom_{\uB}(\Omega R, \Omega R_0)$. By (b) $X_0\oplus R_X=:X$ is $\R$-rigid, assume that $Z\in \h$ is a non-projective indecomposable object, we show that $X\oplus Z$ is $\R$-rigid implies $Z\in \add X$.

If $Z\in \add R$, then by (b) we have a basic $\tau$-rigid pair $(\Hom_{\uB}(\Omega R,X_0),\Hom_{\uB}(\Omega R,\Omega R_X\oplus \Omega Z))$. Since $(\Hom_{\uB}(\Omega R,X_0),\Hom_{\uB}(\Omega R,\Omega R_X)$ is already a basic support $\tau$-tilting pair, we have $Z\in \add R_X$. By the same method we can show the case when $Z\notin \add R$, $Z\in \add X_0$.
\end{proof}

For any object $X$, denote by $\underline {[X]}(R,R)$ the image of $[X](R,R)$ in $\underline \B$. Now we give an equivalent characterization of tilting modules.

\begin{thm}\label{main1}
For an object $X\in \h$ which does not have direct summands in $\R$, we have $$\pd_{\Gamma}\Hom_{\uB}(\Omega R, X)\leq 1~ \textrm{if and only if }~\underline {[X]}(R,R)=0.$$ Moreover, if $X$ is basic, then $\Hom_{\uB}(\Omega R, X)$ is a tilting module of $\Gamma$ if and only if $X$ is maximal $\R$-rigid with respect to $\h$ and $\underline {[X]}(R,R)=0$.
\end{thm}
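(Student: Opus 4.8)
The plan is to reduce both assertions to the minimal projective presentation of $M:=\Hom_{\uB}(\Omega R,X)$ used in the proof of Theorem \ref{main}(a). First I would record the functorial identification $\Hom_{\uB}(\Omega R,-)\cong\EE(R,-)\colon\B\to\mod\Gamma$: applying $\Hom_\B(-,Y)$ to the defining $\EE$-triangle $\Omega R\to P_R\to R\dashrightarrow$ and using $\EE(R,P_R)=0$ (which holds since $\EE(R,\mathcal P)=0$ is a standing hypothesis) yields $\Hom_{\uB}(\Omega R,Y)\cong\EE(R,Y)$, and in particular $\EE(R,\Omega R_i)\cong\Hom_\B(R,R_i)/[\mathcal P](R,R_i)$ is the indecomposable projective $\Gamma$-module attached to $\Omega R_i$, where $[\mathcal P](R,R_i)$ denotes the maps factoring through a projective. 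Since $X\in\h=\CoCone(\R,\R)$ has no direct summand in $\R$, the diagram underlying Lemma \ref{rigid} together with the proof of Theorem \ref{main}(a) supplies an $\EE$-triangle $X\xrightarrow{~u~}R_1\xrightarrow{~v~}R_0\dashrightarrow$ with $R_0,R_1\in\add R$, in which $u$ is a minimal left $\R$-approximation of $X$ (it is a left $\R$-approximation because $\EE(\R,\R)=0$ forces $\Hom_\B(R_1,Y)\to\Hom_\B(X,Y)$ to be surjective for every $Y\in\R$), and such that applying $\EE(R,-)$ to this triangle and to the $\EE$-triangles defining $\Omega R_1$ and $\Omega R_0$ realizes the minimal projective presentation of $M$ in the form
$$\EE(R,\Omega R_1)\xrightarrow{~\EE(R,\Omega v)~}\EE(R,\Omega R_0)\longrightarrow M\longrightarrow 0,$$
where under the identifications above $\EE(R,\Omega v)$ is induced by post-composition with $v$.

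Because this presentation is minimal, $\pd_\Gamma M\le1$ holds if and only if $\EE(R,\Omega v)$ is injective. Applying $\EE(R,-)$ to a syzygy $\EE$-triangle of $X\xrightarrow{u}R_1\xrightarrow{v}R_0\dashrightarrow$ yields a long exact sequence $\EE(R,\Omega X)\to\EE(R,\Omega R_1)\xrightarrow{\EE(R,\Omega v)}\EE(R,\Omega R_0)$, so $\ker\EE(R,\Omega v)$ consists precisely of the classes of the composites $R\xrightarrow{~a~}X\xrightarrow{~u~}R_1$; hence $\pd_\Gamma M\le1$ if and only if every such composite $ua$ factors through a projective. It remains to identify this last condition with $\underline{[X]}(R,R)=0$. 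If it holds and $R\xrightarrow{~c~}X'\xrightarrow{~d~}R$ is given with $X'\in\add X$, then additivity reduces us to $X'=X$, and since $u$ is a left $\R$-approximation $d$ factors as $d'u$, so $dc=d'(uc)$ factors through a projective; hence $\underline{[X]}(R,R)=0$. Conversely, given $a\colon R\to X$, write $R_1=R_1'\oplus Q$ with $R_1'\in\add R$ and $Q\in\mathcal P$ and split $R_1'=\bigoplus_jR_1^{(j)}$ into indecomposables, each a summand of $R$ via a split monomorphism $\iota_j\colon R_1^{(j)}\hookrightarrow R$; each composite $\iota_j\pi_j\,u\,a$ lies in $[X](R,R)$, hence factors through a projective, and therefore so does each component $\pi_j\,u\,a$ (the $\iota_j$ being split monic), while the $Q$-component of $ua$ factors through a projective trivially; thus $ua$ factors through a projective. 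This establishes the first assertion.

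For the ``moreover'' part, assume $X$ is basic. Since $X$ has no summand in $\R$, the pair that $\Hom_{\uB}(\Omega R,-)$ associates to $X$ in Theorem \ref{main} has trivial projective component, so Theorem \ref{main}(c) gives that $X$ is maximal $\R$-rigid with respect to $\h$ if and only if $M=\Hom_{\uB}(\Omega R,X)$ is a basic $\tau$-tilting $\Gamma$-module. Over a finite-dimensional algebra a $\tau$-tilting module is a tilting module precisely when it has projective dimension at most one (see \cite{AIR}); combining this with the first assertion gives that $M$ is a tilting $\Gamma$-module $\iff$ $M$ is $\tau$-tilting and $\pd_\Gamma M\le1$ $\iff$ $X$ is maximal $\R$-rigid with respect to $\h$ and $\underline{[X]}(R,R)=0$, as required.

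The step that requires the most care is the first paragraph: verifying that the minimal projective presentation produced in the proof of Theorem \ref{main}(a) is genuinely realized by a morphism $v\colon R_1\to R_0$ between $\add R$-objects sitting in an $\EE$-triangle $X\xrightarrow{u}R_1\xrightarrow{v}R_0\dashrightarrow$ whose $u$ is a minimal left $\R$-approximation of $X$. This is exactly where $\EE(R,\mathcal P)=0$ is used (to promote the stable map $\underline f$ of the proof of Theorem \ref{main}(a) to an honest morphism of $\add R$-objects) and where the hypothesis that $X$ has no $\R$-summand is needed (to keep the presentation minimal, and to guarantee that the third term of the triangle is $X$ itself rather than merely the part of $X$ surviving modulo $\R$). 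Once this identification is in place, the rest is the elementary factorization bookkeeping above together with the cited tilting-theoretic fact.
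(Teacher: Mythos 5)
Your overall strategy is the one the paper uses: realize the minimal projective presentation of $M=\Hom_{\uB}(\Omega R,X)$ by the $\EE$-triangle $X\xrightarrow{u}R_1\xrightarrow{v}R_0\dashrightarrow$ defining $X\in\h$, translate $\pd_\Gamma M\le 1$ into a statement about the first map of that presentation, and identify that statement with $\underline{[X]}(R,R)=0$ via the fact that $u$ is a left $\R$-approximation; the ``moreover'' part is then Theorem \ref{main}(c) plus the AIR fact that tilting $=$ $\tau$-tilting $+$ $\pd\le 1$, exactly as in the paper. Your forward direction ($\underline{[X]}(R,R)=0\Rightarrow\pd\le1$) actually supplies more detail than the paper does, in particular the reduction of $[X](R,R_1)$ to $[X](R,R)$ by splitting $R_1$ into indecomposable summands of $R$ and projectives; that bookkeeping is correct and welcome.

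The one place you genuinely diverge is the converse direction, and that is where the soft spot sits. You argue: $\pd_\Gamma M\le1$ implies the first map of the presentation is injective, hence every $\underline{ua}$ vanishes. But $\pd_\Gamma M\le1$ applied to a \emph{non-minimal} presentation only forces the kernel of the first map to be a projective direct summand, not zero; so your argument stands or falls with the claim that the presentation induced by a minimal left $\R$-approximation $u$ is a \emph{minimal} projective presentation of $M$. You flag this as the delicate step but only assert it; it does require proof (one must check that no nonzero summand of $\Hom_{\uB}(\Omega R,\Omega R_1)$, resp.\ $\Hom_{\uB}(\Omega R,\Omega R_0)$, dies under the presentation maps, which amounts to a minimality statement in $\overline{\h}$, not in $\B$). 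The paper avoids having to prove this by going the other way: it starts from an \emph{abstract} minimal length-one resolution of $M$, realizes its differential as $\Hom_{\uB}(\Omega R,\underline f)$ by Yoneda, builds from $f$ an object $X'\in\h$ with $\Hom_{\uB}(\Omega R,X')\cong M$, concludes $X'\cong X\oplus R'\oplus P'$ from the equivalence $\overline{\h}\simeq\mod\Gamma$, and then verifies $\underline{[X']}(R,R)=0$ directly. So your route is shorter but leaves a lemma to be proved; either supply that minimality lemma or replace the converse with the paper's comparison argument via $X'$.
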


\begin{proof}
$X$ admits following commutative diagram
$$\xymatrix{
\Omega X \ar[r]^s \ar[d]^q &\Omega R_1 \ar@{=}[r] \ar[d]^f  &\Omega R_1 \ar[d]^{p_1}\\
P_X \ar[r] \ar[d] &\Omega R_2 \ar[r] \ar[d]^g &P \ar[r] \ar[d] &R_2 \ar@{=}[d] \ar@{-->}[r] &\\
X \ar@{=}[r] \ar@{-->}[d] &X \ar[r]_h \ar@{-->}[d] &R_1 \ar[r] \ar@{-->}[d] &R_2 \ar@{-->}[r] &\\
& & &
}
$$
where $R_1,R_2\in \R$ and $P,P_X\in \mathcal P$.

By applying $\Hom_{\uB}(\Omega R, -)$, we get an exact sequence:
$$(\Omega R, \Omega X)\xrightarrow{\Hom_{\uB}(\Omega R, \underline s)} (\Omega R, \Omega R_1) \xrightarrow{\Hom_{\uB}(\Omega R, \underline f)} (\Omega R, \Omega R_2) \xrightarrow{\Hom_{\uB}(\Omega R, \underline g)}(\Omega R, X)\to 0$$
where we omitted $\Hom_{\uB}$ because of lack of space.

Let $a\in \Hom_{\B}(\Omega R, \Omega X)$ be any morphism. Thus we have the following commutative diagram
$$\xymatrix{
\Omega R \ar[r] \ar[d]_a &P_R \ar[r] \ar[d] &R \ar[d]^b \ar@{-->}[r] &\\
\Omega X \ar[r] \ar[d]_s &P_X \ar[r] \ar[d] &X \ar[d]^h \ar@{-->}[r] &\\
\Omega R_1 \ar[r] & P \ar[r] &R_1 \ar@{-->}[r] &\\
}
$$
If $\underline {[X]}(R,R)=0$, that is to say, $hb$ factors through $P$ which implies $sa$ factors through $P_R$, hence $\Hom_{\uB}(\Omega R, \underline s)=0$ and $\pd_{\Gamma}\Hom_{\uB}(\Omega R, X)\leq 1$.\\[2mm]
If $\pd_{\Gamma}\Hom_{\uB}(\Omega R, X)\leq 1$, then we have the following exact sequence
$$0\to \Hom_{\uB}(\Omega R, \Omega R_1) \xrightarrow{~\alpha~} \Hom_{\uB}(\Omega R, \Omega R_2) \xrightarrow{~\beta~}\Hom_{\uB}(\Omega R, X)\to 0$$
where $\alpha=\Hom_{\uB}(\Omega R, \underline f)$ by Yoneda's Lemma. Thus we have the following commutative diagram
$$\xymatrix{
\Omega R_1 \ar[r]^{p_1} \ar[d]_f &P \ar[r] \ar[d] & R_1 \ar@{=}[d] \ar@{-->}[r] &\\
\Omega R_2 \ar[r] &X' \ar[r] &R_1 \ar@{-->}[r] &
}
$$
where $X'\in \h$ and $\Hom_{\uB}(\Omega R, X)\simeq \Hom_{\uB}(\Omega R, X')$. But this also means $X=H(X)\simeq H(X')=X'$ in $\overline \h$. Since $X$ does not have direct summands in $\R$, we have $X'=X\oplus R'\oplus P'$ where $R'\in \add R$ and $P'\in \mathcal P$. To show $\underline {[X]}(R,R)=0$, it is enough to show $\underline {[X']}(R,R)=0$. We have the following commutative diagram
$$\xymatrix{
\Omega X' \ar[r]^{s'} \ar[d] &\Omega R_1 \ar@{=}[r] \ar[d]_-{\svecv{f}{p_1}}  &\Omega R_1\ar[d]^-{\svecv{p_2f}{p_1}} \\
P' \ar[d] \ar[r] &\Omega R_2\oplus P \ar[d] \ar[r]^-{\left(\begin{smallmatrix}
p_2&0\\
0&1
\end{smallmatrix}\right)} &P_2\oplus P \ar[r]_-{\svech{q_2}{0}} \ar[d] &R_2 \ar@{=}[d] \ar@{-->}[r]&\\
X' \ar@{-->}[d] \ar@{=}[r] &X' \ar[r]_{h'} \ar@{-->}[d] &R_1' \ar[r] \ar@{-->}[d] &R_2 \ar@{-->}[r] &\\
&& &
}
$$
where $P,P_2,P'\in \mathcal P$, $R_1',R_2\in \R$ and $\Hom_{\uB}(\Omega R, \underline s')=0$. Let $b':R\to X'$ be any morphism. Then we have the following commutative diagram
$$\xymatrix{
\Omega R \ar[r] \ar[d]_a &P_R \ar[r] \ar[d] &R \ar[d]^{b'} \ar@{-->}[r] &\\
\Omega X' \ar[r] \ar[d]_{s'} &P' \ar[r] \ar[d] &X' \ar[d]^{h'} \ar@{-->}[r] &\\
\Omega R_1 \ar[r] & P_2\oplus P \ar[r] &R_1' \ar@{-->}[r] &\\
}
$$
Since $s'a$ factors through $\mathcal P$, we have $h'b'$ factors through $\mathcal P$. Since $h'$ is a left $\R$-approximation of $X'$, we have $\underline {[X']}(R,R)=0$ implies $\underline {[X]}(R,R)=0$.
\medskip

If $X$ is maximal $\R$-rigid, by Theorem \ref{main}, $\Hom_{\uB}(\Omega R, X)$ is support $\tau$-tilting. $X$ does not have direct summand in $\R$ implies that $|\Hom_{\uB}(\Omega R, X)|=|X|=|\Gamma|=|R|$. This condition $\underline {[X]}(R,R)$ means $\pd_{\Gamma}\Hom_{\uB}(\Omega R, X)\leq 1$. Combine all of these, we get that $\Hom_{\uB}(\Omega R, X)$ is a tilting module.
\medskip

If $\Hom_{\uB}(\Omega R, X)$ is a tilting module, since it is also support $\tau$-tilting, by Theorem \ref{main} we know that $X$ is maximal $\R$-rigid. By definition of tilting module, we have $\pd_{\Gamma}\Hom_{\uB}(\Omega R, X)\leq 1$, hence $\underline {[X]}(R,R)=0$.
\end{proof}

Now we study the relationship between $\R$-rigid, rigid and $d$-rigid.

\begin{prop}\label{2rigid}
If $R$ is $2$-rigid, then any $\R$-rigid object $X\in \h$ is rigid.
\end{prop}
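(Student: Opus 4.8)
The goal is to prove $\EE(X,X)=0$. Fix an $\EE$-triangle $X\to I_X\xrightarrow{~i~}\Sigma X$ with $I_X\in\I$. Since $\EE(A,I_X)=0$ for all $A$, the associated long exact sequence gives, naturally in $A$, an isomorphism $\EE(A,X)\cong\Hom_\B(A,\Sigma X)/i_*\Hom_\B(A,I_X)$; thus a morphism $A\to\Sigma X$ is zero in $\EE(A,X)$ exactly when it factors through $i$. First I would observe that every morphism $X\to\Sigma X$ factoring through an object of $\R$ already factors through $i$: writing such an object as $R_0\oplus P$ with $R_0\in\add R$ and $P\in\mathcal P$, the morphism splits as a sum of one through $\add R$ and one through $\mathcal P$; the first factors through $i$ by $\R$-rigidity of $X$ (Definition \ref{d1} says exactly that a morphism $X\to\Sigma X$ through $\add R$ factors through $i$), and the second does so because $\EE(\mathcal P,X)=0$ makes $i_*\colon\Hom_\B(\mathcal P,I_X)\to\Hom_\B(\mathcal P,\Sigma X)$ surjective. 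Hence any morphism $X\to\Sigma X$ through $\R$ represents $0$ in $\EE(X,X)$.

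It therefore suffices to show that every morphism $X\to\Sigma X$ factors through $\R$. Here I would use $X\in\h=\CoCone(\R,\R)$: pick an $\EE$-triangle $X\xrightarrow{~h~}R_1\xrightarrow{~r~}R_2$ with $R_1,R_2\in\R$ and apply $\Hom_\B(-,\Sigma X)$; then $\Coker\big(h^*\colon\Hom_\B(R_1,\Sigma X)\to\Hom_\B(X,\Sigma X)\big)$ embeds into $\EE(R_2,\Sigma X)=\EE^2(R_2,X)$, so it remains to verify $\EE^2(R_2,X)=0$. Since $R_2\in\R$ and $\EE^2(\mathcal P,X)=\EE(\mathcal P,\Sigma X)=0$, this reduces to $\EE^2(R,X)=0$; and this is where $2$-rigidity of $R$ is used: applying the long exact sequence of an $\EE$-triangle $X\to R'\to R''$ witnessing $X\in\h$ (see \cite{LN}) yields $\EE^1(R,R'')\to\EE^2(R,X)\to\EE^2(R,R')$, whose first term vanishes by rigidity of $R$ together with $\EE(R,\mathcal P)=0$, and whose last term vanishes by $2$-rigidity of $R$. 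Consequently $h^*$ is surjective, every morphism $X\to\Sigma X$ factors through $h$, hence through $R_1\in\R$, hence represents $0$ in $\EE(X,X)$; so $\EE(X,X)=0$, i.e. $X$ is rigid.

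The step I expect to be the main obstacle is the vanishing $\EE^2(R,X)=0$, and more precisely the bookkeeping of the projective direct summands of the $\R$-objects in an $\EE$-triangle for $X\in\h$: the argument is immediate when such a triangle can be chosen with both outer terms in $\add R$, but if one only has $R',R''\in\R$, the projective summand of the middle term $R'$ contributes an $\EE^2(R,\mathcal P)$-term, whereas only $\EE^1(R,\mathcal P)=0$ is assumed. One must either strip these summands off (using $\EE(\mathcal P,-)=0$ to split the projective parts of the end terms and to replace the middle term by a direct summand of the original, thereby reducing to the $\add R$-case) or argue directly that the offending $\EE^2(R,\mathcal P)$-contribution dies. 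A minor additional point is to confirm that the isomorphism $\EE(A,X)\cong\Hom_\B(A,\Sigma X)/i_*\Hom_\B(A,I_X)$ is natural in $A$, so that surjectivity of $h^*$ on $\Hom$-groups genuinely forces $\EE(X,X)=0$.
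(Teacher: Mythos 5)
Your proof is correct and follows essentially the same route as the paper's: both hinge on deducing $\EE^2(R,X)=0$ (hence $\EE^2(R_2,X)=0$) from $2$-rigidity via the long exact sequences of \cite{LN}, and then use $\R$-rigidity of $X$ to kill the resulting self-extensions; the paper merely phrases the final step with $\EE$-triangles and a splitting argument rather than with your quotient description of $\EE(-,X)$ as $\Hom_{\B}(-,\Sigma X)$ modulo morphisms factoring through $i$. The obstacle you flag about projective direct summands does not arise, because the paper's proof (consistent with the definition of $\h$ given in the introduction) takes the defining $\EE$-triangle of $X\in\h$ with both end terms in $\add R$, so the potentially troublesome $\EE^2(R,\mathcal P)$-term never appears.
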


\begin{proof}
Assume that $X$ is any $\R$-rigid object, it admits an $\EE$-triangle $\xymatrix{X \ar[r] ^h &R_1 \ar[r]^j &R_2 \ar@{-->}[r] &}$ where $R_1,R_2\in \add R$. If $R$ is $2$-rigid, we get a short exact sequence $0=\EE(R,R_2)\to \EE^2(R,X) \to \EE^2(R,R_1)=0$ which implies $\EE^2(R,X)=0$. Then we have a short exact sequence  $$\EE(R_1,X) \xrightarrow{\EE(f,X)} \EE(X,X)\to \EE^2(R_2,X)=0.$$
This implies that any $\EE$-triangle $\xymatrix{X\ar[r]^x &Y\ar[r] &X\ar@{-->}[r] &}$ admits the following commutative diagram
$$\xymatrix{
X \ar[r]^x \ar@{=}[d] &Y \ar[r] \ar[d] &X \ar@{-->}[r] \ar[d]^h &\\
X \ar[r] \ar@{=}[d] &Z \ar[r] \ar[d] &R_1 \ar@{-->}[r] \ar[d]^a &\\
X \ar[r] &I_X \ar[r]_i &\Sigma X \ar@{-->}[r] &
}
$$
Since $X$ is $\R$-rigid, we have that $ah$ factors through $i$, hence $1_X$ factors through $x$. It follows that the $\EE$-triangle $\xymatrix{X\ar[r]^x &Y\ar[r] &X\ar@{-->}[r] &}$ splits. Thus $\EE(X,X)=0$ and then $X$ is rigid.
\end{proof}

\begin{thm}
Let $X\in \h$. If $R$ is $d$-rigid, $d\geq 3$, then the following statements are equivalent:
\begin{itemize}
\item[(a)] $X$ is $\R$-rigid;
\item[(b)] $X$ is rigid;
\item[(c)] $X$ is $(d-1)$-rigid.
\end{itemize}
\end{thm}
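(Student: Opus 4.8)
The plan is to establish the two equivalences $(a)\Leftrightarrow(b)$ and $(b)\Leftrightarrow(c)$ separately; since $d\geq 3$, two of the four nontrivial implications are essentially free, and the work concentrates in the other two. For $(a)\Rightarrow(b)$ one simply invokes Proposition \ref{2rigid}: being $d$-rigid with $d\geq 3$, $R$ is in particular $2$-rigid, so every $\R$-rigid object of $\h$ is rigid. And $(c)\Rightarrow(b)$ is trivial, because a $(d-1)$-rigid object already satisfies $\EE^{1}(X,X)=0$ (as $d-1\geq 1$).

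For $(b)\Rightarrow(a)$ I would argue directly from the definition of $\R$-rigidity. Suppose $\EE(X,X)=0$ and consider the $\EE$-triangle $X\to I_X\xrightarrow{~i~}\Sigma X\dashrightarrow$ defining $\Sigma X$, with $I_X\in\mathcal I$. Applying $\Hom_{\B}(X,-)$, the piece $\Hom_{\B}(X,I_X)\xrightarrow{i_{*}}\Hom_{\B}(X,\Sigma X)\to\EE(X,X)$ is exact, and since the target vanishes, $i_{*}$ is surjective: every morphism $X\to\Sigma X$ factors through $i$. In particular, given any $\alpha\in[R](X,\Sigma X)$, written $\alpha\colon X\xrightarrow{~h~}R_0\xrightarrow{~j~}\Sigma X$ with $R_0\in\add R$, a factorization $\alpha=i\beta$ supplies exactly the commutative square required by the definition of $\overline{[R]}(X,\Sigma X)$, so $\alpha\in\overline{[R]}(X,\Sigma X)$. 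Since $\overline{[R]}(X,\Sigma X)\subseteq[R](X,\Sigma X)$ always holds, we get equality, i.e.\ $X$ is $\R$-rigid. Together with $(a)\Rightarrow(b)$ this yields $(a)\Leftrightarrow(b)$.

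For $(b)\Rightarrow(c)$, the key observation is that, once $X\in\h$ and $R$ is $d$-rigid, the intermediate groups $\EE^{i}(X,X)$ for $2\leq i\leq d-1$ vanish \emph{automatically}, so rigidity of $X$ already upgrades to $(d-1)$-rigidity. To see this, fix an $\EE$-triangle $X\xrightarrow{~h~}R_1\xrightarrow{~j~}R_2\dashrightarrow$ witnessing $X\in\h$, which we may take with $R_1,R_2\in\add R$ (as in the description of $\h$ in the introduction). Applying the long exact sequences of \cite[Proposition 5.2]{LN} with $\EE^{i}(R,-)$ and using $\EE^{i}(R,R_1)=\EE^{i}(R,R_2)=0$ for $1\leq i\leq d$ (which follows from $d$-rigidity of $R$ since $R_1,R_2\in\add R$), one gets $\EE^{i}(R,X)=0$ for $2\leq i\leq d$, hence $\EE^{i}(R_1,X)=\EE^{i}(R_2,X)=0$ for $2\leq i\leq d$ (being summands of powers of $\EE^{i}(R,X)$). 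Now applying the long exact sequence with $\EE^{i}(-,X)$ to the same $\EE$-triangle, the exact piece $\EE^{i}(R_1,X)\to\EE^{i}(X,X)\to\EE^{i+1}(R_2,X)$ has both ends zero for $2\leq i\leq d-1$, so $\EE^{i}(X,X)=0$ in that range. Combined with $\EE^{1}(X,X)=0$ from hypothesis $(b)$, this shows $X$ is $(d-1)$-rigid; hence $(b)\Leftrightarrow(c)$, and the theorem follows.

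I expect the main obstacle to be the index bookkeeping in these two long exact sequences — getting the two ranges $\EE^{i}(R,X)=0$ for $2\leq i\leq d$ and $\EE^{i}(X,X)=0$ for $2\leq i\leq d-1$ exactly right — together with the reduction to an $\add R$-coresolution of $X$, so that the $d$-rigidity of $R$ alone controls $\EE^{\bullet}(R,R_1)$ and $\EE^{\bullet}(R,R_2)$ without requiring higher vanishing of $\EE^{\bullet}(R,\mathcal P)$. Once these are in place the rest is purely formal, and the hypothesis $d\geq 3$ serves only to make statement $(c)$ — $(d-1)$-rigidity with $d-1\geq 2$ — genuinely stronger than plain rigidity.
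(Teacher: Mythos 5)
Your proposal is correct and follows essentially the same route as the paper: Proposition \ref{2rigid} for (a)$\Rightarrow$(b), the observation that the remaining downward implications are trivial, and for (b)$\Rightarrow$(c) the same two long exact sequences (first $\EE^i(R,-)$ applied to an $\EE$-triangle $X\to R_1\to R_2$ with $R_1,R_2\in\add R$ to get $\EE^i(R,X)=0$ for $2\le i\le d$, then $\EE^i(-,X)$ to get $\EE^i(X,X)=0$ for $2\le i\le d-1$), with identical index ranges. The only cosmetic difference is that you spell out the easy implication rigid $\Rightarrow$ $\R$-rigid by factoring morphisms $X\to\Sigma X$ through $I_X$, which the paper leaves implicit in its assertion that (a) and (b) are equivalent.
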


\begin{proof}
By Proposition \ref{2rigid}, (a) and (b) are equivalent. (c) implies (a) is trivial. We show that if $\R$ is $d$-rigid, then $\EE^i(X,X)=0$, $i=2,3,\cdots,d-1$, hence (b) implies (c).

By the same method as in Proposition \ref{2rigid}, we get $0=\EE^{i-1}(R,R_2)\to \EE^i(R,X) \to \EE^i(R,R_1)=0$ which implies $\EE^i(R,X)=0$, $i=2,3,\cdots,d$. Thus we have short exact sequences $$0=\EE^i(R_1,X) \to \EE^i(X,X)\to \EE^{i+1}(R_2,X)=0,$$ hence $\EE^i(X,X)=0$, $i=2,3,\cdots,d-1$.
\end{proof}

\section{A partial order on relative maximal rigid objects}


Let $M\in \h$, we say $X\in M*[R]$ if $X$ admits the following commutative diagram:

$$\xymatrix{
M_X \ar[r]^i \ar[d]_m &I_M \ar[d]^{i'} \ar[r] &\Sigma M_X \ar@{=}[d] \ar@{-->}[r] & \ar@{}[d]^{(\lozenge)}\\
X \ar[r]_{\eta_X} \ar[r] &C_X \ar[r] &\Sigma M_X \ar@{-->}[r] &
}
$$
where $M_X\in \add M$, $I_M\in \mathcal I$ and $\eta_X$ factors through $\R^{\bot_1}$.

The following remark is useful, the proof is left to the readers.

\begin{rem}
The subcategory $M*[R]$ is closed under direct sums and direct summands.
\end{rem}

\begin{lem}\label{R}
If $X\in \h$ and $X\in M*[R]$, then in the diagram $(\lozenge)$, $\eta_X$ factors through $\R$.
\end{lem}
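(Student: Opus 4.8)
*If $X\in\h$ and $X\in M*[R]$, then in the diagram $(\lozenge)$, $\eta_X$ factors through $\R$.*

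The plan is to exploit the hypothesis $X \in \h = \CoCone(\R, \R)$ together with the defining diagram $(\lozenge)$ of $M*[R]$, whose salient feature is that $\eta_X$ factors through $\R^{\bot_1} = \{Y \in \B \mid \Ext^1(\R, Y) = 0\}$. So write $\eta_X$ as $X \xrightarrow{u} W \xrightarrow{v} C_X$ with $W \in \R^{\bot_1}$. Since $X \in \h$, fix an $\EE$-triangle $X \xrightarrow{h} R_1 \xrightarrow{k} R_2 \dashrightarrow$ with $R_1, R_2 \in \R$. The first step is to produce a factorization of $u\colon X \to W$ through $h$: applying $\Hom_\B(-, W)$ to this $\EE$-triangle gives an exact sequence $\Hom_\B(R_1, W) \xrightarrow{h^*} \Hom_\B(X, W) \to \EE(R_2, W)$, and $\EE(R_2, W) = 0$ because $R_2 \in \R$ and $W \in \R^{\bot_1}$. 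Hence $u = h' h$ for some $h'\colon R_1 \to W$, and therefore $\eta_X = v u = (v h') h$ factors through $h$, hence through $R_1 \in \R$.

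At this point $\eta_X$ factors through the single object $R_1 \in \add R \subseteq \R$, which is precisely the assertion. The only subtlety is to be careful about what ``factors through $\R$'' means here — it is factorization through some object of $\R$, i.e. through $[R]$ enlarged by projectives, and the factorization $\eta_X = (vh')h$ with $R_1 \in \R$ delivers exactly this. One should also double-check that the $\EE$-triangle witnessing $X \in \h$ can indeed be taken with $R_1, R_2 \in \R$ rather than merely in $\add R$; by the definition of $\h$ in the text ($\h = \CoCone(\R,\R)$, and earlier $\h = \{X \mid \exists\ \EE\text{-triangle } X \to R' \to R'',\ R',R'' \in \add R\}$) both formulations are available, and $\add R \subseteq \R$ makes the argument go through verbatim.

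I do not expect a genuine obstacle here: the proof is a one-line diagram chase once one observes that $\EE(\R, \R^{\bot_1}) = 0$ by definition of $\R^{\bot_1}$, which converts the $\R^{\bot_1}$-factorization of $\eta_X$ into an $\R$-factorization using the deflation $h$ coming from $X \in \h$. The mild bookkeeping point — and the thing to state cleanly — is the vanishing $\EE(R_2, W) = 0$ and the resulting lift of $u$ along $h$; everything else is formal.
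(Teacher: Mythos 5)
Your proof is correct, but it is genuinely different from the one in the paper. You work from the $\EE$-triangle $X\xrightarrow{h}R_1\to R_2\dashrightarrow$ witnessing $X\in\h$ and observe that the long exact sequence $\Hom_\B(R_1,W)\xrightarrow{h^*}\Hom_\B(X,W)\to\EE(R_2,W)=0$ (valid since $W\in\R^{\bot_1}$ and $R_2\in\R$) lifts the $\R^{\bot_1}$-factorization $X\xrightarrow{u}W\xrightarrow{v}C_X$ of $\eta_X$ along $h$, so that $\eta_X=(vh')h$ factors through $R_1\in\R$. The paper instead starts from the diagram $(\lozenge)$ itself: it extracts an epimorphism $M_X\xrightarrow{\overline m}H(X)$ in $\overline\h$, invokes \cite[Corollary 2.26]{LN} to complete $m$ to a morphism of $\EE$-triangles with cocones in $\R$, uses injectivity of $I_M$ to factor $i$ through the inflation $M_X\to R_1$, and then uses the (weak) pushout property of the resulting square to produce the factorization of $\eta_X$ through $R_0\in\R$. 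Your argument is shorter and avoids both the functor $H$ and \cite[Corollary 2.26]{LN}, at the cost of using the defining triangle of $X\in\h$ rather than the approximation $m$; both yield a factorization through an object of $\R$, just through different objects ($R_1$ from the $\h$-triangle for you, $R_0$ from the completed square for the paper). Your side remarks are also handled correctly: $\add R\subseteq\R$ reconciles the two descriptions of $\h$, and the vanishing $\EE(R_2,W)=0$ is exactly the definition of $\R^{\bot_1}$.
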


\begin{proof}
Since $X\in \h$ and $X\in M*[R]$, we get an epimorphism $M_X \xrightarrow{\overline m} X\to 0$ in $\overline \h$. According to \cite[Corollary 2.26]{LN}, $m$ admits the following commutative diagram
$$\xymatrix{
M_X \ar[r]^r \ar[d]_m &R_1 \ar[r] \ar[d] &R_2 \ar@{=}[d] \ar@{-->}[r] & \\
X \ar[r] &R_0 \ar[r]  &R_2 \ar@{-->}[r] &
}
$$
where $R_0,R_1,R_2\in \R$. There exists a morphism $r':R_1\to I_M$ such that $r'r=i$. Then we have the following commutative diagram
$$\xymatrix{
M_X \ar[r]^r \ar[d]_m &R_1 \ar[d]  \ar[rdd]^{i'r'} \\
X \ar[r] \ar[rrd]_{\eta_X} &R_0 \ar@{.>}[rd] \\
&&C_X
}
$$
which shows $\eta_X$ factors through $\R$.
\end{proof}

\begin{defn}\label{order}
Let $M,N$ be maximal $\R$-rigid. We denote $M \geq N$ if $M*[R]\supseteq N*[R]$.
\end{defn}

\begin{thm}\label{partial}
The relation $\geq$ we defined is a partial order.
\end{thm}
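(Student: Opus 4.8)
The plan is to verify the three defining properties of a partial order: reflexivity, antisymmetry, and transitivity for the relation $\geq$ on maximal $\R$-rigid objects given by $M\geq N \iff M*[R]\supseteq N*[R]$.

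Reflexivity and transitivity are immediate from the definition, since $\supseteq$ is reflexive and transitive on subcategories; I would dispatch these in one line. The real content is antisymmetry: I must show that if $M*[R]=N*[R]$ for maximal $\R$-rigid objects $M,N\in\h$, then $M\cong N$ (up to isomorphism, i.e., $\add M=\add N$). The first step is to observe that $M\in M*[R]$ always — this follows by taking the $\EE$-triangle defining $\Sigma M$ with $I_M\in\I$ and choosing $\eta_M$ to be (essentially) the zero map into an injective, so $C_X$ can be taken in $\R^{\bot_1}$; more carefully, $M\in M*[R]$ because one can use the trivial diagram with $M_X=M$, $C_X=I_M$ and $\eta_M$ the inflation $M\to I_M$, which factors through $\I\subseteq\R^{\bot_1}$. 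Hence $M*[R]=N*[R]$ forces $M\in N*[R]$ and $N\in M*[R]$.

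Next, using Lemma \ref{R}: since $M\in\h$ and $M\in N*[R]$, the map $\eta_M\colon M\to C_M$ in the diagram $(\lozenge)$ factors through $\R$, and from the $\EE$-triangle $N_M\to I_N\to \Sigma N_M$ realizing the situation, the composite $M\xrightarrow{n} ?$ — here I would extract that $M$ is a direct summand of an object built from $N_M\in\add N$ together with objects in $\R$, i.e., $M$ lies in $\add(N)\vee\R = \add(N\oplus R)\vee\mathcal P$ after quotienting appropriately. Combined with $\EE$-rigidity: the point is that $M\oplus N$ should be shown to be $\R$-rigid, because $M$ sits inside the "extension closure" of $N$ and $\R$ and $N$ is maximal $\R$-rigid, forcing the indecomposable non-projective summands of $M$ to lie in $\add N$; symmetrically the summands of $N$ lie in $\add M$, giving $\add M=\add N$. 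So the key lemma to establish along the way is: $X\in N*[R]$ with $X\in\h$ implies $N\oplus X$ is $\R$-rigid (or at least that every indecomposable non-projective summand of $X$ is a summand of $N$, using maximality of $N$).

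The main obstacle I expect is precisely this last implication — translating membership $M\in N*[R]$ into an $\R$-rigidity statement $\EE(N,M)=0$ (equivalently $\Hom_{\uB}(\Omega N, M)=0$ after the equivalence $G$), then invoking maximality. This requires carefully chasing the two stacked $\EE$-triangles in the definition of $M*[R]$ together with the factorization through $\R$ from Lemma \ref{R}, using that $\eta_M$ factors through $\R^{\bot_1}$ and through $\R$ simultaneously, and that $R$ is rigid with $\EE(R,\mathcal P)=0$; the functor $H$ and its properties (in particular $H(f)=0$ iff $f$ factors through $\R^{\bot_1}$, and right-exactness of $H$ on $\EE$-triangles) are the natural tools. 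Once the rigidity of $M\oplus N$ is in hand, Definition \ref{d2} (maximality) applied in both directions closes the argument, since any indecomposable non-projective summand of $M$ lies in $\h$ (as $M\in\h$ and $\h$ is closed under summands by construction of $\CoCone(\R,\R)$ together with the rigid hypotheses) and hence must be in $\add N$, and vice versa, yielding $M\cong N$.
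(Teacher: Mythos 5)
Your reduction is the same as the paper's: reflexivity and transitivity are indeed trivial, the only content is antisymmetry, and both you and the paper reduce it to showing that $M\oplus N$ is $\R$-rigid, after which maximality (Definition \ref{d2}) gives $M=M\oplus N=N$ in one line. Your preliminary observation that $M\in M*[R]$ (take $M_X=M$, $C_X=I_M$ and $\eta_M$ the inflation into $I_M\in\I\subseteq\R^{\bot_1}$) is correct and is the implicit step that converts $M*[R]=N*[R]$ into the two memberships $N\in M*[R]$ and $M\in N*[R]$.

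The problem is that the central step --- deducing from these two memberships that $M\oplus N$ is $\R$-rigid --- is precisely the part you do not carry out, and you flag it yourself as ``the main obstacle I expect.'' Since $M$ and $N$ are separately $\R$-rigid, what must be checked are the two cross terms, and the paper does this by an explicit diagram chase: given a morphism $M\xrightarrow{g_1}R\xrightarrow{g_2}\Sigma N$, lift it to $\Omega M\xrightarrow{f_1}\Omega R\xrightarrow{f_2}N$ along the $\EE$-triangles $\Omega M\to P_M\to M$, $\Omega R\to P_R\to R$, $N\to I_N\to\Sigma N$; because $\eta_N$ factors through $\R^{\bot_1}$ one gets $p_1\colon P_R\to C_N$ with $\eta_N f_2=p_1p_R$, which lets one write $f_2=ba+p_2p_R$ with $a\colon\Omega R\to M_N$ and $b=m\colon M_N\to N$, whence $f_2f_1$ factors through $p_M$ and, by Remark \ref{useful}, $g_2g_1$ factors through $i_N$; the other cross term uses $M\in N*[R]$ dually. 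None of this appears in your proposal. Moreover, the route you gesture at is not right: membership $M\in N*[R]$ does \emph{not} make $M$ ``a direct summand of an object built from $N_M$ and $\R$''; by Lemma \ref{Fac} it only gives an epimorphism from $\add N$ onto $H(M)$ in $\oB$, which is a strictly weaker statement. Finally, your proposed key lemma in its one-sided form ($X\in N*[R]$ alone implies $N\oplus X$ is $\R$-rigid) would at best control one of the two cross terms, so even granting it you would still need both memberships and both halves of the argument. In short: right skeleton, right ingredients named, but the actual proof of the $\R$-rigidity of $M\oplus N$ --- the heart of the theorem --- is missing.
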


\begin{proof}
We only need to show that if $M\geq N$ and $N\geq M$, then $M=N$. To show this, we only need to prove $M\oplus N$ is $\R$-rigid. Since $M$ and $N$ are maximal, if $M\oplus N$ is $\R$-rigid, we have $M=N\oplus M=N$.\\
Now assume $M*[R]\subseteq N*[R]$ and $N*[R]\subseteq M*[R]$. Then $N\in M*[R]$. By definition we have the following commutative diagram
$$\xymatrix{
M_N \ar[r] \ar[d] &I_M \ar[d] \ar[r] &\Sigma M_N \ar@{=}[d] \ar@{-->}[r] &\\
N \ar[r]_{\eta_N}  &C_N \ar[r] &\Sigma M_N \ar@{-->}[r] &
}
$$
where $M_N\in \add M$, $I_M\in \mathcal I$ and $\eta_N$ factors through $\R^{\bot_1}$. We check that in the following commutative diagram
$$\xymatrix{
\Omega M \ar[r]^{p_M} \ar[d]_{f_1} &P_M \ar[d] \ar[r] &M \ar[d]^{g_1} \ar@{-->}[r] &\\
\Omega R \ar[r]^{p_R} \ar[d]_{f_2} &P_R \ar[r] \ar[d] &R \ar[d]^{g_2} \ar@{-->}[r] &\\
N \ar[r] &I_N \ar[r]_{i_N} &\Sigma N \ar@{-->}[r] &
}
$$
where $P_R,P_M\in \mathcal P$, $I\in \mathcal I$, the morphism $g_2g_1$ factors through $i_N$. It is enough to show $f_2f_1$ factors through $p_M$. In fact, since $\eta_N$ factors through $\R^{\bot_1}$, there exists a morphism $p_1:P_R\to C_N$ such that $\eta_Nf_2=p_1p_R$. Then we have the following diagram such that all the squares are commutative:
$$\xymatrix{
\Omega R \ar[r]^{p_R} \ar[rdd]_{f_2} \ar@{.>}[rd]^a &P_R \ar@{.>}[rdd] \ar[r] \ar@{.>}[rd] &R \ar@{.>}[rd]^r \ar@{.>}[rdd] \ar@{-->}[r] &\\
&M_N \ar[d]^b \ar[r]  &I_M \ar[d] \ar[r] &\Sigma M_N \ar@{=}[d] \ar@{-->}[r] &\\
&N \ar[r]_{\eta_N}  &C_N \ar[r]  &\Sigma M_N  \ar@{-->}[r] &
}$$
We have that $ba$ factors through $p_R$. There is a morphism $p_2:P_R\to N$ such that $f_2-ba=p_2p_R$. Hence $f_2f_1=baf_1+p_2p_Rf_1$ factors though $p_M$.

The dual of this statement can be shown under this condition $M\in N*[R]$.
\end{proof}

\begin{lem}\label{Fac}
Let $X\in \B$ and $M\in \h$. Then $X\in M*[R]$ if and only if $H(X)\in \Fac M$ in $\overline \h$.
\end{lem}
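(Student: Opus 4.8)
The plan is to translate the defining diagram $(\lozenge)$ of $M*[R]$ through the cohomological functor $H$ and to recognize the resulting data as precisely a right $\add M$-approximation witnessing $H(X)\in\Fac M$ inside $\overline\h$. The key point is that $H$ kills exactly the maps factoring through $\R^{\bot_1}$ (property (1) of $H$) and turns $\EE$-triangles into three-term exact sequences (property (2)), and that via the equivalence $G\colon\overline\h\xrightarrow{\simeq}\Gamma$ the category $\overline\h$ behaves like a module category, so $\Fac M$ makes sense there.

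First, for the ``only if'' direction, suppose $X\in M*[R]$, realized by a diagram $(\lozenge)$ with $M_X\in\add M$, $I_M\in\mathcal I$ and $\eta_X$ factoring through $\R^{\bot_1}$. By Lemma \ref{R}, since $X\in\h$ we may in fact take $\eta_X$ to factor through $\R$; more importantly, applying $H$ to the lower $\EE$-triangle $X\xrightarrow{\eta_X}C_X\to\Sigma M_X\dashrightarrow$ and to the morphism of $\EE$-triangles in $(\lozenge)$ yields an exact sequence $H(M_X)\xrightarrow{H(m)}H(X)\xrightarrow{H(\eta_X)}H(C_X)$; but $H(\eta_X)=0$ because $\eta_X$ factors through $\R^{\bot_1}$, so $H(m)\colon H(M_X)\to H(X)$ is an epimorphism in $\Gamma$. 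Since $H(M_X)=M_X\in\add M$ (as $M_X\in\h$) and $H$ is identity on $\h$, this says exactly that $H(X)=X$ is a quotient of an object of $\add M$ in $\overline\h$, i.e.\ $H(X)\in\Fac M$.

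Conversely, suppose $H(X)\in\Fac M$ in $\overline\h$, so there is an epimorphism $\overline m\colon M_0\ta H(X)$ in $\overline\h$ with $M_0\in\add M$. Lift $\overline m$ to a morphism $m\colon M_0\to X$ in $\B$ (possibly after replacing $M_0$ by $M_0\oplus R'$ with $R'\in\R$, harmlessly absorbed by the $[R]$-part) and complete $m$ to an $\EE$-triangle; the obstruction to recognizing this as a $(\lozenge)$-diagram is that the ``cokernel'' of $m$ need not be of the form $\Sigma M_0$ with the correct $\mathcal I$-triangle on top. To fix this I would use the construction underlying the definition of $\Sigma$ together with (ET4): embed $M_0$ into an injective $I_M$ via $M_0\to I_M\to \Sigma M_0\dashrightarrow$, form the pushout/rotation producing a morphism of $\EE$-triangles with bottom row $X\xrightarrow{\eta_X}C_X\to\Sigma M_0\dashrightarrow$, and then check that $H(\eta_X)=0$ so that $\eta_X$ factors through $\R^{\bot_1}$ (using property (1) of $H$ in the reverse direction, i.e.\ $H(f)=0$ iff $f$ factors through $\R^{\bot_1}$). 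That $H(\eta_X)=0$ follows because $H(m)=\overline m$ is epic and $H$ applied to the triangle gives exactness $H(M_0)\xrightarrow{H(m)}H(X)\xrightarrow{H(\eta_X)}H(C_X)$, forcing $H(\eta_X)=0$.

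The main obstacle I anticipate is the bookkeeping in this converse direction: lifting the epimorphism $\overline m$ from $\overline\h$ to an honest morphism in $\B$ and arranging the top row of the resulting diagram to be genuinely an $\mathcal I$-triangle $M_X\to I_M\to\Sigma M_X$, rather than just some $\EE$-triangle. This is exactly the kind of manipulation handled by (ET4) and by \cite[Corollary 2.26]{LN} (already invoked in the proof of Lemma \ref{R}), so I expect it to go through, but it requires care to ensure that the indeterminacy coming from morphisms factoring through $\R$ or through $\mathcal P$ does not disturb membership in $M*[R]$ — which is precisely what the preceding Remark (closure of $M*[R]$ under direct sums and summands) is there to absorb.
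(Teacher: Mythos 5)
Your proposal follows essentially the same route as the paper: both directions come down to applying the cohomological functor $H$ to the diagram $(\lozenge)$, using that $H$ kills $I_M$ and exactly the maps factoring through $\R^{\bot_1}$, so that $H(m)$ is epic iff $H(\eta_X)=0$. The one step you leave as a flagged obstacle --- lifting the epimorphism $\overline m\colon M_0\to H(X)$ to a morphism $M_0\to X$ in $\B$ --- is resolved in the paper not by (ET4) gymnastics but by composing with the canonical comparison $\EE$-triangle between $X$ and $H(X)$ from \cite{LN}, whose $\R$-part is annihilated by $H$; with that substitution your argument is the paper's proof.
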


\begin{proof}
If $X\in M*[R]$, then we have a commutative diagram
$$\xymatrix{
M_X \ar[r] \ar[d] &I_M \ar[d] \ar[r] &\Sigma M_X \ar@{=}[d] \ar@{-->}[r] &\\
X \ar[r]_{\eta_X} \ar[r] &C_X \ar[r] &\Sigma M_X \ar@{-->}[r] &
}
$$
where $\eta_X$ factors through $\R^{\bot_1}$. Thus we get an epimorphism $M_X\to H(X)\to 0$ in $\overline \h$. It folllows that $H(X)\in \Fac M$.

If we have an epimorphism $\overline f:M^n\to H(X)\to 0$, since $X$ admits an $\EE$-triangle $$\xymatrix{R_X \ar[r]^{r\quad} &H(X) \ar[r] &X\ar@{-->}[r] &}$$ where $R_X\in \R$ and $H(r)$ is an isomorphism in $\overline \h$, then we have a commutative diagram
$$\xymatrix{
M^n \ar[r] \ar[d]_{rf} &I \ar[d] \ar[r] &\Sigma M^n \ar@{=}[d] \ar@{-->}[r] &\\
X \ar[r]_{\eta_X} \ar[r] &C_X \ar[r] &\Sigma M^n \ar@{-->}[r] &
}
$$
where $I\in \mathcal I$. We get an exact sequence $M^n \xrightarrow{H(r)\overline f} H(X) \xrightarrow{ H(\eta_X)} C_X$ in $\oB$. Since $H(r)\overline f$ is epic, we have $H( \eta_X)=0$, which implies $\eta_X$ factors through $\R^{\bot_1}$.
\end{proof}

According to this lemma, we have the following proposition.

\begin{prop}\label{mn}
Let $M,N$ be two objects in $\h$. Then $M*[R]\subseteq N*[R]$ if and only if $\Fac M\subseteq \Fac N$ in $\overline \h$.
\end{prop}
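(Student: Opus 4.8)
The plan is to deduce both implications directly from Lemma \ref{Fac}, which identifies membership $X \in M*[R]$ with membership $H(X) \in \Fac M$ in $\overline{\h}$, together with the fact that $\overline{\h}$ is, via $G$, the abelian category $\mod\Gamma$, so that $\Fac N$ is closed under quotient objects and finite direct sums.

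First I would record the elementary observation that $M \in M*[R]$ for every $M \in \h$: indeed $H(M) = M$ and the identity morphism of $M$ is an epimorphism in $\overline{\h}$, so $H(M) \in \Fac M$, whence $M \in M*[R]$ by Lemma \ref{Fac}. Now assume $M*[R] \subseteq N*[R]$. Then $M \in N*[R]$, so Lemma \ref{Fac} applied with $N$ in place of $M$ gives $M = H(M) \in \Fac N$ in $\overline{\h}$. Since $\Fac N$ is closed under finite direct sums, $M^{k} \in \Fac N$ for all $k$; since it is closed under quotients, every direct summand of some $M^{k}$, and then every epimorphic image in $\overline{\h}$ of such a summand, lies in $\Fac N$. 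This is precisely the inclusion $\Fac M \subseteq \Fac N$.

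Conversely, assume $\Fac M \subseteq \Fac N$ in $\overline{\h}$, and let $X \in M*[R]$. By Lemma \ref{Fac} we have $H(X) \in \Fac M$, hence $H(X) \in \Fac N$; applying Lemma \ref{Fac} once more, now with $N$ in place of $M$, yields $X \in N*[R]$. Thus $M*[R] \subseteq N*[R]$, which completes the equivalence.

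The argument has no genuinely hard step; it is essentially a reformulation of Lemma \ref{Fac}. The only points that need a moment's attention are the closure of $\Fac N$ under quotients and direct sums, which is automatic since $\overline{\h} \simeq \mod\Gamma$ is abelian, and the observation that $M$ itself belongs to $M*[R]$, so that the hypothesis $M*[R] \subseteq N*[R]$ can be fed into Lemma \ref{Fac} to start the forward direction.
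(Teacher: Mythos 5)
Your proof is correct, and it rests on the same pivot as the paper's, namely Lemma \ref{Fac}; the difference is in how much work is done after that. You derive both implications purely formally: $M\in M*[R]$ (via $H(M)=M\in\Fac M$) turns the hypothesis $M*[R]\subseteq N*[R]$ into $M\in\Fac N$, and then the closure of $\Fac N$ under finite direct sums and quotients in the abelian category $\overline{\h}\simeq\mod\Gamma$ finishes the forward direction, while the converse is just two applications of Lemma \ref{Fac}. The paper instead reduces to the single implication ``$M\in N*[R]$ implies $M*[R]\subseteq N*[R]$'' and proves it by an explicit diagram chase: given the defining diagram of $X\in M*[R]$ and one exhibiting $M_X\in N*[R]$, it splices them and verifies that the resulting $\eta_X'$ factors through $\R^{\bot_1}$ by checking $H(\eta_X')=0$ from the surjectivity of $H(m)\overline{n}$. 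That chase is essentially a re-derivation, in this special case, of the transitivity your argument gets for free from the closure properties of $\Fac N$; your version is shorter and makes the logical dependence on Lemma \ref{Fac} cleaner, at the cost of not displaying the explicit $\EE$-triangle witnessing $X\in N*[R]$ (which the paper's construction does produce and which is in the spirit of the later mutation arguments). Both proofs are complete; no gap in yours.
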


\begin{proof}
It is enough to show if $M\in N*[R]$, then $M*[R]\subseteq N*[R]$.\\
Assume $X\in M*[R]$, by definition it admits a commutative diagram
$$\xymatrix{
M_X \ar[r] \ar[d]_m &I_M \ar[d] \ar[r] &\Sigma M_X \ar@{=}[d] \ar@{-->}[r] &\\
X \ar[r]_{\eta_X} \ar[r] &C_X \ar[r] &\Sigma M_X \ar@{-->}[r] &
}
$$
where $M_X\in \add M$, $I_M\in \mathcal I$ and $\eta_X$ factors through $\R^{\bot_1}$. 
Since $M\in N*[R]$ and $N*[R]$ is closed under direct sums and direct summands, then $M_X$ also admits a commutative diagram
$$\xymatrix{
N' \ar[r] \ar[d]_n &I \ar[d] \ar[r] &\Sigma N' \ar@{=}[d] \ar@{-->}[r] &\\
M_X \ar[r]_{\eta} \ar[r] &C \ar[r] &\Sigma N' \ar@{-->}[r] &
}
$$
where $N'\in \add N$, $I\in \mathcal I$ and $\eta$ factors through $\R^{\bot_1}$. Now consider the following commutative diagram
$$\xymatrix{
N' \ar[r] \ar[d]_{mn} &I \ar[d] \ar[r] &\Sigma N' \ar@{=}[d] \ar@{-->}[r] &\\
X \ar[r]_{\eta_X'} \ar[r] &C_X' \ar[r] &\Sigma N' \ar@{-->}[r] &
}
$$
since $H (m)$ and $\overline n$ are surjective in $\overline \h$, then $H(m)\overline n$ is also surjective. But we have an exact sequence $$N'\xrightarrow{H(m)\overline n} H(X)\xrightarrow{H(\eta_X')} C_X',$$ thus $H(\eta_X')=0$ and $\eta_X'$ factors through $\R^{\bot_1}$. This means $X\in N*[R]$.
\end{proof}

\section{Mutation of relative maximal rigid objects}

Let $M=U\oplus X\in \h$ be a basic maximal $\R$-rigid object where $X$ is a non-projective indecomposable object. In this case, $U$ is called \emph{relative almost maximal rigid object} (also is called \emph{almost maximal $\R$-rigid objects}). Thus by Theorem \ref{main} and \cite[Theorem 2.17]{AIR}, there exists another indecomposable object $Y\in \h$ such that $U\oplus Y=N$ is also maximal $\R$-rigid. By \cite[Definition-Proposition 2.26]{AIR} and Proposition \ref{mn}, we have either $M>N$ or $N>M$.

\begin{defn}
Let $M$, $N$ be two basic maximal $\R$-rigid objects in $\h$. We say $N$ is a left mutation of $M$ and $M$ is a right mutation of $N$ if the following condition are satisfied:
\begin{itemize}
\item[(1)] $M=U\oplus X$ and $N=U\oplus Y$, where $X,Y$ are non-projective indecomposable objects.
\item[(2)] $M>N$ by Definition \ref{order}.
\end{itemize}
\end{defn}

From now onwards, we assume $\mathcal P\subseteq \add U$, then we can take a minimal right $\add U$-approximation $f:U_1\to X$ which is also a deflation. We have the following lemma.

\begin{lem}
Let $M=U\oplus X$ be a basic maximal $\R$-rigid where $X$ is indecomposable and all the indecomposable projective objects are isomorphic to direct summands of $U$. If $X\in U*[R]$, then  we can get the following $\EE$-triangle
$$\xymatrix{
Y \ar[r]^g &U_1 \ar[r]^f &X \ar@{-->}[r] &
}
$$
where $f$ is a left minimal. Moreover, in the following commutative diagram
$$\xymatrix{
Y \ar[r]^g \ar@{=}[d] &U_1 \ar[r]^f \ar[d]^{u_0} &X\ar[d]^h \ar@{-->}[r] & \ar@{}[d]^{(\star)}\\
Y \ar[r] &I \ar[r]_i \ar[d]^{i'} &\Sigma Y \ar@{-->}[r] \ar[d]^{h'} &\\
&\Sigma U_1 \ar@{-->}[d] \ar@{=}[r] &\Sigma U_1\ar@{-->}[d]\\
& &&&}
$$
we have that $h$ factors through $\R$. Moreover, $Y\in \h$.
\end{lem}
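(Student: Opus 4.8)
The plan is to first produce the exchange $\EE$-triangle, then build the octahedral diagram $(\star)$, prove that $h$ lands in $\R^{\bot_1}$ and hence in $\R$, and finally deduce $Y\in\h$ from that factorization. Since $\mathcal P\subseteq\add U$, the deflation $P\twoheadrightarrow X$ with $P\in\mathcal P$ factors through the right $\add U$-approximation $f$, so by (WIC) $f$ is a deflation and there is an $\EE$-triangle $Y\xrightarrow{g}U_1\xrightarrow{f}X\dashrightarrow\delta$ with $Y:=\CoCone(f)$; minimality of $f$ together with Krull--Schmidt and the fact that the indecomposable $X$ is not a summand of the basic object $U_1$ forces this $\EE$-triangle to have no split trivial summand, so $f$ is in particular left minimal. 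Fixing an inflation $u_0\colon U_1\to I$ into an injective and applying (ET4) to $[Y\xrightarrow{g}U_1\xrightarrow{f}X,\delta]$ and $[U_1\xrightarrow{u_0}I\xrightarrow{i'}\Sigma U_1]$ produces exactly the diagram $(\star)$; its third column $X\xrightarrow{h}\Sigma Y\xrightarrow{h'}\Sigma U_1\dashrightarrow$ is an $\EE$-triangle, and $(f,i,\mathrm{id}_{\Sigma U_1})$ is a morphism of $\EE$-triangles from the second column to the third.

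The heart of the argument is that $h$ factors through $\R$, and I would reach it in two steps. Because $X\in U*[R]$, Lemma \ref{Fac} gives $X=H(X)\in\Fac U$ in $\overline{\h}$; since $f$ is a right $\add U$-approximation, any epimorphism $U_0\to X$ in $\overline{\h}$ with $U_0\in\add U$ factors through $\overline{f}$, so $\overline{f}=H(f)$ is epic in $\overline{\h}$. Applying $H$ to the morphism of $\EE$-triangles $(f,i,\mathrm{id})$ and using $H(I)=0$ (an injective $I$ has $\EE(\R,I)=0$, i.e.\ $I\in\R^{\bot_1}$), the commutative square gives $H(h)\,H(f)=H(i)\,H(u_0)=0$, and $H(f)$ epic forces $H(h)=0$; by property~(1) of $H$, $h$ factors through $\R^{\bot_1}$. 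Then $(f,i,\mathrm{id})\colon[U_1\xrightarrow{u_0}I\to\Sigma U_1]\to[X\xrightarrow{h}\Sigma Y\to\Sigma U_1]$ is a diagram of the shape $(\lozenge)$ witnessing $X\in U*[R]$, with $M_X=U_1\in\add U$, $I_M=I\in\mathcal I$ and $\eta_X=h$; since moreover $X\in\h$, Lemma \ref{R} applies and upgrades this to: $h=\beta\alpha$ with $\alpha\colon X\to R_0$, $\beta\colon R_0\to\Sigma Y$ and $R_0\in\R$.

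For $Y\in\h$ I would exploit this factorization. From $(\star)$ one has $\delta=h^{\ast}\varepsilon$, where $\varepsilon$ is the extension realized by the second row $Y\to I\xrightarrow{i}\Sigma Y$, hence $\delta=\alpha^{\ast}\varepsilon_0$ with $\varepsilon_0:=\beta^{\ast}\varepsilon\in\EE(R_0,Y)$. Realizing $\varepsilon_0$ as an $\EE$-triangle $Y\xrightarrow{g_0}W\xrightarrow{f_0}R_0\dashrightarrow\varepsilon_0$, the equality $\delta=\alpha^{\ast}\varepsilon_0$ exhibits the first row of $(\star)$ as the pullback of this triangle along $\alpha$, and in particular yields a homotopy-pullback $\EE$-triangle $U_1\to X\oplus W\to R_0\dashrightarrow$. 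Since $U_1\in\add U\subseteq\h$, $R_0\in\R\subseteq\h$, and $\h$ is closed under extensions, $X\oplus W\in\h$, hence $W\in\h$ ($\h$ is closed under summands and $X\in\h$). Finally I would resolve $W$ by an $\EE$-triangle $W\to R_1^{W}\to R_2^{W}\dashrightarrow$ with $R_1^{W},R_2^{W}\in\R$ and apply (ET4) to it and $Y\xrightarrow{g_0}W\xrightarrow{f_0}R_0\dashrightarrow$: one edge of the octahedron is $R_0\to E\to R_2^{W}\dashrightarrow$, which splits since $\EE(R_2^{W},R_0)=0$ by rigidity of $\R$, so $E\cong R_0\oplus R_2^{W}\in\R$; the other edge reads $Y\to R_1^{W}\to E\dashrightarrow$ with $R_1^{W},E\in\R$, i.e.\ $Y\in\CoCone(\R,\R)=\h$.

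The step I expect to be the real obstacle is verifying that the diagram extracted from $(\star)$ is a genuine $(\lozenge)$-diagram, that is, proving $H(h)=0$: this is exactly where the hypothesis $X\in U*[R]$ (through $\overline{f}$ being epic in $\overline{\h}$) is indispensable. Once that is in place, the passage through Lemma \ref{R} and the two octahedral/rigidity manipulations needed for $Y\in\h$ are comparatively formal.
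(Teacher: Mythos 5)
Your construction of the exchange triangle and of the diagram $(\star)$ matches the paper's, but both key steps are carried out by genuinely different means, and both essentially work. For the factorization of $h$ through $\R$, the paper starts from a chosen $(\lozenge)$-diagram witnessing $X\in U*[R]$, factors its approximation $m\colon M_X\to X$ through $f$, uses injectivity of $I_M$ to see that $hm$ extends along $m_1$, deduces that $h$ factors through $\eta_X$, and only then quotes Lemma \ref{R}. You instead prove $H(h)=0$ directly ($H(f)$ is epic because $H(X)\in\Fac U$ by Lemma \ref{Fac} and $f$ is a right $\add U$-approximation, while $H(I)=0$ kills the other composite), observe that $(\star)$ itself is then a $(\lozenge)$-diagram with $M_X=U_1$ and $\eta_X=h$, and apply Lemma \ref{R} to that diagram; this is correct and arguably cleaner. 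For $Y\in\h$, the paper resolves the epimorphism $\overline f$ via \cite[Corollary 2.26]{LN} and applies {\rm (ET4)} once to land immediately on an $\EE$-triangle $Y\to R_1\to R_0$ with both terms in $\R$; your detour through $\delta=\alpha^{\ast}\varepsilon_0$, the homotopy-cartesian triangle $U_1\to X\oplus W\to R_0$, and a final octahedron reaches the same goal, at the cost of two extra closure properties of $\h$.

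That is where the one real weak point sits: you justify $X\oplus W\in\h$ by asserting that ``$\h$ is closed under extensions.'' For $\h=\CoCone(\R,\R)$ with $\R$ merely rigid this is not available in general: for an extension $A\to B\to C$ with $A,C\in\h$, pushing forward along $A\to R_1$ lands in $\EE(C,R_1)$, which only embeds into a group of the form $\EE^2(R_2',R_1)$ and need not vanish, and even when it does vanish the resulting homotopy-cartesian triangle $B\to R_1\oplus C\to R_2$ places $B$ in $\CoCone(\R\vee\add C,\R)$ rather than in $\h$. What saves you is that your quotient term is $R_0\in\R$: the pushforward of $U_1\to X\oplus W\to R_0$ along $U_1\to R_1$ lies in $\EE(R_0,R_1)=0$ by rigidity, so the long exact sequence exhibits the extension as the pullback of $U_1\to R_1\to R_2$ along some $R_0\to R_2$, and the associated homotopy-cartesian $\EE$-triangle $X\oplus W\to R_1\oplus R_0\to R_2$ puts $X\oplus W$ in $\h$ outright. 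With that repair (and granting closure of $\h$ under direct summands, which the paper also uses implicitly), your argument goes through; note, though, that the paper's single application of {\rm (ET4)} avoids introducing $W$ altogether.
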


\begin{proof}
Since $X\in U*[R]$, we have the following commutative diagram
$$\xymatrix{
&&M_X \ar[r]^{m_1} \ar[d]^m &I_M \ar[d] \ar[r] &\Sigma M_X \ar@{=}[d] \ar@{-->}[r] &\\
Y \ar[r] \ar@{=}[d] &U_1 \ar[r]^f \ar[d] &X\ar[d]^h \ar[r]_{\eta_X} \ar[r] &C_X \ar[r] &\Sigma M_X \ar@{-->}[r] &\\
Y \ar[r] &I \ar[r]_i &\Sigma Y \ar@{-->}[r] &
}
$$
where $\eta_X$ factors through $\R$ by Lemma \ref{R}.
Since $f$ is a right $U$-approximation of $X$, then $m$ factors through $f$, hence it also factors through $f$. It follows that $hm$ factors through $i$, thus $hm$ factors through $m_1$. Therefore $h$ factors through $\eta_X$, which means it factors through $\R$.

Now we get $\overline f: U_1\to X$ is surjective, then by \cite[Corollary 2.26]{LN}, $f$ admits the following commutative diagram
$$\xymatrix{
U_1 \ar[r] \ar[d]_f &R_1 \ar[r] \ar[d] &R_2 \ar@{=}[d] \ar@{-->}[r] & \\
X \ar[r] &R_0 \ar[r]  &R_2 \ar@{-->}[r] &
}
$$
where $R_0,R_1,R_2\in \R$. Hence we have the following commutative diagram
$$\xymatrix{
Y \ar[r]^g \ar@{=}[d] &U_1 \ar[r]^f \ar[d]^{u_0} &X\ar[d]^h \ar@{-->}[r] & \\
Y \ar[r] &R_1 \ar[r]_i \ar[d]^{i'} &R_0 \ar@{-->}[r] \ar[d]^{h'} &\\
&R_2 \ar@{-->}[d] \ar@{=}[r] &R_2 \ar@{-->}[d]\\
& &&&}
$$
which implies $Y\in \h$.
\end{proof}

\begin{lem}
In the diagram $(\star)$, we obtain that $g$ is left minimal $\add M$-approximation of $Y$.
\end{lem}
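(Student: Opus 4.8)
The statement to prove is that, in the diagram $(\star)$, the map $g\colon Y\to U_1$ is a left minimal $\add M$-approximation of $Y$. I would split this into three pieces: (i) every morphism from $Y$ to an object of $\add M = \add(U\oplus X)$ factors through $g$, i.e. $g$ is a left $\add M$-approximation; (ii) $g$ is left minimal; (iii) check along the way that nothing forces $Y$ to acquire summands outside $\h$ (this has already been secured by the previous lemma, so it can be quoted).

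For the approximation property (i), the key input is that $M=U\oplus X$ is maximal $\R$-rigid, hence by Theorem~\ref{main} $\Hom_{\uB}(\Omega R,M)$ is support $\tau$-tilting, and (equivalently, via Lemma~\ref{sum} and Lemma~\ref{summand}) $M\oplus Y$ fails to be $\R$-rigid only because $Y\notin\add M$. First I would take an arbitrary morphism $\alpha\colon Y\to M_0$ with $M_0\in\add M$ and produce, using the $\EE$-triangle $Y\xrightarrow{g}U_1\xrightarrow{f}X\dashrightarrow$ from $(\star)$, a factorization $\alpha = \alpha' g$. Morphisms $Y\to U'$ with $U'\in\add U$: here I would use that $f$ is a deflation and apply the long exact sequence obtained by applying $\Hom_\B(-,U')$ to the triangle, together with the fact that $\EE(X,U')$ vanishes — this is where I expect to invoke $X\in U*[R]$ and the rigidity data ($\EE(\R,\mathcal P)=0$, $\R$ rigid, and the $\R$-rigidity of $M$) to kill the obstruction group, exactly as in the proofs of Lemma~\ref{sum} and Lemma~\ref{rigid}. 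Morphisms $Y\to X$: since $X$ is the cokernel-type term of the triangle and $f$ is a right $\add U$-approximation (in fact $\mathcal P\subseteq\add U$), one shows $\EE(X,X)$-obstructions are controlled by the $\R$-rigidity of $X$ established in the previous lemma (where $h$ was shown to factor through $\R$); concretely, lifting against the triangle $(\star)$ and using that $h$ factors through $\R$ together with $\Hom_{\uB}(\Omega R,-)$-exactness lets the composite $Y\to X$ be pulled back through $g$.

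For minimality (ii), I would argue as usual: if $g = g$ admits an idempotent summand $U_1 = U_1'\oplus U_1''$ on which $g$ is zero into $U_1''$, then $U_1''$ is a direct summand of $X$ via $f$ (because $f$ is left minimal, as already recorded in $(\star)$), contradicting that $X$ is indecomposable and non-projective unless $U_1''=0$. More cleanly: $f$ being left minimal forces $g$ to be left minimal, since in an $\EE$-triangle $Y\xrightarrow{g}U_1\xrightarrow{f}X\dashrightarrow$ a self-map $(1_Y,\varphi,1_X)$ with $\varphi$ an endomorphism of $U_1$ making the squares commute must have $\varphi$ invertible precisely when $f$ (equivalently $g$) is minimal; the minimality of $f$ was built into the construction of the triangle in the preceding lemma.

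The main obstacle I anticipate is step (i) for targets $U'\in\add U$: one must verify that $\EE(X,U')=0$, and this is not immediate — it requires combining $X\in U*[R]$ (so $X$ is built from $U$ and $\R$ via injectives), the hypothesis $\EE(R,\mathcal P)=0$ and $\mathcal P\subseteq\add U$, and the $\R$-rigidity of $M$. I would handle it by resolving $X$ through the $\EE$-triangle witnessing $X\in U*[R]$, applying $\EE(-,U')$, and chasing the resulting exact sequence, reducing everything to the vanishing statements already available ($\EE(\R,\R)=0$, $\EE(\R,\mathcal P)=0$, and $\EE(U,U')=0$ from $\R$-rigidity of $M$). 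Everything else is a diagram chase against $(\star)$ of the kind carried out repeatedly in Section~3.
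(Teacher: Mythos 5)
Your minimality argument (ii) is essentially the paper's: if $g=\svecv{u_1}{0}$ with respect to a decomposition $U_1=U_{11}\oplus U_{12}$, then $U_{12}$ becomes a direct summand of $X$ via $f$, contradicting that $X$ is indecomposable and not in $\add U$. (Note that $f$ is a \emph{right} minimal $\add U$-approximation of $X$; its minimality is not actually what drives this step.)

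The genuine gap is in step (i), for targets $U'\in\add U$. You propose to factor an arbitrary morphism $Y\to U'$ through $g$ by proving $\EE(X,U')=0$ and invoking the long exact sequence, and you plan to obtain this vanishing from ``$\EE(U,U')=0$ from the $\R$-rigidity of $M$''. But $\R$-rigidity of $M$ does \emph{not} say $\EE(M,M)=0$; it is the relative condition $\overline{[R]}(M,\Sigma M)=[R](M,\Sigma M)$, and the whole point of this framework is that relative rigid objects need not be rigid (rigidity only follows under the extra hypothesis that $R$ is $2$-rigid, Proposition~\ref{2rigid}). So $\EE(X,U')=0$ is neither available nor needed, and your proposed reduction collapses. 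The paper's argument avoids any Ext-vanishing: given $y_0\colon Y\to M_1$ with $M_1\in\add M$, extend it to a morphism of $\EE$-triangles from $Y\to I\to\Sigma Y$ to $M_1\to I_{M}\xrightarrow{i_m}\Sigma M_1$, obtaining $a\colon\Sigma Y\to\Sigma M_1$; since $h\colon X\to\Sigma Y$ factors through $\R$ (by the preceding lemma) and $M$ is $\R$-rigid with $X\in\add M$, the composite $ah$ factors through $i_m$; then Remark~\ref{useful}, applied to the composite morphism of $\EE$-triangles $(y_0,\ast,ah)$ from $Y\xrightarrow{g}U_1\xrightarrow{f}X$ to $M_1\to I_{M}\to\Sigma M_1$, says that $ah$ factoring through $i_m$ is equivalent to $y_0$ factoring through $g$. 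This handles all of $\add M$ (both the $\add U$ and the $\add X$ parts) uniformly and only requires that the \emph{specific} obstruction class $ah$ dies, not that a whole extension group vanishes. Your sketch for morphisms $Y\to X$ gestures at the right ingredients ($h$ factors through $\R$, the $\R$-rigidity of $M$), but as written neither half of your step (i) constitutes a proof.
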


\begin{proof}
Let $y_1:Y \to M_1$ be any morphism where $M_1\in \add M$. We have the following commutative diagram
$$\xymatrix{
\Omega X \ar[d]_b \ar[r]^c &P \ar[d] \ar[r]^p &X \ar@{=}[d] \ar@{-->}[r] & \\
Y \ar[r]^-g \ar@{=}[d] &U_1 \ar[r]^f \ar[d] &X\ar[d]^h \ar@{-->}[r] &\\
Y \ar[r] \ar[d]_{y_0} &I \ar[r] \ar[d] &\Sigma Y \ar[d]^a \ar@{-->}[r] &\\
M \ar[r] &I_M \ar[r]_{i_m} &\Sigma M \ar@{-->}[r] &
}
$$
Since $M$ is $\R$-rigid and $h$ factors through $\R$, we have $ah$ factors through $i_m$, which implies $y_0$ factors through $g$. Hence $g$ is a left $\add M$-approximation of $Y$. Now we show it is also minimal.
If this were not true, then there
would be a decomposition $U_1=U_{11}\oplus U_{12}$ such that
$$g=\svecv{u_1}{0}\colon Y\xrightarrow{~~} U_{11}\oplus U_{12}.$$ It is easy to check that $U_{12}$ is a direct summand of $X$. Hence $X\simeq U_{12}$, a contradiction to what we claimed about $U$ and $X$.
\end{proof}

\begin{lem}
The object $Y$ in $(\star)$ is indecomposable and not in $\add M$.
\end{lem}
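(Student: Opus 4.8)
The goal is to show that the object $Y$ appearing in the $\EE$-triangle $Y\xrightarrow{g} U_1\xrightarrow{f} X\dashrightarrow$ of $(\star)$ is indecomposable and does not lie in $\add M$. I would argue both parts by transporting the question through the equivalence $G\colon\overline\h\xrightarrow{\simeq}\mod\Gamma$, together with the fact (already established in the previous two lemmas) that $g$ is a \emph{left minimal} $\add M$-approximation of $Y$ and that $Y\in\h$. Since $M=U\oplus X$ is a basic maximal $\R$-rigid object, Theorem \ref{main}(c) tells us that $\Hom_{\uB}(\Omega R,M)$ is a basic support $\tau$-tilting $\Gamma$-module (paired with $\Hom_{\uB}(\Omega R,\Omega R_M)$ for the projective part of $M$), and by the Adachi--Iyama--Reiten mutation theory \cite[Theorem 2.17]{AIR} the module associated to $N=U\oplus Y$ is obtained from that of $M$ by a single mutation at the indecomposable summand corresponding to $X$; in particular $H(Y)$ is indecomposable as a $\Gamma$-module, hence $Y$ has a unique non-projective indecomposable summand up to iso.

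\emph{Indecomposability.} First I would note that $Y\in\h$, so $H(Y)=Y$ in $\overline\h$ and $G(Y)=\Hom_{\uB}(\Omega R,Y)$ is exactly the $\Gamma$-module produced by mutating $\Hom_{\uB}(\Omega R,M)$ at the summand $\Hom_{\uB}(\Omega R,X)$. By \cite[Theorem 2.17]{AIR} this mutated module has exactly one indecomposable summand not shared with $\Hom_{\uB}(\Omega R,U)$, and that summand is indecomposable; so $\Hom_{\uB}(\Omega R,Y)$ is indecomposable in $\mod\Gamma$. Writing $Y=Y'\oplus P'$ with $P'\in\mathcal P$ and $Y'$ having no projective summand, applying $\Hom_{\uB}(\Omega R,-)$ kills $P'$, so $\Hom_{\uB}(\Omega R,Y)\cong\Hom_{\uB}(\Omega R,Y')$ is indecomposable. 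If $Y$ had two or more non-projective indecomposable summands, or a non-projective summand plus a projective one, this would contradict either the indecomposability just obtained or the fact that $g$ is \emph{left minimal}: a projective summand $P'$ of $Y$ would, since $\mathcal P\subseteq\add U\subseteq\add M$, split off a map to $\add M$ factoring trivially, and minimality of $g$ together with $\mathcal P\subseteq\add U$ forces $Y$ to have no projective direct summand. Combining, $Y$ is indecomposable.

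\emph{$Y\notin\add M$.} Suppose for contradiction $Y\in\add M=\add(U\oplus X)$. If $Y\in\add U$ then $U\oplus Y\in\add U$, but $N=U\oplus Y$ is a \emph{basic} maximal $\R$-rigid object with $|N|=|\Gamma|=|M|>|U|$ (since $X\notin\add U$ as $X$ is a non-projective indecomposable summand of the basic object $M$), a contradiction. So we would need $Y\simeq X$. Then the $\EE$-triangle $X\xrightarrow{g}U_1\xrightarrow{f}X\dashrightarrow$ has $f$ a left minimal $\add M$-approximation and also, from the construction, $f$ is a minimal right $\add U$-approximation that is a deflation; I would derive a contradiction by observing that $M>N$ strictly (so $M\not\simeq N$, hence $X\not\simeq Y$ at the level of the associated support $\tau$-tilting modules, since mutation at $X$ genuinely changes the module by \cite[Theorem 2.17]{AIR}), or directly: if $Y\simeq X$ then $\Hom_{\uB}(\Omega R,X)\simeq\Hom_{\uB}(\Omega R,Y)$, so $M$ and $N$ would give the same support $\tau$-tilting pair, contradicting that they are distinct basic maximal $\R$-rigid objects (Theorem \ref{main}(c) is a bijection).

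\emph{Main obstacle.} The delicate point is the bookkeeping of projective summands: one must be sure that the object $Y$ coming out of $(\star)$ has no projective direct summand, so that the indecomposability of $\Hom_{\uB}(\Omega R,Y)$ in $\mod\Gamma$ actually lifts to indecomposability of $Y$ in $\B$ rather than only in $\overline\h$. This is where the standing hypothesis $\mathcal P\subseteq\add U$ and the left-minimality of $g$ (from the preceding lemma) do the real work: a projective summand of $Y$ would contribute a split summand to the approximation $g$, violating minimality — but phrasing this carefully, and ruling out the borderline case $Y\simeq X$ cleanly via the AIR mutation bijection, is the part that needs the most care.
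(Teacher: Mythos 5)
Your proposal has a genuine circularity problem. You transport the statement to $\mod\Gamma$ and invoke \cite[Theorem 2.17]{AIR}, asserting that $N=U\oplus Y$ is a basic maximal $\R$-rigid object and that $\Hom_{\uB}(\Omega R,Y)$ is the AIR mutation of $\Hom_{\uB}(\Omega R,M)$ at the summand $\Hom_{\uB}(\Omega R,X)$. But at this point in the development none of that is available: the statement that $U\oplus Y$ is ($\R$-)rigid, hence that its image is a support $\tau$-tilting pair to which the AIR exchange theorem applies, is only established in the proof of Theorem \ref{main2} --- and that proof explicitly relies on the present lemma (``according to the previous lemmas in this section\dots''). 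Even granting rigidity, you would still need to identify $H(Y)$ with the AIR complement, i.e.\ to match the $\EE$-triangle $Y\to U_1\xrightarrow{f}X$ with the exchange sequences of \cite{AIR} in $\mod\Gamma$; this is a nontrivial step that your sketch does not supply. A further local gap: your claim that left minimality of $g$ forbids a projective direct summand $P'$ of $Y$ does not hold as stated --- if the triangle decomposed as $(P'\xrightarrow{1}P'\to 0)\oplus(Y'\to U_1''\to X)$, the map $g'\oplus 1_{P'}$ would still be left minimal; what actually rules this out is the \emph{right} minimality of $f$, and since $\Hom_{\uB}(\Omega R,-)$ kills projectives this bookkeeping is essential to your route.

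The paper avoids all of this by arguing directly in $\B$. For indecomposability it takes a direct summand $Y_1$ of $Y$, builds the analogous triangle $Y_1\to U_1'\to X_1$ with $g_1$ a left minimal $\add U$-approximation, and compares the two triangles: the section/retraction pair $\alpha,\beta$ with $\beta\alpha=1_{Y_1}$ together with left minimality forces an isomorphism on the middle terms, hence a split monomorphism $X_1\to X$, and indecomposability of $X$ then forces $\alpha$ to be an isomorphism. For $Y\notin\add M$ it uses that $h$ in $(\star)$ factors through $\R$: if $Y\in\add M$, the $\R$-rigidity of $M$ makes $h$ factor through $i$, so by Remark \ref{useful} the triangle splits and $X$ becomes a direct summand of $U_1$, a contradiction. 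If you want to keep your module-theoretic viewpoint, you would have to first prove rigidity of $U\oplus Y$ independently (essentially redoing the first half of the proof of Theorem \ref{main2}), at which point the elementary argument is both shorter and logically safer.
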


\begin{proof}
Let $Y=Y_1\oplus Y_2$. Then $Y_1$ admits an $\EE$-triangle $\xymatrix{Y_1 \ar[r]^-{g_1} &U_1 \ar[r]^-{f_1} &X_1 \ar@{-->}[r] &}$ where $g_1$ is a left minimal $\add U$-approximation of $Y_1$. Then we have the following commutative diagram:
$$\xymatrix@C=1.3cm{
Y_1 \ar[r]^-{g_1} \ar[d]_{\alpha} &U_1 \ar@{=}[d] \ar[r]^-{f_1} &X_1\ar[d]^a \ar@{-->}[r] &\\
Y \ar[r]^-{g} \ar[d]_{\beta} &U_1 \ar[r]^-{f} \ar[d]^{v} &X\ar[d]^b \ar@{-->}[r] &\\
Y_1 \ar[r]^-{g_1} &U_1 \ar[r]^-{f_1} &X_1 \ar@{-->}[r] &
}
$$
where $\beta\alpha=1_{Y_1}$. Since $g_1$ is left minimal, we have that $v$ is an isomorphism. Then $ba$ is also an isomorphism. But $X$ is indecomposable, we have $a$ is an isomorphism. Hence $\alpha$ is an isomorphism. This implies $Y$ is indecomposable.

Now we show $Y\notin \add M$.
If $Y\in \add M$, since $M$ is $\R$-rigid, in the diagram $(\star)$ we have $h$ factors through $i$, hence $1_Y$ factors through $g$. This means the second row splits and X is a direct summand of $U_1$, a contradiction to what we claimed.
\end{proof}



Given an almost maximal $\R$-rigid object, our main result in this
section shows that starting with a complement, we can calculate the other one by
an exchange $\EE$-triangle, which is constructed from a left approximation or a right
approximation.

\begin{thm}\label{main2}
Let $M=U\oplus X$ be a basic maximal $\R$-rigid object where $X$ is a non-projective indecomposable object. Then we have the following.
\begin{itemize}
\item[(a)] If $X\in U*[R]$, there is an $\EE$-triangle $$\xymatrix{Y \ar[r] &U_1 \ar[r]^f &X \ar@{-->}[r] &}$$
where $f$ is a right minimal $\add U$-approximation of $X$ and $Y$ is another complement to $U$ and $U\oplus Y>M$.

\item[(b)] If $X\notin U*[R]$, there is an $\EE$-triangle $$\xymatrix{X \ar[r]^g &U_2 \ar[r] &Y \ar@{-->}[r] &}$$
where $g$ is a left minimal $\add U$-approximation of $X$ and $Y$ is another complement to $U$ and $U\oplus Y<M$.
\end{itemize}
\end{thm}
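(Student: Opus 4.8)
The plan is to deduce both parts from the three lemmas established immediately above, together with Theorem~\ref{main} and the mutation theory of support $\tau$-tilting pairs from \cite{AIR}. For part (a), let $f\colon U_1\to X$ be the chosen right minimal $\add U$-approximation of $X$; it is a deflation, so we have the exchange $\EE$-triangle $Y\xrightarrow{~g~}U_1\xrightarrow{~f~}X\dashrightarrow$. The preceding lemmas already supply everything except maximality: $Y\in\h$, $Y$ is indecomposable, $Y\notin\add M$, and $g$ is a left minimal $\add M$-approximation of $Y$. Hence it remains to prove that $U\oplus Y$ is a maximal $\R$-rigid object and that $U\oplus Y>M$.

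For maximality I would identify $Y$ with the canonical ``second complement'' of the relative almost maximal rigid object $U$. By Theorem~\ref{main} and \cite[Theorem~2.17]{AIR}, $U$ admits exactly one basic maximal $\R$-rigid completion $N=U\oplus Y'$ distinct from $M$, with $Y'$ indecomposable, non-projective, $Y'\not\simeq X$. Applying $\Hom_{\uB}(\Omega R,-)=G\circ H$ to the exchange $\EE$-triangle, and using that $\overline f$ is epic in $\overline\h$ (this surjectivity is shown in the proof of the first of the three lemmas) together with the fact that $G$ is an equivalence, one sees that $\Hom_{\uB}(\Omega R,f)$ is a minimal right $\add\Hom_{\uB}(\Omega R,U)$-approximation of $\Hom_{\uB}(\Omega R,X)$. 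Since $X\in U*[R]$ gives $H(X)\in\Fac H(U)$ by Lemma~\ref{Fac}, this is exactly the data involved in the ``upward'' mutation of the support $\tau$-tilting pair attached to $M$ via Theorem~\ref{main}(c); the corresponding exchange sequence in $\mod\Gamma$ has the shape $0\to\Hom_{\uB}(\Omega R,Y')\to B\to\Hom_{\uB}(\Omega R,X)\to 0$ with $B$ this same minimal right approximation, so uniqueness of minimal approximations identifies the kernels and forces $\Hom_{\uB}(\Omega R,Y)\simeq\Hom_{\uB}(\Omega R,Y')$; hence $Y\simeq Y'$ by Theorem~\ref{main}(b) applied to the indecomposable non-projective objects $Y,Y'$. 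Thus $U\oplus Y=N$ is maximal $\R$-rigid, and it is a genuine second complement since $Y\notin\add M$.

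For the order relation, recall from the discussion preceding Definition~\ref{order} (via \cite[Definition-Proposition~2.26]{AIR} and Proposition~\ref{mn}) that, $N$ being the second completion of $U$, either $M>N$ or $N>M$. On the other hand $M*[R]\subseteq N*[R]$: by Lemma~\ref{Fac} and Proposition~\ref{mn} this is equivalent to $\Fac H(M)\subseteq\Fac H(N)$ in $\overline\h$, and since $X\in U*[R]$ gives $H(X)\in\Fac H(U)$ we obtain $\Fac H(M)=\Fac\bigl(H(U)\oplus H(X)\bigr)=\Fac H(U)\subseteq\Fac H(U\oplus Y)=\Fac H(N)$, using only that $\Fac$ is closed under quotients and finite direct sums. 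So $N\geq M$; combined with the dichotomy, with $N\neq M$ (as $Y\notin\add M$), and with Theorem~\ref{partial}, this forces $N>M$, i.e.\ $U\oplus Y>M$. Part (b) is the dual argument: assuming $X\notin U*[R]$ one takes a left minimal $\add U$-approximation $g\colon X\to U_2$, checks via $X\in\h$ and (WIC) that it is an inflation sitting in an $\EE$-triangle $X\xrightarrow{~g~}U_2\to Y\dashrightarrow$, re-derives the analogues of the three lemmas and of the maximality step (now through the ``downward'' mutation of \cite{AIR}, since $H(X)\notin\Fac H(U)$), and notes that applying $H$ exhibits $H(Y)$ as a quotient of $H(U_2)\in\add H(U)$, whence $\Fac H(N)=\Fac H(U)\subsetneq\Fac\bigl(H(U)\oplus H(X)\bigr)=\Fac H(M)$, the inclusion being strict precisely because $X\notin U*[R]$; thus $U\oplus Y<M$.

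The main obstacle is the maximality step, namely proving that $U\oplus Y$ is $\R$-rigid (equivalently, by Theorem~\ref{main}(a), that $\Hom_{\uB}(\Omega R,U\oplus Y)$ is $\tau$-rigid): one must transport the exchange $\EE$-triangle in $\B$ faithfully onto the exchange short exact sequence of \cite{AIR} in $\mod\Gamma$. The delicate points are matching the left minimality of $g$ as an $\add M$-approximation of $Y$ with the minimality of the projective presentation of $\Hom_{\uB}(\Omega R,Y)$ (which is where left-exactness of $\Hom_{\uB}(\Omega R,-)$ on the triangle is needed), keeping track of the projective summands of $U$ (which are annihilated by $\Hom_{\uB}(\Omega R,-)$), handling the degenerate case in which $X$ corresponds to a projective summand of the support $\tau$-tilting pair rather than to a module summand, and remembering the $\Omega R$-shift that makes maximal $\R$-rigid objects correspond to support $\tau$-tilting \emph{pairs} rather than to modules.
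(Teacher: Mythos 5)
Your overall skeleton matches the paper's: take the exchange $\EE$-triangle supplied by the three preceding lemmas, prove that $U\oplus Y$ is maximal $\R$-rigid, and then settle the order relation. Your treatment of the order relation via $\Fac H(M)=\Fac H(U)\subseteq\Fac H(N)$ in $\overline{\h}$ together with Proposition \ref{mn} and antisymmetry is fine and essentially equivalent to (arguably cleaner than) the paper's contradiction argument. The problem is the maximality step, which is the heart of the proof and which you yourself flag as the ``main obstacle'' without resolving it. The paper does \emph{not} identify $Y$ with an abstractly given second complement through $\mod\Gamma$; it proves directly, by three diagram chases in $\B$ built on the diagram $(\star)$, that $\overline{[R]}(U,\Sigma Y)=[R](U,\Sigma Y)$, $\overline{[R]}(Y,\Sigma U)=[R](Y,\Sigma U)$ and $\overline{[R]}(Y,\Sigma Y)=[R](Y,\Sigma Y)$, i.e.\ that $U\oplus Y$ is $\R$-rigid; only then does it invoke Theorem \ref{main} and \cite[Theorem 2.17]{AIR} to upgrade rigidity to maximality.

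Your proposed substitute for this step does not go through as written. You want a short exact sequence $0\to\Hom_{\uB}(\Omega R,Y')\to B\to\Hom_{\uB}(\Omega R,X)\to 0$ in $\mod\Gamma$ and to match its kernel with $\Hom_{\uB}(\Omega R,Y)$ by uniqueness of minimal right approximations. Two things fail here. First, applying $H=\Hom_{\uB}(\Omega R,-)$ to the $\EE$-triangle $Y\to U_1\to X$ only yields a sequence $H(Y)\to H(U_1)\to H(X)$ exact in the middle (this is all the paper's property (2) of $H$ gives), so $H(Y)$ need not be the kernel of $H(f)$; establishing this left exactness is exactly the rigidity statement you are trying to prove, so the argument is circular at the decisive point. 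Second, the exchange sequences of \cite{AIR} for support $\tau$-tilting mutation are cokernel (right-exact) sequences, not short exact sequences of the displayed form, so the claimed identification of kernels has no source to compare against. Finally, even granting $H(Y)\simeq H(Y')$, passing to $Y\simeq Y'$ via Theorem \ref{main}(b) presupposes that $Y$ is already known to be $\R$-rigid, which again is what is being proved. To repair the proof you need the paper's direct verification that the three Hom-spaces above satisfy $\overline{[R]}=[R]$, using that $h$ factors through $\R$, that $f$ is a right $\add U$-approximation, and that $U$ and $M$ are $\R$-rigid.
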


\begin{proof}
According to the previous lemmas in this section, we first show $U\oplus Y$ is $\R$-rigid.

Consider a morphism $\alpha: U\xrightarrow{} R \xrightarrow{} \Sigma Y$, in the diagram $(\star)$, we obtain that $h' \alpha$ factors through $i'$. So there is a morphism $u:U \to I$ such that $h' \alpha=i'u=h'iu$. Hence we have a morphism $u': U\to X$ such that $\alpha=iu+hu'$. Since $u'$ factors through $f$, we have that $hu'$ also factors through $i$. This implies $\overline {[R]}(U,\Sigma Y)=[R](U,\Sigma Y)$.

Consider a morphism $\beta: Y\xrightarrow{~b_1~} R \xrightarrow{~b_2~} \Sigma U$, since in the diagram $(\star)$, we have $h:X \xrightarrow{~h_1~} R\xrightarrow{~h_2~} \Sigma Y$, we can get the following commutative diagrams:
$$\xymatrix{
\Omega X \ar[r]^q \ar[d]_{h'} &P \ar[r]^p \ar[d] &X \ar@{=}[d] \ar@{-->}[r] &\\
Y \ar[r]^-{g} \ar@{=}[d] &U_1 \ar[r]^-{f} \ar[d]^{u_0} &X\ar[d]^h \ar@{-->}[r] &\\
Y \ar[r] &I \ar[r]_i &\Sigma Y \ar@{-->}[r] &}
$$
and
$$\xymatrix{
\Omega X \ar[r]^q \ar[d]_{h_1'} &P \ar[d] \ar[r]^p &X \ar[d]^{h_1} \ar@{-->}[r] &\\
\Omega R \ar[r]^{p_R} \ar[d]_{h_2'} &P_R \ar[d] \ar[r] &R \ar[d]^{h_2} \ar@{-->}[r] &\\
Y \ar[r]^j  &I \ar[r]  &\Sigma Y \ar@{-->}[r] &\\
}
$$
we have $h'-h_2'h_1'$ factors through $q$. There exists a morphism $p_Y:P\to Y$ such that $p_Yq=h'-h_2'h_1'$. Since $b_1h_2'$ factors through $p_R$, we have that $b_1h_2'h_1'$ factors through $q$. Hence $b_1h'$ factors through $q$. Thus there is a morphism $v: U_1\to R$ such that $b_1=v g$. Since $U$ is $\R$-rigid, $b_2 v$ factors through $i_U$: $\xymatrix{U \ar[r] &I_U \ar[r]^{i_U} &\Sigma U \ar@{-->}[r] &}$. Hence $\beta$ factors through $i_U$ and we get $\overline {[R]}(Y,\Sigma U)=[R](Y,\Sigma U)$.

By the same method and by the fact $\overline {[R]}(U,\Sigma Y)=[R](U,\Sigma Y)$, we can get $\overline {[R]}(Y,\Sigma Y)=[R](Y,\Sigma Y)$.

Now according to the previous results of this section, Theorem \ref{main} and \cite[Theorem 2.17]{AIR}, $Y$ is another complement to $U$, which means $U\oplus Y$ is also maximal $\R$-rigid.

We show $U\oplus Y>M$. If it is not the case, we have $U\oplus Y<M$. But then $X\notin \Fac U$, a contradiction to the assumption that $X\in U*[R]$.
By the same method we can show $\overline {[R]}(Y,\Sigma Y)=[R](Y,\Sigma Y)$.
\medskip

(2) Let $Y$ be another complement to $U$. If $X\notin U*[R]$, then $X\notin \Fac U$. By \cite[Definition-Proposition 2.26]{AIR} and Proposition \ref{mn} we have $U\oplus Y<M$ and $Y\in U*[R]$. By using (1) and the previous lemmas in this section, we get what we want.
\end{proof}

Our main results seem to be new phenomenon when it is applied to exact categories.
In particular, since module categories and triangulated categories can be viewed
as extriangulated categories, our these results generalize their work by
Yang-Zhu \cite{YZ} and Gei{\ss}-Leclerc-Schr\"{o}er \cite{GLS}. Moreover, our proof is not far from the usual module or triangulated case.

\section{Example}

In this section, we give an example illustrating our main results.

\begin{exm}\label{ex1}
Let $\Lambda$ be the $k$-algebra given by the quiver
$$\xymatrix@C=0.4cm@R0.4cm{
&&3 \ar[dl]\\
&5 \ar[dl] \ar@{.}[rr] &&2 \ar[dl] \ar[ul]\\
6 \ar@{.}[rr] &&4 \ar[ul] \ar@{.}[rr] &&1 \ar[ul]}$$
with mesh relations. The Auslander-Reiten quiver of $\B:=\mod\Lambda$ is given by
$$\xymatrix@C=0.4cm@R0.4cm{
&&{\begin{smallmatrix}
3&&\\
&5&\\
&&6
\end{smallmatrix}} \ar[dr] &&&&&&{\begin{smallmatrix}
1&&\\
&2&\\
&&3
\end{smallmatrix}} \ar[dr]\\
&{\begin{smallmatrix}
5&&\\
&6&
\end{smallmatrix}} \ar[ur] \ar@{.}[rr] \ar[dr] &&{\begin{smallmatrix}
3&&\\
&5&
\end{smallmatrix}} \ar@{.}[rr] \ar[dr] &&{\begin{smallmatrix}
4
\end{smallmatrix}} \ar@{.}[rr] \ar[dr] &&{\begin{smallmatrix}
2&&\\
&3&
\end{smallmatrix}} \ar[ur] \ar@{.}[rr] \ar[dr] &&{\begin{smallmatrix}
1&&\\
&2&
\end{smallmatrix}} \ar[dr]\\
{\begin{smallmatrix}
6
\end{smallmatrix}} \ar[ur] \ar@{.}[rr] &&{\begin{smallmatrix}
5
\end{smallmatrix}} \ar[ur] \ar@{.}[rr] \ar[dr] &&{\begin{smallmatrix}
3&&4\\
&5&
\end{smallmatrix}} \ar[ur] \ar[r] \ar[dr] \ar@{.}@/^15pt/[rr] &{\begin{smallmatrix}
&2&\\
3&&4\\
&5&
\end{smallmatrix}} \ar[r] &{\begin{smallmatrix}
&2&\\
3&&4
\end{smallmatrix}} \ar[ur] \ar@{.}[rr] \ar[dr] &&{\begin{smallmatrix}
2
\end{smallmatrix}} \ar[ur] \ar@{.}[rr] &&{\begin{smallmatrix}
1
\end{smallmatrix}}.\\
&&&{\begin{smallmatrix}
4&&\\
&5&
\end{smallmatrix}} \ar[ur] \ar@{.}[rr] &&{\begin{smallmatrix}
3
\end{smallmatrix}} \ar[ur] \ar@{.}[rr] &&{\begin{smallmatrix}
2&&\\
&4&
\end{smallmatrix}} \ar[ur]
}$$
We denote by ``~$\circ$" in the Auslander-Reiten quiver the indecomposable objects belong to a subcategory and by ``~$\bullet$'' the indecomposable objects do not belong to it.
$$\xymatrix@C=0.2cm@R0.2cm{
&&&\bullet \ar[dr] &&&&&&\bullet \ar[dr]\\
{R:} &&\bullet \ar[ur]  \ar[dr] &&\bullet  \ar[dr] &&\bullet  \ar[dr] &&\bullet  \ar[ur]  \ar[dr] &&\circ \ar[dr]\\
&\bullet \ar[ur]  &&\bullet \ar[ur]  \ar[dr] &&\bullet \ar[ur] \ar[r] \ar[dr] &\bullet \ar[r] &\bullet \ar[ur] \ar[dr] &&\bullet \ar[ur] &&\circ\\
&&&&\bullet \ar[ur] &&\bullet \ar[ur] &&\circ \ar[ur]
\\} \quad
\xymatrix@C=0.2cm@R0.2cm{
&&&\circ \ar[dr] &&&&&&\circ \ar[dr]\\
{R^{\bot_1}:} &&\circ \ar[ur]  \ar[dr] &&\bullet  \ar[dr] &&\circ  \ar[dr] &&\bullet  \ar[ur]  \ar[dr] &&\circ \ar[dr]\\
&\circ \ar[ur]  &&\circ \ar[ur]  \ar[dr] &&\bullet \ar[ur] \ar[r] \ar[dr] &\circ \ar[r] &\bullet \ar[ur] \ar[dr] &&\bullet \ar[ur] &&\circ\\
&&&&\circ \ar[ur] &&\bullet \ar[ur] &&\circ \ar[ur]
}$$
where $R$ is rigid and $R^{\bot_1}=\{X\in\B~|~\Ext^1(R,X)=0\}$.
The quiver of $\overline \h$ is the following:

$$\xymatrix@C=0.4cm@R0.4cm{
&&&{\begin{smallmatrix}
2&\ \\
&3
\end{smallmatrix}}\ar[dr]\\
{\begin{smallmatrix}
3&&\ \\
&5&
\end{smallmatrix}}\ar[dr] \ar@{.}[rr]
&&{\begin{smallmatrix}
&2&\ \\
3&&4
\end{smallmatrix}}\ar[ur] \ar@{.}[rr]
&&{\begin{smallmatrix}
\ &2&\
\end{smallmatrix}}.\\
&{\begin{smallmatrix}
\ &3&\
\end{smallmatrix}} \ar[ur]}$$
It is equivalent to $\mod (kQ/ \langle \beta \alpha \rangle)$, where $Q$ is the quiver $1\xrightarrow{\alpha} 2\xrightarrow{\beta} 3$. According to \cite[Example 6.4]{AIR}, there are $12$ basic support $\tau$-tilting pairs in $\overline \h$, we list them and the maximal $\R$-rigid objects in $\h\subseteq\mod\Lambda$ which are correspondent to them by Theorem \ref{main} below:
\vspace{0.5mm}

\begin{eqnarray*}
(\text{ }0,{\begin{smallmatrix}
3&&\ \\
&5&
\end{smallmatrix}} \oplus {\begin{smallmatrix}
\ &3&\
\end{smallmatrix}} \oplus {\begin{smallmatrix}
2&\ \\
&3
\end{smallmatrix}}) &\longmapsto &R   \nonumber \\[1mm]
({\begin{smallmatrix}
&2&\ \\
3&&4
\end{smallmatrix}}, {\begin{smallmatrix}
3&&\ \\
&5&
\end{smallmatrix}} \oplus {\begin{smallmatrix}
2&\ \\
&3
\end{smallmatrix}})  & \longmapsto &  {\begin{smallmatrix}
&2&\ \\
3&&4
\end{smallmatrix}} \oplus {\begin{smallmatrix}
2&&\\
&4&
\end{smallmatrix}} \oplus {\begin{smallmatrix}
\ &1&\
\end{smallmatrix}}   \nonumber \\[1mm]
( {\begin{smallmatrix}
2&\ \\
&3
\end{smallmatrix}}\oplus {\begin{smallmatrix}
\ &2&\
\end{smallmatrix}}, {\begin{smallmatrix}
3&&\ \\
&5&
\end{smallmatrix}})   & \longmapsto&  {\begin{smallmatrix}
2&\ \\
&3
\end{smallmatrix}}\oplus {\begin{smallmatrix}
\ &2&\
\end{smallmatrix}} \oplus {\begin{smallmatrix}
2&\ \\
&4
\end{smallmatrix}}=N   \nonumber\\[1mm]
({\begin{smallmatrix}
\ &3&\
\end{smallmatrix}} \oplus {\begin{smallmatrix}
&2&\ \\
3&&4
\end{smallmatrix}}, {\begin{smallmatrix}
2&\ \\
&3
\end{smallmatrix}}) & \longmapsto&   {\begin{smallmatrix}
\ &3&\
\end{smallmatrix}} \oplus {\begin{smallmatrix}
&2&\ \\
3&&4
\end{smallmatrix}} \oplus {\begin{smallmatrix}
\ &1&\
\end{smallmatrix}} \nonumber\\[1mm]
({\begin{smallmatrix}
&2&\ \\
3&&4
\end{smallmatrix}} \oplus {\begin{smallmatrix}
2&\ \\
&3
\end{smallmatrix}},{\begin{smallmatrix}
3&\ \\
&5
\end{smallmatrix}})& \longmapsto&  {\begin{smallmatrix}
&2&\ \\
3&&4
\end{smallmatrix}} \oplus {\begin{smallmatrix}
2&\ \\
&3
\end{smallmatrix}} \oplus {\begin{smallmatrix}
2&\ \\
&4
\end{smallmatrix}}  \nonumber\\[1mm]
({\begin{smallmatrix}
\ &3&\
\end{smallmatrix}} \oplus {\begin{smallmatrix}
2&\ \\
&3
\end{smallmatrix}}\oplus {\begin{smallmatrix}
\ &2&\
\end{smallmatrix}},0 \text{ })& \longmapsto&  {\begin{smallmatrix}
\ &3&\
\end{smallmatrix}} \oplus {\begin{smallmatrix}
2&\ \\
&3
\end{smallmatrix}}\oplus {\begin{smallmatrix}
\ &2&\
\end{smallmatrix}}\nonumber\\[1mm]
({\begin{smallmatrix}
\ &2&\
\end{smallmatrix}}, {\begin{smallmatrix}
3&&\ \\
&5&
\end{smallmatrix}} \oplus {\begin{smallmatrix}
\ &3&\
\end{smallmatrix}}) &\longmapsto & {\begin{smallmatrix}
2&&\\
&4&
\end{smallmatrix}} \oplus {\begin{smallmatrix}
\ &2&\
\end{smallmatrix}} \oplus {\begin{smallmatrix}
1&&\\
&2&
\end{smallmatrix}}\nonumber=M \\[1mm]
({\begin{smallmatrix}
3&&\ \\
&5&
\end{smallmatrix}}, {\begin{smallmatrix}
\ &3&\
\end{smallmatrix}} \oplus {\begin{smallmatrix}
2&\ \\
&3
\end{smallmatrix}})&\longmapsto & {\begin{smallmatrix}
3&&\ \\
&5&
\end{smallmatrix}} \oplus {\begin{smallmatrix}
1&&\ \\
&2&
\end{smallmatrix}} \oplus {\begin{smallmatrix}
\ &1&\
\end{smallmatrix}}\nonumber \\[1mm]
({\begin{smallmatrix}
3&\ \\
&5
\end{smallmatrix}}\oplus {\begin{smallmatrix}
\ &2&\
\end{smallmatrix}}, {\begin{smallmatrix}
\ &3&\
\end{smallmatrix}}) &\longmapsto &{\begin{smallmatrix}
3&\ \\
&5
\end{smallmatrix}}\oplus {\begin{smallmatrix}
\ &2&\
\end{smallmatrix}} \oplus {\begin{smallmatrix}
1&\ \\
&2
\end{smallmatrix}}\nonumber \\[1mm]
 ({\begin{smallmatrix}
3&\ \\
&5
\end{smallmatrix}} \oplus {\begin{smallmatrix}
\ &3&\
\end{smallmatrix}}, {\begin{smallmatrix}
2&\ \\
&3
\end{smallmatrix}})&\longmapsto &{\begin{smallmatrix}
3&\ \\
&5
\end{smallmatrix}} \oplus {\begin{smallmatrix}
\ &3&\
\end{smallmatrix}} \oplus {\begin{smallmatrix}
\ &1&\
\end{smallmatrix}}\nonumber \\[1mm]
({\begin{smallmatrix}
3&\ \\
&5
\end{smallmatrix}} \oplus {\begin{smallmatrix}
2&\ \\
&3
\end{smallmatrix}}\oplus {\begin{smallmatrix}
\ &2&\
\end{smallmatrix}},0 \text{ }) &\longmapsto & {\begin{smallmatrix}
3&\ \\
&5
\end{smallmatrix}} \oplus {\begin{smallmatrix}
2&\ \\
&3
\end{smallmatrix}}\oplus {\begin{smallmatrix}
\ &2&\
\end{smallmatrix}}\nonumber \\[1mm]
({\begin{smallmatrix}
3&\ \\
&5
\end{smallmatrix}} \oplus {\begin{smallmatrix}
\ &3&\
\end{smallmatrix}} \oplus {\begin{smallmatrix}
2&\ \\
&3
\end{smallmatrix}},0 \text{ })&\longmapsto &{\begin{smallmatrix}
3&\ \\
&5
\end{smallmatrix}} \oplus {\begin{smallmatrix}
\ &3&\
\end{smallmatrix}} \oplus {\begin{smallmatrix}
2&\ \\
&3
\end{smallmatrix}}
\end{eqnarray*}
\medskip

Let $\Lambda \oplus {\begin{smallmatrix}
\ &2&\
\end{smallmatrix}} \oplus {\begin{smallmatrix}
2&\ \\
&4
\end{smallmatrix}}=U$. Then we can find that ${\begin{smallmatrix}
2&\ \\
&3
\end{smallmatrix}} \notin U*[R]$. According to Theorem \ref{main2}, we can find another complement in the following short exact sequence $${\begin{smallmatrix}
2&\ \\
&3
\end{smallmatrix}} \to {\begin{smallmatrix}
1&&\\
&2&\\
&&3
\end{smallmatrix}} \oplus {\begin{smallmatrix}
\ &2&\
\end{smallmatrix}} \to {\begin{smallmatrix}
1&\ \\
&2
\end{smallmatrix}}.$$ We also have $U\oplus {\begin{smallmatrix}
1&\ \\
&2
\end{smallmatrix}}=\Lambda \oplus M< U \oplus {\begin{smallmatrix}
2&\ \\
&3
\end{smallmatrix}}=\Lambda \oplus N.$
\end{exm}

\end{document}